\begin{document}
\theoremstyle{plain}
\newtheorem{MainThm}{Theorem}
\newtheorem{thm}{Theorem}[section]
\newtheorem{clry}[thm]{Corollary}
\newtheorem{prop}[thm]{Proposition}
\newtheorem{lemma}[thm]{Lemma}
\newtheorem{deft}[thm]{Definition}
\newtheorem{hyp}{Assumption}
\newtheorem*{conjecture}{Conjecture}
\newtheorem{question}{Question}

\newtheorem{claim}[thm]{Claim}

\theoremstyle{definition}
\newtheorem*{definition}{Definition}
\newtheorem{assumption}{Assumption}
\newtheorem{rem}[thm]{Remark}
\newtheorem*{remark}{Remark}
\newtheorem*{acknow}{Acknowledgements}

\newtheorem{example}[thm]{Example}
\newtheorem*{examplenonum}{Example}
\numberwithin{equation}{section}
\newcommand{\nocontentsline}[3]{}
\newcommand{\tocless}[2]{\bgroup\let\addcontentsline=\nocontentsline#1{#2}\egroup}
\newcommand{\eps}{\varepsilon}
\renewcommand{\phi}{\varphi}
\renewcommand{\d}{\partial}
\newcommand{\re}{\mathop{\rm Re} }
\newcommand{\im}{\mathop{\rm Im}}
\newcommand{\mR}{\mathbb{R}}
\newcommand{\mC}{\mathbb{C}}
\newcommand{\mN}{\mathbb{N}} 
\newcommand{\mZ}{\mathbb{Z}} 
\newcommand{\mK}{\mathbb{K}}
\newcommand{\supp}{\mathop{\rm supp}}
\newcommand{\abs}[1]{\lvert #1 \rvert}
\newcommand{\norm}[1]{\lVert #1 \rVert}
\newcommand{\csubset}{\Subset}
\newcommand{\detg}{\lvert g \rvert}
\newcommand{\msetminus}{\setminus}

\newcommand{\mF}{\mathscr{F}}

\newcommand{\br}[1]{\langle #1 \rangle}

\newcommand{\ehat}{\,\hat{\rule{0pt}{6pt}}\,}
\newcommand{\echeck}{\,\check{\rule{0pt}{6pt}}\,}
\newcommand{\etilde}{\,\tilde{\rule{0pt}{6pt}}\,}

\newcommand{\tr}{\mathrm{tr}}
\newcommand{\mdiv}{\mathrm{div}}

\newcommand{\mE}{\mathbb{E}}

\newcommand{\sidenote}[1]{% \sidenote{<side note>}
  \refstepcounter{sidenote}\mbox{\textsuperscript{\alph{sidenote}}}%
  \marginpar{\bf \footnotesize\raggedright\strut\mbox{\textsuperscript{\alph{sidenote}} }#1}%
}
\newcounter{sidenote}
\setlength{\marginparwidth}{.8in}

\title[Strictly convex corners scatter]{Strictly convex corners scatter}

\author{Lassi P\"aiv\"arinta}
\address{Department of Mathematics and Statistics, University of Helsinki}
\email{lassi.paivarinta@helsinki.fi}

\author{Mikko Salo}
\address{Department of Mathematics and Statistics, University of Jyv\"askyl\"a}
\email{mikko.j.salo@jyu.fi}

\author{Esa V. Vesalainen}
\address{Department of Mathematics and Statistics, University of Helsinki}
\email{esa.vesalainen@helsinki.fi}

%\date{February 8, 2013}

\begin{abstract}
We prove the absence of non-scattering energies for potentials in the plane having a corner of angle smaller than $\pi$. This extends the earlier result of Bl{\aa}sten, P\"aiv\"arinta and Sylvester who considered rectangular corners. In three dimensions, we prove a similar result for any potential with a circular conic corner whose opening angle is outside a countable subset of $(0,\pi)$.
\end{abstract}

\maketitle

%\tableofcontents

\section{Introduction} \label{sec_intro}

\subsection{Background}

This article is concerned with \emph{non-scattering energies}. These are energies $\lambda > 0$ for which there exists a nontrivial incident wave that does not scatter (equivalently, the far field operator at energy $\lambda > 0$ has nontrivial kernel).

Certain reconstruction methods in inverse scattering theory, such as the linear sampling method \cite{Colton--Kirsch} or the factorization method \cite{Kirsch_factorization}, may fail at non-scattering energies and therefore these energies are to be avoided. This has led people to study the usually larger class of interior transmission eigenvalues which first appeared in \cite{Colton--Monk, Kirsch}. For acoustic scattering, the transmission eigenvalues often form an infinite discrete set \cite{Paivarinta--Sylvester, Cakoni--Gintides--Haddar}, and in recent years they have been studied intensively. For more information about transmission eigenvalues, we recommend the survey \cite{Cakoni--Haddar} as well as the articles mentioned in the recent editorial \cite{Cakoni--Haddar2} and their references.

Results on non-scattering energies appear to be scarce, apart from discreteness results which follow from corresponding results for transmission eigenvalues. For radial compactly supported potentials, the set of non-scattering energies coincides with the infinite discrete set of transmission eigenvalues \cite{Colton--Monk}. Non-scattering energies are exactly those energies for which the scattering matrix has $1$ as an eigenvalue, and \cite{GH} constructs $C^{\infty}_c$ potentials with no non-scattering energies.

It is relevant to mention here the related topic of transparent potentials. These are nonzero potentials whose far field operator is identically zero at some fixed energy $\lambda$, and thus no incident wave with energy $\lambda$ scatters. %The knowledge of transparent potentials is more extensive than that of non-scattering energies. In particular, 
Several constructions of such radial potentials have been given, see e.g.\ the works \cite{Regge, newton, sabatier, Grinevich--Manakov, GrinNov}.
%the works of Regge \cite{Regge}, Newton \cite{newton}, Sabatier \cite{sabatier}, Grinevich and Manakov \cite{Grinevich--Manakov}, and Grinevich and Novikov \cite{GrinNov}.

The recent work \cite{BPS} suggests that corner points in the scattering potential always generate a scattered wave. More precisely, \cite{BPS} proves the absence of non-scattering energies in acoustic scattering for certain contrasts having corners with a right angle, i.e.~for contrasts supported in a rectangle $K$ that do not vanish at a corner point of $K$. In this paper we extend this result to corners with arbitrary opening angle $< \pi$ in two dimensions. In three dimensions, we prove that circular conic corner points with angle outside an at most countable subset of $(0,\pi)$ lead to absence of non-scattering energies. By the ``opening angle'' of a cone
\[\bigl\{ (x',x_n) \in \mR^{n-1}\times\mathbb R \bigm| \abs{x'}\leq cx_n \bigr\},\]
say, where $c\in\mathbb R_+$, we mean the angle $\vartheta\in\left(0,\pi\right)$ such that $\tan(\vartheta/2)=c$.

\subsection{Non-scattering energies}
Let us state the precise definition of non-scattering energies. This notion makes sense in the context of short range scattering theory in $\mR^n$, and we will formulate our results using the notation of quantum mechanical scattering following \cite[Chapter XIV]{H2}. We will discuss later the analogous case of acoustic scattering, where the term \emph{non-scattering wavenumbers} is more appropriate.

Let $V$ be a short range potential, which in this paper will mean that $V$ is in $L^{\infty}(\mR^n)$ and there are $C > 0$, $\eps > 0$ such that 
$$
\abs{V(x)} \leq C \br{x}^{-1-\eps} \quad \text{a.e.~in $\mR^n$.}
$$
Here we write $\br{x} = (1+\abs{x}^2)^{1/2}$. 
For any $\lambda > 0$ and $g \in L^2(S^{n-1})$, we consider the \emph{incident wave} 
\begin{equation} \label{intro_u0_def}
u_0(x) = \int_{S^{n-1}} e^{i \sqrt{\lambda} x \cdot \omega} g(\omega) \,d\omega,
\end{equation}
which solves the free Schr\"odinger equation $(-\Delta-\lambda) u_0 = 0$ in $\mR^n$. Corresponding to the incident wave $u_0$, there is a unique solution of the perturbed Schr\"odinger equation 
$$
(-\Delta + V - \lambda) u = 0 \quad \text{in $\mR^n$}
$$
having the form 
$$
u = u_0 + v,
$$
where $v$ satisfies the \emph{outgoing radiation condition}. There are many equivalent formulations of this condition that selects the unique outgoing solution of the Schr\"odinger equation: we follow \cite{H2} and say that $v$ satisfies the outgoing radiation condition if $v = (-\Delta-\lambda-i0)^{-1} f$ for some $f$ in the Agmon--H\"ormander space $B$ (see Section \ref{sec_orthogonality_identity} for the precise definitions). The function $v$ is called the \emph{outgoing scattered wave} corresponding to $u_0$, and $u$ is called the \emph{total wave}.

If $u_0$ corresponds to $g \in L^2(S^{n-1})$ as above and if $x = r\theta$ where $\theta \in S^{n-1}$, then $u_0$ has the following asymptotics as $r \to \infty$:
$$
u_0(r\theta) \sim c r^{-\frac{n-1}{2}} (e^{i\sqrt{\lambda} r} g(\theta) + i^{n-1} e^{-i\sqrt{\lambda} r} g(-\theta)).
$$
The scattered wave $v$ has the asymptotics 
$$
v(r\theta) \sim c r^{-\frac{n-1}{2}} e^{i\sqrt{\lambda} r} A_{\lambda} g(\theta)
$$
where $A_{\lambda}$ is the \emph{far field operator}, which is a bounded linear operator  
$$
A_{\lambda}: L^2(S^{n-1}) \to L^2(S^{n-1}).
$$
The function $A_{\lambda} g$ is called the \emph{far field pattern} of the scattered wave $v$. If the far field pattern vanishes and additionally $V$ is compactly supported, the Rellich uniqueness theorem (see \cite{Vesalainen_rellich} for references) implies that also the scattered wave $v$ must be compactly supported. Thus the vanishing of the far field pattern may be interpreted so that the incident wave $u_0$ does not produce any scattered wave at infinity.

We may then divide all energies $\lambda > 0$ in two classes: those for which all nontrivial incident waves scatter, and those for which there exist nontrivial incident waves that cannot be observed at infinity. The latter case is the case of non-scattering energies:

\begin{definition}
Let $V$ be a short range potential in $\mR^n$. We say that $\lambda > 0$ is a non-scattering energy for the potential $V$, if there exists a nonzero $g \in L^2(S^{n-1})$ for which $A_{\lambda} g = 0$.
\end{definition}

\subsection{Main results}
Our argument for the absence of non-scattering energies is based on suitable complex geometrical optics solutions to the Schr\"odinger equation. Since these solutions grow exponentially at infinity, it will be natural to assume that the potentials satisfy a corresponding decay condition.

\begin{definition}
$V \in L^{\infty}(\mR^n)$ is called \emph{superexponentially decaying} if for any $\gamma > 0$ there is $C_{\gamma} > 0$ such that $\abs{V(x)} \leq C_{\gamma} e^{-\gamma \abs{x}}$ a.e.~in $\mR^n$.
\end{definition}

The main results of this paper are as follows. Below we will write $\chi_E$ for the characteristic function of a set $E$ and $C^s(\mR^n)$ for the H\"older spaces with norm (for $0 < s < 1$) 
$$
\norm{f}_{C^s} = \norm{f}_{L^{\infty}} + \sup_{x \neq y} \frac{\abs{f(x)-f(y)}}{\abs{x-y}^s}.
$$

\begin{thm} \label{thm_mainthm1}
Let $V(x) = \chi_C(x) \varphi(x)$ where $C \subset \mR^2$ is a closed strictly convex cone with its vertex at the origin, and $\varphi$ is a superexponentially decaying function in $\mR^2$ such that $\br{x}^{\alpha} \varphi \in C^s(\mR^2)$ for some $\alpha > 5/3$ and $s > 0$. Let also $\varphi(0) \neq 0$. Then there are no non-scattering energies for the potential $V$.
\end{thm}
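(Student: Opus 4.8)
The plan is to argue by contradiction: assume $\lambda>0$ is a non-scattering energy for $V=\chi_C\varphi$ and produce a nontrivial solution that is forced to vanish. Let $u_0$ be a nonzero incident wave with $A_\lambda g=0$, let $v$ be the scattered wave and $u_1=u_0+v$ the total wave, so $(-\Delta+V-\lambda)u_1=0$ in $\mR^2$. Since $V$ decays superexponentially and the far field of $v$ vanishes, a Rellich-type argument (cf.\ \cite{Vesalainen_rellich}) shows that $v$, hence $Vu_1$, decays superexponentially, so $\widehat{Vu_1}$ extends to an entire function on $\mC^2$. Vanishing of the far field pattern forces $\widehat{Vu_1}=0$ on $\{\xi\in\mR^2:\abs{\xi}^2=\lambda\}$; as this real sphere is a maximal totally real submanifold of the complex cone $\{\zeta\in\mC^2:\zeta\cdot\zeta=\lambda\}$, the entire function $\widehat{Vu_1}$ must vanish on the whole cone. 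Equivalently one obtains the \emph{orthogonality identity}
\[
\int_{C}\varphi(x)\,u_1(x)\,e^{\tau\cdot x}\,dx=0\qquad\text{whenever }\tau\in\mC^2,\ \tau\cdot\tau=-\lambda .
\]
Here $u_1\not\equiv0$ (its leading behaviour at infinity is the nonzero incident wave), $u_1$ is polynomially bounded and $u_1\in C^{1,\beta}_{\mathrm{loc}}$; moreover $u_1$ is real analytic off $C$ and $C^{2,s}$ up to $\partial C$ from inside, so it has an asymptotic expansion at the vertex of $C$ into homogeneous terms, the leading part at each stage being \emph{harmonic} (since at the corner scale $-\Delta$ dominates $V-\lambda$).

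Next I choose the exponents so that $e^{\tau\cdot x}$ localises at the vertex. For $e\in S^1$ and $\rho>0$ set $\tau^{\pm}=\tau^{\pm}(e,\rho)=\rho e\pm i\sqrt{\rho^2+\lambda}\,e^{\perp}$; then $\tau^{\pm}\cdot\tau^{\pm}=-\lambda$ and $\abs{e^{\tau^{\pm}\cdot x}}=e^{\rho\,e\cdot x}$. Since $C$ is a closed strictly convex cone with vertex at $0$ and opening angle $\vartheta<\pi$, there is an open arc $\mathcal E\subset S^1$ of width $\pi-\vartheta$ with $e\cdot x\le -c_e\abs{x}$ on $C$ for $e\in\mathcal E$, so $\abs{e^{\tau^{\pm}\cdot x}}\le e^{-c_e\rho\abs{x}}$ on $C$ and the mass of $e^{\tau^{\pm}\cdot x}$ over $C$ concentrates at $0$ as $\rho\to\infty$.

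The heart of the matter is to insert these solutions into the identity and read off the leading asymptotics. The part over $\{\abs{x}>\delta\}$ is $O(e^{-c_e\rho\delta})$ by superexponential decay of $\varphi$ and the polynomial bound on $u_1$; on $\{\abs{x}<\delta\}$ one replaces $\varphi u_1$ by its leading homogeneous part and controls the error by Hölder continuity and the corner expansion. The key quantities are the \emph{cone Laplace transforms} $\int_C z^{k}e^{\tau\cdot x}\,dx$ and $\int_C\bar z^{k}e^{\tau\cdot x}\,dx$; writing $C=\{r\omega:r\ge0,\ \omega\in\Gamma\}$ with $\Gamma$ an arc of length $\vartheta$, using $\int_0^\infty r^{k+1}e^{r\zeta}\,dr=(k+1)!\,(-\zeta)^{-k-2}$ and the identity $(e\pm ie^{\perp})\cdot\omega=e^{\pm i\psi}$ when $\omega$ makes angle $\psi$ with $e$, a direct computation gives, to leading order in $\rho$, magnitudes proportional to $\abs{\sin\vartheta}$ and to $\abs{\sin((k+1)\vartheta)}/(k+1)$, with $\psi$-dependence $e^{\pm i(k+2)\psi}$. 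In particular, for $k=0$ one gets $\bigl\lvert\int_C e^{\tau\cdot x}\,dx\bigr\rvert\gtrsim\rho^{-2}$ uniformly for $e\in\mathcal E$ — this is where $\sin\vartheta>0$, i.e.\ $\vartheta<\pi$, is used — and dividing by it and letting $\rho\to\infty$ forces $\varphi(0)u_1(0)=0$, hence $u_1(0)=0$.

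Finally one iterates. If $u_1\not\equiv0$, it vanishes at $0$ to some finite order $k$, its leading homogeneous profile $u_1^{(k)}=a_kz^{k}+b_k\bar z^{k}$ is harmonic and nonzero, and the previous step applied with both signs $\tau^{\pm}$ yields two linear relations among $a_k,b_k$; eliminating, the system is nonsingular exactly because $(k+1)\abs{\sin\vartheta}>\abs{\sin((k+1)\vartheta)}$ for all $\vartheta\in(0,\pi)$ and $k\ge1$ (an elementary trigonometric inequality), so $a_k=b_k=0$, a contradiction. Hence $u_1$ vanishes at $0$ to infinite order, and strong unique continuation for $-\Delta+V-\lambda$ (with $V\in L^\infty$) followed by unique continuation in $\mR^2$ gives $u_1\equiv0$; then $v=-u_0$ solves the free equation, so $v=0$ and $u_0=0$, contradicting $u_0\neq0$. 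This proves the theorem. The main obstacle is the analysis of the cone Laplace transforms for a general opening angle — unlike the right angle of \cite{BPS}, the integral does not factor — and, above all, making the asymptotic expansion of $u_1$ at the conic point precise enough, with uniform remainder control over the family $\{\tau^{\pm}(e,\rho)\}$, to isolate each leading term despite $\varphi$ being only Hölder continuous; it is in these remainder estimates and in the validity of the orthogonality identity that the hypotheses $\br{x}^{\alpha}\varphi\in C^s$, $\alpha>5/3$, $s>0$, are used.
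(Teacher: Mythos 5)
Your overall strategy inverts the roles played in the paper: the paper pairs complex geometrical optics solutions $u=e^{-\rho\cdot x}(1+\psi)$ of the \emph{perturbed} equation (built from the Kenig--Ruiz--Sogge estimates, which is exactly where the hypotheses $\alpha>5/3$, $s>0$ are consumed) against the \emph{free} wave $v_0$, which is real-analytic, so its lowest-order Taylor block $H$ is automatically a harmonic homogeneous polynomial and the whole argument reduces to showing $\int_C e^{-\rho\cdot x}H\,dx=0$ forces $H\equiv 0$. You instead pair the exact free exponentials $e^{\tau\cdot x}$, $\tau\cdot\tau=-\lambda$, against the \emph{total} wave $u_1$; your orthogonality identity (entire extension of $\widehat{Vu_1}$ plus vanishing on the sphere, hence on the complex quadric) is fine, and your final trigonometric determinant condition coincides with the paper's $\sin L\neq\pm\frac{1}{N+1}\sin((N+1)L)$. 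The genuine gap is the assertion that $u_1$ ``has an asymptotic expansion at the vertex of $C$ into homogeneous terms, the leading part at each stage being harmonic,'' together with the implicit finite-order/infinite-order vanishing dichotomy on which your iteration rests. Since $V=\chi_C\varphi$ is discontinuous at the vertex, $u_1$ is only $C^{1,\beta}_{\mathrm{loc}}$ there; beyond degree one there is no classical Taylor expansion, and the statement ``if $u_1=O(|x|^k)$ then $u_1=a_kz^k+b_k\bar z^k+o(|x|^k)$ with harmonic leading part, uniformly enough to survive division by the cone Laplace transform as $\rho\to\infty$'' is a nontrivial elliptic lemma (it would require, e.g., an inductive Newtonian-potential decomposition with quantitative remainders), not a consequence of interior regularity. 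You flag this yourself as ``the main obstacle,'' but the proposal neither proves it nor substitutes anything for it, and it is precisely the engine of your induction and of the appeal to strong unique continuation at the end.

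Relatedly, your sketch never actually uses the hypotheses $\alpha>5/3$ and $\langle x\rangle^{\alpha}\varphi\in C^s$ in the way the theorem is calibrated to need them: in the paper they enter through the Sobolev multiplier property of $V$ needed for the $L^q$ control of the CGO remainder $\psi$, whereas in your scheme there is no $\psi$ and the burden shifts entirely onto the unproven corner expansion of $u_1$ with uniform remainder control over the family $\tau^{\pm}(e,\rho)$. If you want to pursue your route (which is closer to later ``vanishing of the total wave at corners'' arguments than to this paper), the missing lemma on the expansion of $u_1$ at the conic point must be stated and proved; as written, the argument has a gap at its central step. A smaller point: the detour through a Rellich-type superexponential decay of $v$ is unnecessary, since $Vu_1$ already decays superexponentially because $V$ does and $u_1\in B^*$ is polynomially bounded.
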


\begin{thm} \label{thm_mainthm2}
Let $V(x) = \chi_C(x) \varphi(x)$ where $C \subset \mR^3$ is a closed circular cone with opening angle $\gamma\in(0,\pi)$ having its vertex at the origin, and $\varphi$ is a superexponentially decaying function in $\mR^3$ such that $\br{x}^{\alpha} \varphi \in C^s(\mR^3)$ for some $\alpha > 9/4$ and $s > 1/4$. Let also $\varphi(0) \neq 0$.

There exists an at most countable subset $E \subset (0,\pi)$ such that if $V$ is as above and if $\gamma \in (0,\pi) \setminus E$, then there are no non-scattering energies for the potential $V$.
\end{thm}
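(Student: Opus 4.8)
The plan is to argue by contradiction. Suppose $\lambda>0$ is a non-scattering energy for $V=\chi_C\varphi$ in $\mR^3$: there is a nonzero $g\in L^2(S^2)$ giving the incident Herglotz wave $u_0$ of \eqref{intro_u0_def}, the total wave $u=u_0+v$, and the outgoing scattered wave $v$, with vanishing far field pattern. Since $V$ is superexponentially decaying, the Rellich-type argument underlying Section~\ref{sec_orthogonality_identity} forces $v$ itself to be superexponentially decaying, so the orthogonality identity remains valid when tested against exponentially growing free solutions: $\int_{\mR^3}Vuw\,dx=0$, that is,
\[
\int_C\varphi\,u\,w\,dx=0,
\]
for every $w$ solving $(-\Delta-\lambda)w=0$ in $\mR^3$ of at most exponential growth. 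Note also that $u$ is bounded and, since $V\in L^\infty$, belongs to $C^{1,\beta}_{\mathrm{loc}}$ for every $\beta<1$.

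Into this identity I would insert complex geometrical optics solutions concentrating at the vertex of $C$. Because the circular cone $C$ of opening angle $\gamma\in(0,\pi)$ lies in an open half space, there is a unit vector $e$ (one may take the axis) with $e\cdot x\geq c\abs{x}$ for some $c>0$ and all $x\in C$. Fixing also a unit vector $f\perp e$ and setting $\rho_\tau=-\tau e+i\sqrt{\tau^2+\lambda}\,f$, we have $\rho_\tau\cdot\rho_\tau=-\lambda$, so $w_\tau(x)=e^{\rho_\tau\cdot x}$ solves $(-\Delta-\lambda)w_\tau=0$ exactly, with $\abs{w_\tau(x)}=e^{-\tau(e\cdot x)}$ decaying exponentially on $C$; no correction term is needed. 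Substituting $w=w_\tau$, rescaling $x\mapsto y/\tau$, and using $\rho_\tau/\tau\to\zeta:=-e+if$ (which satisfies $\zeta\cdot\zeta=0$ and $\re(\zeta\cdot y)=-(e\cdot y)\leq-c\abs{y}$ on $C$), one rewrites
\[
0=\tau^{3}\int_C\varphi\,u\,w_\tau\,dx=\int_C\varphi(y/\tau)\,u(y/\tau)\,e^{(\rho_\tau/\tau)\cdot y}\,dy
\]
and lets $\tau\to\infty$. By dominated convergence — using continuity of $\varphi$ and $u$ at the origin, the uniform bound $\abs{e^{(\rho_\tau/\tau)\cdot y}}\leq e^{-c\abs{y}}$ on $C$, and the quantitative Hölder/decay hypotheses on $\varphi$ (here the thresholds on $\alpha$ and $s$ enter) — the right-hand side tends to $\varphi(0)\,u(0)\,\mathcal J(\zeta)$, where $\mathcal J(\zeta):=\int_C e^{\zeta\cdot y}\,dy$. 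Since $\mathcal J$ is holomorphic on the relevant tube domain and, for $e$ the axis, the admissible value $\mathcal J(-e+if)=\int_C e^{-(e\cdot y)}e^{i(f\cdot y)}\,dy$ is a positive real number by a Bessel-function computation, there is an admissible $\zeta$ with $\mathcal J(\zeta)\neq0$; as $\varphi(0)\neq0$ this forces $u(0)=0$, with no restriction on $\gamma$ so far.

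To conclude I would upgrade this to vanishing of $u$ to infinite order at the vertex. If $u$ did not vanish identically near the origin, then, since $-\Delta u=(\lambda-V)u$ with $\lambda-V\in L^\infty$, strong unique continuation would give that $u$ has a finite order of vanishing $N\geq1$ there with leading part a nonzero homogeneous harmonic polynomial $P_N$ of degree $N$; inserting $u=P_N+(\text{lower order})$ into the rescaled identity at order $\tau^{-3-N}$ and using $\int_C P_N(y)e^{\zeta\cdot y}\,dy=[P_N(\nabla_\zeta)\mathcal J](\zeta)$ would give
\[
\varphi(0)\,[P_N(\nabla_\zeta)\mathcal J](\zeta)=0
\]
for every admissible $\zeta$. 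The rotational symmetry of $C$ about its axis makes $\mathcal J$ depend on $\zeta$ only through $\zeta_3$ and $\zeta_1^2+\zeta_2^2$, hence, on the quadric $\{\zeta\cdot\zeta=0\}$, an explicit function of one complex variable expressible in closed form (in terms of Legendre or hypergeometric functions of $\cos\gamma$); decomposing $P_N$ into its isotypic pieces $P_N^{(m)}$ under rotations about the axis and inserting this explicit form, one finds that each $P_N^{(m)}$ can be nonzero while satisfying the displayed relation only if a transcendental equation $h_{N,m}(\gamma)=0$ holds — an equation depending only on the cone, not on $V$ or $\lambda$. Letting $E\subset(0,\pi)$ be the at most countable union of the zero sets of all the $h_{N,m}$, we conclude: for $\gamma\notin E$ every $P_N$ vanishes, so $u$ vanishes to infinite order at the origin, hence $u\equiv0$ near the origin and then $u\equiv0$ in $\mR^3$ by unique continuation. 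But then $u_0=-v$ is superexponentially decaying, impossible for a nonzero Herglotz wave since the far field asymptotics of $u_0$ recover $g\neq0$. This contradiction proves the theorem.

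The step I expect to be the main obstacle is the last one: producing a sufficiently explicit description of $\mathcal J$ and its higher $\zeta$-derivatives over a circular cone, identifying the transcendental equations $h_{N,m}(\gamma)=0$, and verifying that they are not identically satisfied — the genuinely new difficulty relative to \cite{BPS}, where for a rectangular (product) corner the analogous integrals factor into elementary non-vanishing expressions, and relative to the planar Theorem~\ref{thm_mainthm1}, where for a sector of angle $\theta<\pi$ everything is explicit and the non-vanishing reduces to the elementary inequality $\abs{\sin((N+1)\theta)}<(N+1)\abs{\sin\theta}$. The secondary difficulty is making the corner localization and the order-of-vanishing induction quantitative under only the weighted Hölder hypothesis on $\varphi$, which is what forces the precise ranges $\alpha>9/4$, $s>1/4$ (respectively $\alpha>5/3$, $s>0$ in the plane).
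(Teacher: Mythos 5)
Your reduction is a genuinely different route from the paper's: you keep the total wave $u$ fixed and test against exponentially growing solutions of the \emph{free} equation (pure exponentials $e^{\rho\cdot x}$ with $\rho\cdot\rho=-\lambda$), whereas the paper fixes the non-scattering Herglotz wave $v_0$ and tests against CGO solutions of the \emph{perturbed} equation, built in Section \ref{CGO-section} from the Kenig--Ruiz--Sogge estimates (this is exactly where $\alpha>9/4$, $s>1/4$ and the restriction $n\leq 3$ enter). Your swap is legitimate in principle: the identity $\int_{\mR^3}Vuw\,dx=0$ for all exponentially growing free $w$ does follow from the boundary pairing, unitarity of $S_V(\lambda)$, $A_V(\lambda)g=0$, and density of Herglotz waves in exponentially weighted $L^2$ (the $V=0$ case of Proposition \ref{prop:density}); your stated justification via ``Rellich forces $v$ to decay superexponentially'' is asserted rather than proved and should be replaced by that argument. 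The price of your route is that the object expanded at the vertex is the non-analytic perturbed solution $u$ rather than the real-analytic $v_0$, so you need a Bers-type expansion $u=P_N+O(\abs{x}^{N+\epsilon})$ with $P_N$ harmonic for solutions of $\Delta u=(V-\lambda)u$ with $V\in L^\infty$, plus strong unique continuation and an infinite-order-vanishing induction; these are citable but not addressed, whereas the paper gets harmonicity of the leading term and the contradiction for free from analyticity of $v_0$ (Lemma \ref{lemma_h_harmonic}).

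The genuine gap, however, is at the decisive step, which you explicitly defer: both your route and the paper's funnel into the same statement, namely that the vanishing of $\int_C P(y)e^{\zeta\cdot y}\,dy$ for the admissible null vectors $\zeta$ forces the harmonic homogeneous polynomial $P$ to vanish, for all opening angles outside a countable set. You reduce this to ``transcendental equations $h_{N,m}(\gamma)=0$'' and then take $E$ to be the union of their zero sets, but countability of that union requires knowing that each $h_{N,m}$ is \emph{not identically zero} as a function of $\gamma$ --- if some $h_{N,m}\equiv 0$, its zero set is all of $(0,\pi)$ and the theorem fails as stated. Establishing this non-degeneracy is precisely the content of Lemmas \ref{lemma_sphericalharmonic_condition} and \ref{lemma_notidenticallyzero}: the paper expands in the basis $P_N^{\abs{j}}(\cos\alpha)e^{ij\beta}$, arranges a Vandermonde determinant over rotations about the axis, and then shows each factor $f_j(\gamma)$ is not identically zero by computing the $\abs{j}$-th derivative of the inner $\beta$-integral at $\gamma=0$ and exhibiting a single resonant term with nonzero coefficient, after which analyticity in $\gamma$ near $[0,\pi/2)$ yields a countable exceptional set. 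Without an argument of this kind your proposal proves nothing about $E$; everything else in your outline is plumbing around this missing core, so as it stands the proof is incomplete at exactly the point where the three-dimensional theorem lives.
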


\begin{remark}
The technical conditions for $\varphi$ in the above theorems are satisfied for instance if $\varphi$ is a compactly supported $s$-H\"older continuous function in $\mR^n$ where $s > 0$ if $n=2$ or $s > 1/4$ if $n=3$.
\end{remark}

The above theorems also imply analogous statements in acoustic scattering. In this case we consider $\mR^n$ as a medium where acoustic waves propagate, and the refractive index of the medium is assumed to be $(1+m)^{1/2}$ where the contrast $m$ satisfies the short range condition (one often writes $n^2=1+m$  where $n$ is the refractive index). Let $k > 0$ be a wavenumber, and write $\lambda=k^2$. We consider an incident wave $u_0$ as in \eqref{intro_u0_def} solving $(-\Delta-k^2) u_0 = 0$ in $\mR^n$. There is a corresponding total wave $u = u_0 + v$ that solves  
$$
(-\Delta-k^2(1+m)) u = 0 \quad \text{in } \mR^n,
$$
where the scattered wave $v$ satisfies the outgoing radiation condition.

Now, we say that $k>0$ is a non-scattering wavenumber for the contrast $m$ if there is a nontrivial incident wave $u_0$ such that the scattered wave $v$ has trivial asymptotics at infinity. The proofs of Theorems \ref{thm_mainthm1} and \ref{thm_mainthm2} apply in this situation, and we obtain corresponding results which state the absence of non-scattering wavenumbers for contrasts $m$ that satisfy exactly the same conditions as the potentials $V$.

We mention that the method of complex geometrical optics solutions that is used for proving the above theorems has a long history both in inverse scattering problems \cite{novikovkhenkin, nachman, novikov, ramm} and inverse boundary value problems \cite{calderon, SU}. See \cite{Novikov_survey, U_IP} for further references.

\subsection{Structure of argument}

We follow the approach of \cite{BPS} and argue by contradiction. Assume that $\lambda\in\mathbb R_+$ is a non-scattering energy for the potential $V$. Then, we have nontrivial solutions $w,v_0\in B_2^\ast$ to
\[(-\Delta+V-\lambda)\,w=0,\quad\text{and}\quad(-\Delta-\lambda)\,v_0=0\]
in $\mathbb R^n$, satisfying $w-v_0\in\mathring B_2^\ast$. Since $v_0$ is real analytic, the lowest degree non-vanishing terms in its Taylor series at the origin form a harmonic homogeneous polynomial $H(x)\not\equiv0$ of degree $N \geq 0$. The desired contradiction will be obtained by showing that $H(x)\equiv0$.

It is proved in Section \ref{sec_orthogonality_identity} that for non-scattering energies $\lambda$,
\[\int_{\mathbb R^n}Vuv_0=0\]
for any $u\in e^{\gamma\left\langle\cdot\right\rangle}L^2(\mathbb R^n)$ solving $(-\Delta+V-\lambda)\,u=0$, where $\gamma\in\mathbb R_+$ is arbitrary.

In Section \ref{CGO-section}, we shall discuss the existence of solutions of the form $u=e^{-\rho\cdot x}\,(1+\psi)$ to $(-\Delta+V-\lambda)\,u=0$ for $\rho\in\mathbb C^n$ with $\rho\cdot\rho=-\lambda$ and $\psi$ being well controlled as $\left|\rho\right|\longrightarrow\infty$. In Section \ref{sec_reduction_laplace} we show that substituting the complex geometrical optics to the above integral identity implies the vanishing of a certain Laplace transform. More precisely, after some estimations we see that
\[\int_Ce^{-\rho\cdot x}\,H(x)\, dx\lesssim\left|\rho\right|^{-N-2-\beta},\]
for some small $\beta\in\mathbb R_+$,
as $\left|\rho\right|\longrightarrow\infty$, and we restrict to a suitable subset of vectors $\rho\in\mathbb C^n$ with $\rho\cdot\rho=-\lambda$. On the other hand, from the homogeneity of $H(x)$, we see that
\[\int_Ce^{-\rho\cdot x}\,H(x)\, dx=\left|\rho\right|^{-N-2}\int_Ce^{-\rho/\left|\rho\right|\cdot x}\,H(x)\, dx,\]
for the same $\rho$ as before. The last two estimates turn out to be compatible only if
\[\int_Ce^{-\rho\cdot x}\,H(x)\, dx=0\]
for certain vectors $\rho\in\mathbb C^n$ with $\rho\cdot\rho=0$ (as opposed to $\rho\cdot\rho=-\lambda$). The last identity asserts the vanishing of the Laplace transform of $\chi_C H$, where $H$ is a harmonic homogeneous polynomial and $\chi_C$ is the characteristic function of the cone $C$, for certain complex vectors.

Up to this point, we have closely followed the approach of \cite{BPS}. We now depart from this approach and move to polar coordinates in the Laplace transform identity. This implies the vanishing of the following integrals over a spherical cap for certain $\rho \in \mC^n$: 
\[\int_{C\cap S^{n-1}} (\rho \cdot x)^{-N-n} \,H(x)\, dS(x)=0.\]
As a restriction of a harmonic homogeneous polynomial, $H$ can be expanded in terms of spherical harmonics of fixed degree. The main difference between our approach and \cite{BPS} is that we will perform computations in terms of these spherical harmonics.

In Section \ref{sec_two_dimensions} we discuss the case $n=2$, which is particularly simple. There $H(x)$ must be of the form $a(x+iy)^N+b(x-iy)^N$ for some constants $a,b\in\mathbb C$. When this is inserted in the above vanishing relation, the ensuing integrals can be evaluated explicitly and one obtains a concrete homogeneous linear pair of equations for $a$ and $b$. It is not difficult to prove that the corresponding determinant is nonzero, and so we conclude that $a=b=0$ and $H(x)\equiv0$ as desired.

Section \ref{sec_three_dimensions} considers the three-dimensional case. The polynomial $H(x)$ can be written as a finite linear combination of spherical harmonics, and one can again obtain a homogeneous linear system; this time the ``unknowns'' are the constant coefficients multiplied by certain concrete but complicated integrals, and the determinant of the system can be arranged to be a Vandermonde determinant. Thus, the vanishing of the coefficients of $H(x)$ is reduced to proving that all of these complicated integrals are nonzero. It is not clear to us how to do so. However, the integrals depend analytically on the opening angle, and we can prove that they are not identically zero as functions of the opening angle. In this way we get the desired contradiction when the opening angle is outside some at most countable set of exceptional angles.

We remark that there are two complications in extending the methods to dimensions $n \geq 4$. First of all, the construction of complex geometrical optics solutions is carried out by a Neumann series argument where the potential $V$ appears as a pointwise multiplier. The fact that $V$ is not very regular (it is essentially the characteristic function of a cone) implies that our construction of complex geometrical optics solutions, which is based on $L^p$ estimates from \cite{KRS} that were also used in an early version of \cite{BPS}, only gives good enough estimates when $n=2,3$. In \cite{BPS} another construction of solutions was employed, this construction works when $n \geq 2$ but seems to apply to ``polygonal'' cones instead of the circular cones that we use.  The second complication is related to the more complex structure of spherical harmonics in high dimensions, which makes the resulting integrals difficult to evaluate.

\subsection*{Acknowledgements}

All three authors were partly supported by the Academy of Finland (Finnish Centre of Excellence in Inverse Problems Research). In addition, L.P.\ was supported by an ERC Advanced Grant, M.S.\ was supported by an ERC Starting Grant, and E.V.V.\ was supported by Finland's Ministry of Education through the Doctoral Program of Inverse Problems, by the Vilho, Yrj\"o and Kalle V\"ais\"al\"a Foundation, and by the aforementioned ERC Advanced Grant. E.V.V.\ is grateful to Eemeli Bl{\aa}sten for many enlightening discussions on corner scattering and related topics.

\section{Short range scattering} \label{sec_orthogonality_identity}

In this section we recall some basic facts in short range scattering theory that are required for the setup in this paper. The following proposition is the main result in this section, and only its statement will be used in the subsequent sections.

\begin{prop} \label{prop:exponential_orthogonality}
Let $V$ be a superexponentially decaying potential. If $\lambda > 0$ is a non-scattering energy for $V$, then 
$$
\int_{\mR^n} V u v_0 \,dx = 0
$$
for some nontrivial solution $v_0 \in B^*$ of $(-\Delta-\lambda)v_0 = 0$ in $\mR^n$, and for all $u \in L^2_{loc}(\mR^n)$ such that $(-\Delta+V-\lambda)u=0$ in $\mR^n$ and $u \in e^{\gamma \br{x}} L^2(\mR^n)$ for some $\gamma > 0$.
\end{prop}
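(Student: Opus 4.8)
The plan is to extract the orthogonality relation from the vanishing of the far field pattern of the scattered wave attached to the non-scattering incident wave: once that scattered wave is known to decay superexponentially, the identity falls out of Green's formula. First I would fix the distinguished solution $v_0$. Since $\lambda$ is a non-scattering energy, there is $g\in L^2(S^{n-1})$ with $g\not\equiv 0$ and $A_{\lambda}g=0$; let $v_0$ be the Herglotz wave function \eqref{intro_u0_def} associated with $g$, which is a nontrivial element of $B^*$ solving $(-\Delta-\lambda)v_0=0$ (nontriviality because the far-field asymptotics of $u_0$ make the map $g\mapsto u_0$ injective). Let $\tilde u=v_0+\tilde v$ be the total wave corresponding to $v_0$, so that $(-\Delta+V-\lambda)\tilde u=0$, the scattered wave $\tilde v=(-\Delta-\lambda-i0)^{-1}(-V\tilde u)\in B^*$ satisfies the outgoing radiation condition, and its far field pattern vanishes because $A_{\lambda}g=0$.

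The main step is to show that $\tilde v$ decays superexponentially. Since $\tilde u=v_0+\tilde v\in B^*$ grows at most polynomially and $V$ is superexponentially decaying, the source $f:=-V\tilde u$ satisfies $e^{\gamma\br{x}}f\in L^2(\mR^n)$ for every $\gamma>0$; thus $\tilde v$ is an outgoing solution of $(-\Delta-\lambda)\tilde v=f$ with superexponentially decaying source and vanishing far field pattern. A Rellich-type argument (in the spirit of \cite{Vesalainen_rellich} and the references therein; concretely, the spherical-harmonic components of $\tilde v$ satisfy Bessel-type ODEs, and the vanishing of the far field forces each of them to be the recessive solution, which then inherits the decay of the source) should then yield $e^{\gamma\br{x}}\tilde v\in L^2(\mR^n)$ for every $\gamma>0$, and interior elliptic estimates give the same weighted bound for $\nabla\tilde v$. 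I expect this weighted Rellich estimate to be the \emph{main obstacle}; it is presumably also the reason superexponential decay of $V$ is imposed, since $\tilde v$ must decay faster than the admissible solutions $u$ are allowed to grow.

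Granting this, the conclusion is a computation. Let $u\in L^2_{loc}(\mR^n)$ solve $(-\Delta+V-\lambda)u=0$ with $u\in e^{\gamma_0\br{x}}L^2(\mR^n)$ for some $\gamma_0>0$; by the Caccioppoli inequality $\nabla u$ satisfies the same bound, $e^{-\gamma_0\br{x}}\nabla u\in L^2(\mR^n)$. From $(-\Delta-\lambda)v_0=0$ we get $(-\Delta+V-\lambda)v_0=Vv_0$, while $(-\Delta+V-\lambda)\tilde u=0$ together with $\tilde u=v_0+\tilde v$ gives $(-\Delta+V-\lambda)\tilde v=-Vv_0$; hence $Vv_0=-(-\Delta-\lambda)\tilde v-V\tilde v$, and multiplying by $u$ and integrating,
\[\int_{\mR^n}Vuv_0\,dx=-\int_{\mR^n}u\,(-\Delta-\lambda)\tilde v\,dx-\int_{\mR^n}Vu\,\tilde v\,dx.\]
Green's formula on balls $B_R$ rewrites $\int_{\mR^n}u\,(-\Delta-\lambda)\tilde v\,dx$ as $\int_{\mR^n}\bigl[(-\Delta-\lambda)u\bigr]\tilde v\,dx=-\int_{\mR^n}Vu\,\tilde v\,dx$: the boundary terms on $\partial B_R$ are bilinear in $(u,\nabla u)$ and $(\tilde v,\nabla\tilde v)$, and since $e^{\gamma\br{x}}\tilde v$ and $e^{\gamma\br{x}}\nabla\tilde v$ lie in $L^2(\mR^n)$ for every $\gamma$ whereas $e^{-\gamma_0\br{x}}u$ and $e^{-\gamma_0\br{x}}\nabla u$ lie in $L^2(\mR^n)$, the relevant products are globally integrable, so the boundary terms vanish along a suitable sequence $R_j\to\infty$. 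Substituting back, the two contributions $\pm\int_{\mR^n}Vu\,\tilde v\,dx$ cancel, giving $\int_{\mR^n}Vuv_0\,dx=0$; since the same $v_0$ works for every admissible $u$, this proves the proposition.
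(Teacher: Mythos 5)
Your overall architecture is sound and genuinely different from the paper's: you take $v_0$ to be the non-scattering Herglotz wave, pass to the associated total wave $\tilde u=v_0+\tilde v$, and integrate by parts directly against an arbitrary exponentially growing solution $u$. The endgame is fine: the identity $(-\Delta+V-\lambda)\tilde v=-Vv_0$, the Green's formula on balls with boundary terms killed along a sequence $R_j\to\infty$, and the weighted integrability bookkeeping all work \emph{provided} $\tilde v$ and $\nabla\tilde v$ really decay superexponentially. The paper never touches $\tilde v$ in this way: it proves orthogonality against the scattering solutions $P_V(\lambda)f$ via the boundary pairing and the unitarity of the scattering matrix (Propositions \ref{prop:boundary_pairing} and \ref{prop:boundary_pairing_application}), and then passes to all $u\in e^{\gamma\br{x}}L^2$ solving $(-\Delta+V-\lambda)u=0$ by citing the Uhlmann--Vasy density theorem (Proposition \ref{prop:density}).

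The genuine gap is exactly the step you flag: the claim that an outgoing solution of $(-\Delta-\lambda)\tilde v=f$ with superexponentially decaying $f$ and vanishing far field satisfies $e^{\gamma\br{x}}\tilde v\in L^2$ for every $\gamma>0$. The statement is true, but it is a theorem of essentially the same depth as the density result the paper invokes, and your sketch does not establish it. The mode-by-mode ODE picture is right for each fixed spherical-harmonic component (the outgoing coefficient is proportional to $\int_0^\infty j_\ell(\sqrt{\lambda}s)f_{\ell m}(s)s^{n-1}\,ds$, which vanishes, so the component is controlled by tails of the source), but the Bessel-function constants degrade with the degree $\ell$, and summing the series into a superexponential bound for $\tilde v$ requires uniformity in $\ell$ that the sketch does not address. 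A complete proof is really a Paley--Wiener/division argument: superexponential decay makes $\hat f$ entire; vanishing of the far field means $\hat f=0$ on the real sphere $M_\lambda$, hence on the whole complex quadric $\{\zeta\cdot\zeta=\lambda\}$ (the sphere is maximally totally real in that connected variety); since the quadric meets each strip $\{\lvert\mathrm{Im}\,\zeta\rvert\le\gamma\}$ in a compact set, one can divide by $\zeta\cdot\zeta-\lambda$ with bounds and shift contours to get $e^{x\cdot\eta}\tilde v\in L^2$ for all $\eta$. Note also that \cite{Vesalainen_rellich} concerns vanishing of scattered waves (compactly supported or suitably supported sources), not weighted decay for non-compactly-supported sources, so it does not supply the lemma as stated. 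With that lemma proved or properly cited your argument closes; as written, it replaces the paper's one black box (Proposition \ref{prop:density}) by another unproven one of comparable difficulty.
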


The results in this section are stated in terms of Agmon--H\"ormander spaces $B$ and $B^*$. The basic reference is \cite[Chapter XIV]{H2}. However, most of the next results are also contained in \cite{PSU_magnetic} (see also \cite{melrose}, \cite{uhlmannvasy}) in a convenient form. Thus the reader may refer to \cite{PSU_magnetic} for proofs and further details on the statements in this section.

\subsection{Function spaces}

The space $B$ (see \cite[Section 14.1]{H2}) is the set of those $u \in L^2(\mR^n)$ for which the norm 
\begin{equation*}
\norm{u}_B = \sum_{j=1}^{\infty} ( 2^{j-1} \int_{X_j} \abs{u}^2 \,dx )^{1/2}
\end{equation*}
is finite. Here $X_1 = \{ \abs{x} < 1 \}$ and $X_j = \{ 2^{j-2} < \abs{x} < 2^{j-1} \}$ for $j \geq 2$. This is a Banach space whose dual $B^*$ consists of all $u \in L^2_{\text{loc}}(\mR^n)$ such that 
\begin{equation*}
\norm{u}_{B^*} = \sup_{R > 1} \left[ \frac{1}{R} \int_{\abs{x} < R} \abs{u}^2 \,dx \right]^{1/2} < \infty.
\end{equation*}
The set $C^{\infty}_c(\mR^n)$ is dense in $B$ but not in $B^*$. The closure in $B^*$ is denoted by $\mathring{B}^*$, and $u \in B^*$ belongs to $\mathring{B}^*$ if and only if 
\begin{equation*}
\lim_{R \to \infty} \frac{1}{R} \int_{\abs{x} < R} \abs{u}^2 \,dx = 0.
\end{equation*}
We will also need the Sobolev space variant $B^*_2$ of $B^*$, defined via the norm 
\begin{equation*}
\norm{u}_{B^*_2} = \sum_{\abs{\alpha} \leq 2} \norm{D^{\alpha} u}_{B^*}.
\end{equation*}
%Let $L^2_{\delta}$ and $H^s_{\delta}$ be the weighted $L^2$ and Sobolev spaces in $\mR^n$ with norms 
%\begin{equation*}
%\norm{u}_{L^2_{\delta}} = \norm{\br{x}^{\delta} u}_{L^2}, \qquad \norm{u}_{H^s_{\delta}} = \norm{\br{x}^{\delta} u}_{H^s}.
%\end{equation*}
%Then one has $L^2_{\delta} \subset B \subset L^2_{1/2}$ and $L^2_{-1/2} \subset B^* \subset L^2_{-\delta}$ for any $\delta > 1/2$.

%If $X$ is a function space and $a$ is a smooth positive function, we write $aX = \{ af \,;\, f \in X \}$ and $\norm{u}_{aX} = \norm{a^{-1} u}_X$. Note that $e^{\gamma \br{x}} H^k$ has equivalent norm $u \mapsto \sum_{\abs{\alpha} \leq k} \norm{e^{-\gamma \br{x}} \partial^{\alpha} u}_{L^2}$.

If $\lambda > 0$ we will consider the sphere $M_{\lambda} = \{ \xi \in \mR^n \,;\, \abs{\xi} = \sqrt{\lambda} \}$ with Euclidean surface measure $dS_{\lambda}$. The corresponding $L^2$ space is $L^2(M_{\lambda}) = L^2(M_{\lambda}, dS_{\lambda})$, and of course $L^2(S^{n-1}) = L^2(M_1)$.

\subsection{Scattering solutions}

We consider scattering in $\mR^n$ with respect to incident waves $u_0$ with fixed energy $\lambda > 0$, where $u_0$ has the form 
$$
u_0 = P_0(\lambda) g, \qquad g \in L^2(M_{\lambda}).
$$
Here $P_0(\lambda)$ is the \emph{Poisson operator} 
$$
P_0(\lambda) g(x) = \frac{i}{(2\pi)^{n-1}} \int_{M_{\lambda}} e^{i x \cdot \xi} g(\xi) \,\frac{dS_{\lambda}(\xi)}{2\sqrt{\lambda}}, \qquad x \in \mR^n.
$$
Thus $u_0$ is a Herglotz wave corresponding to a pattern $g$ at infinity. The function $u_0$ belongs to $B_2^*$ and it satisfies the Helmholtz equation 
$$
(-\Delta-\lambda)u_0 = 0 \qquad \text{in } \mR^n.
$$

Now consider quantum scattering in $\mR^n$ where the medium properties are described by a short range potential $V$. By this we mean that $V \in L^{\infty}(\mR^n)$ is real valued, and for some $C > 0, \eps > 0$ one has 
$$
\abs{V(x)} \leq C \br{x}^{-1-\eps} \quad \text{for a.e.~$x \in \mR^n$}.
$$
The outgoing resolvent $R_V(\lambda + i0) = (-\Delta+V-\lambda-i0)^{-1}$ is well defined for all $\lambda > 0$, and it is a bounded operator 
$$
R_V(\lambda + i0): B \to B^*_2.
$$

For any incoming wave $u_0 = P_0(\lambda) g$ where $g \in L^2(M_{\lambda})$, there is a unique total wave $u$ solving the equation 
$$
(-\Delta + V - \lambda) u = 0 \qquad \text{in } \mR^n
$$
such that $u = u_0 + v$ where $v$ is \emph{outgoing} (meaning that $v = R_0(\lambda+i0)f$ for some $f \in B$). In fact, if $u_0 = P_0(\lambda) g$, then one has 
$$
u = P_V(\lambda)g
$$
where $P_V(\lambda): L^2(M_{\lambda}) \to B^*_2(\mR^n)$ is the outgoing Poisson operator 
$$
P_V(\lambda)g = P_0(\lambda)g - R_V(\lambda+i0)(V P_0(\lambda) g).
$$

\subsection{Asymptotics}

We write $u \sim u_0$ to denote that $u-u_0 \in \mathring{B}^*$, which is interpreted so that $u$ and $u_0$ have the same asymptotics at infinity. Now if $g \in L^2(M_{\lambda})$, then $P_V(\lambda)g$ has asymptotics 
\begin{equation*}% \label{perturbed_poisson_asymptotics}
P_V(\lambda) g \sim c_{\lambda} r^{-\frac{n-1}{2}} \left[ e^{i\sqrt{\lambda} r} (S_V(\lambda) g)(\sqrt{\lambda} \theta) + i^{n-1} e^{-i\sqrt{\lambda} r} g(-\sqrt{\lambda}\theta) \right]
\end{equation*}
as $r = \abs{x} \to \infty$, where $x = r \theta$ and $c_{\lambda} = (\sqrt{\lambda}/2\pi i)^{\frac{n-3}{2}} / 4\pi$. Here 
$$
S_V(\lambda): L^2(M_{\lambda}) \to L^2(M_{\lambda})
$$
is the \emph{scattering matrix} for $V$ at energy $\lambda$. It is a unitary operator, $S_V(\lambda)^* S_V(\lambda) = \text{Id}$, and if $V = 0$ one has $S_0(\lambda) = \text{Id}$. The operator 
$$
A_V(\lambda)=  S_V(\lambda) - S_0(\lambda): L^2(M_{\lambda}) \to L^2(M_{\lambda})
$$
is called the \emph{far field operator}, and $A_V(\lambda)g$ is the \emph{far field pattern} of the outgoing scattered wave $v$ at infinity.

Recall from the introduction that if $\lambda > 0$ is such that there exists a nontrivial Herglotz wave $v_0 = P_0(\lambda) g$ for which the far field pattern $A_V(\lambda)g$ is identically zero, we say that $\lambda$ is a non-scattering energy. Thus, $\lambda$ is a non-scattering energy if and only if there is a nontrivial function $g \in L^2(M_{\lambda})$ such that 
$$
(S_V(\lambda) - S_0(\lambda))g = 0.
$$

\subsection{Orthogonality identities}

We now recall the ''boundary pairing'' for scattering solutions \cite{melrose}, \cite[Proposition 2.3]{PSU_magnetic}.

\begin{prop} \label{prop:boundary_pairing}
Let $u, v \in B^*$ and $(H_0-\lambda)u \in B, (H_0-\lambda)v \in B$. If $u$ and $v$ have the asymptotics 
\begin{align*}
u &\sim r^{-\frac{n-1}{2}} \left[ e^{i\sqrt{\lambda} r} g_+(\sqrt{\lambda} \theta) + e^{-i\sqrt{\lambda} r} g_-(-\sqrt{\lambda} \theta) \right], \\
v &\sim r^{-\frac{n-1}{2}} \left[ e^{i\sqrt{\lambda} r} h_+(\sqrt{\lambda} \theta) + e^{-i\sqrt{\lambda} r} h_-(-\sqrt{\lambda} \theta) \right]
\end{align*}
for some $g_{\pm}, h_{\pm} \in L^2(M_{\lambda})$, then 
\begin{multline*}
(u|(H_0-\lambda)v)_{\mR^n} - ((H_0-\lambda)u|v)_{\mR^n} \\
 = 2i \lambda^{-\frac{n-2}{2}} \left[ (g_+|h_+)_{M_{\lambda}} - (g_-|h_-)_{M_{\lambda}} \right].
\end{multline*}
Here $(u|v)_{\mR^n} = \int_{\mR^n} u \bar{v} \,dx$ and $(g|h)_{M_{\lambda}} = \int_{M_{\lambda}} g \bar{h} \,dS_{\lambda}$.
\end{prop}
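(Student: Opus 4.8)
The statement is a Rellich--Green type boundary pairing, and the plan is to prove it along the classical route: integrate by parts on large balls, pass to the limit, and read off the surface term from the prescribed asymptotics. Throughout, $H_0=-\Delta$ and $\sqrt\lambda>0$.

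First I would set
\[
F := u\,\overline{(H_0-\lambda)v} - \bar v\,(H_0-\lambda)u = \bar v\,\Delta u - u\,\Delta\bar v,
\]
the second equality holding because $\lambda$ is real. Since $B^*=(B)^*$ and by hypothesis $(H_0-\lambda)u,(H_0-\lambda)v\in B$ while $u,v\in B^*$, each of the two inner products on the left-hand side of the identity is an absolutely convergent integral; hence that left-hand side equals $\lim_{R\to\infty}\int_{\abs{x}<R}F\,dx$. By elliptic regularity $u,v\in H^2_{\mathrm{loc}}(\mR^n)$, so Green's second identity applies on each ball $\{\abs{x}<R\}$ (with traces on the sphere, valid for a.e.\ $R$), and, the outward normal on the sphere being radial,
\[
\int_{\abs{x}<R} F\,dx = \int_{\abs{x}=R}\bigl[(\partial_r u)\,\bar v - u\,\partial_r\bar v\bigr]\,dS =: b(R).
\]
Therefore $b(R)$ converges as $R\to\infty$ and it remains only to evaluate this limit.

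Next I would insert the asymptotics. In addition to the given expansions of $u$ and $v$ one needs the matching expansions of the radial derivatives,
\[
\partial_r u \sim i\sqrt\lambda\, r^{-\frac{n-1}{2}}\bigl[e^{i\sqrt\lambda r} g_+(\sqrt\lambda\,\theta) - e^{-i\sqrt\lambda r} g_-(-\sqrt\lambda\,\theta)\bigr],
\]
and analogously for $v$; these are part of the Agmon--H\"ormander description of solutions $w\in B^*$ of $(H_0-\lambda)w\in B$ (see \cite[Chapter XIV]{H2}, \cite{PSU_magnetic}), with the differentiated error terms again lying in $\mathring{B}^*$ up to lower-order powers of $r$. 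Substituting into $b(R)$, the two $e^{\pm 2i\sqrt\lambda r}$ cross-terms arising from $(\partial_r u)\bar v$ and from $u\,\partial_r\bar v$ have opposite signs and cancel, leaving the non-oscillatory part $2i\sqrt\lambda\, r^{-(n-1)}\bigl[g_+(\sqrt\lambda\theta)\overline{h_+(\sqrt\lambda\theta)} - g_-(-\sqrt\lambda\theta)\overline{h_-(-\sqrt\lambda\theta)}\bigr]$. Integrating over $\{\abs{x}=R\}$ — where the factor $r^{-(n-1)}$ is exactly cancelled by the surface measure $R^{n-1}\,d\theta$ — then substituting $\theta\mapsto-\theta$ in the second term and changing variables from $S^{n-1}$ to $M_\lambda$ via $\xi=\sqrt\lambda\,\theta$, $dS_\lambda=\lambda^{(n-1)/2}\,d\theta$, produces precisely $2i\sqrt\lambda\,\lambda^{-(n-1)/2}\bigl[(g_+|h_+)_{M_\lambda} - (g_-|h_-)_{M_\lambda}\bigr] = 2i\lambda^{-\frac{n-2}{2}}\bigl[(g_+|h_+)_{M_\lambda} - (g_-|h_-)_{M_\lambda}\bigr]$.

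The delicate point, which I would treat most carefully, is that ``$\sim$'' only encodes $\mathring{B}^*$-convergence, i.e.\ convergence of $L^2$ averages over balls, rather than pointwise control on individual spheres. To make the substitution above rigorous I would pick a sequence $R_j\to\infty$ along which the spherical $L^2$ norms of all the $\mathring{B}^*$ remainders (and of the differentiated remainders) are $o\bigl(R_j^{-(n-1)/2}\bigr)$ — possible precisely because $\tfrac{1}{R}\int_{\abs{x}<R}\abs{\,\cdot\,}^2\,dx\to 0$ — and then estimate every remainder contribution to $b(R_j)$ by Cauchy--Schwarz against the explicit main terms. Since $b(R)$ is already known to converge, to the left-hand side of the identity, the value computed along $\{R_j\}$ must equal that limit, which finishes the proof. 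The only genuinely external ingredient is the radial-derivative asymptotics, which I would quote from the Agmon--H\"ormander theory rather than reprove.
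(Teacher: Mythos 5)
The paper does not actually prove this proposition: it is quoted as the standard ``boundary pairing'' from \cite{melrose} and \cite[Proposition 2.3]{PSU_magnetic}, so there is no internal argument to compare against, and your proposal supplies essentially the classical proof those references use. Your main computation is correct: Green's second identity on balls (justified by $u,v\in H^2_{\mathrm{loc}}$ and the absolute convergence of the $B$--$B^*$ pairings), cancellation of the $e^{\pm 2i\sqrt{\lambda}r}$ cross terms, and the change of variables $\theta\mapsto-\theta$, $\xi=\sqrt{\lambda}\,\theta$ giving exactly the factor $2i\lambda^{-\frac{n-2}{2}}$. Two remarks. First, on the one genuinely delicate step: the pigeonhole over annuli that the $\mathring{B}^*$ condition permits yields radii $R_j$ with $\int_{\abs{x}=R_j}\abs{w}^2\,dS=o(1)$ for each remainder $w\in\mathring{B}^*$, and since the main terms have surface $L^2$ norm $O(1)$ on $\{\abs{x}=R_j\}$, this $o(1)$ bound is precisely what your Cauchy--Schwarz step needs; your stated requirement that the spherical norms be $o\bigl(R_j^{-(n-1)/2}\bigr)$ is correct only if it refers to the pulled-back norm on $S^{n-1}$ with respect to $d\theta$ --- read as a surface-measure bound it is both unobtainable from membership in $\mathring{B}^*$ and unnecessary, so the normalization should be made explicit. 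Second, the radial-derivative asymptotics are not among the hypotheses and must indeed be imported from the Agmon--H\"ormander theory (\cite[Chapter XIV]{H2}, \cite{PSU_magnetic}); since the paper imports the entire proposition from those sources, quoting that ingredient is legitimate, but a fully self-contained proof would have to establish it as well.
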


As a consequence, the existence of a nontrivial $g \in L^2(M_{\lambda})$ for which $A_V(\lambda)g = 0$ is characterized by the following orthogonality identity:

\begin{prop} \label{prop:boundary_pairing_application}
Let $V$ be a short range potential, let $\lambda > 0$, and let $g \in L^2(M_{\lambda})$. Then $A_V(\lambda) g = 0$ if and only if for all $f \in L^2(M_{\lambda})$ one has 
$$
\int_{\mR^n} V u v_0 \,dx = 0
$$
where $u = P_V(\lambda) f$ and $v_0 = P_0(\lambda) g$.
\end{prop}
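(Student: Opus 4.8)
The plan is to deduce this directly from the boundary pairing of Proposition~\ref{prop:boundary_pairing}, applied to the pair $u=P_V(\lambda)f$ and $\overline{v_0}$, where $v_0=P_0(\lambda)g$. First I would check the hypotheses: $u$ and $\overline{v_0}$ both lie in $B_2^*\subset B^*$; since $v_0$ solves the free Helmholtz equation, $(H_0-\lambda)\overline{v_0}=0\in B$; and since $u$ solves $(-\Delta+V-\lambda)u=0$ we have $(H_0-\lambda)u=-Vu$, which lies in $B$ because multiplication by a short range $V$ (with $\abs{V(x)}\lesssim\br{x}^{-1-\eps}$) maps $B^*$ boundedly into $B$ (an easy dyadic estimate). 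The leading asymptotic coefficients are then read off from the Asymptotics subsection: for $u$ one has $g_+=c_\lambda S_V(\lambda)f$ and $g_-=c_\lambda i^{n-1}f$, while for $\overline{v_0}$ one has $h_+(\xi)=\overline{c_\lambda}\,\overline{i^{n-1}}\,\overline{g(-\xi)}$ and $h_-(\xi)=\overline{c_\lambda}\,\overline{g(-\xi)}$ (using $S_0(\lambda)=\mathrm{Id}$).

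Next I would evaluate both sides of the boundary pairing identity. The left side is $(u\,|\,(H_0-\lambda)\overline{v_0})_{\mR^n}-((H_0-\lambda)u\,|\,\overline{v_0})_{\mR^n}=0-(-Vu\,|\,\overline{v_0})_{\mR^n}=\int_{\mR^n}Vuv_0\,dx$, the conjugation in the $L^2$ inner product cancelling that in $\overline{v_0}$. On the right side, the contributions coming from $g_-$ and $h_-$ match up, so the remaining terms combine into $S_V(\lambda)f-f=A_V(\lambda)f$, leaving
\[
\int_{\mR^n}Vuv_0\,dx=\kappa\int_{M_\lambda}(A_V(\lambda)f)(\xi)\,g(-\xi)\,dS_\lambda(\xi)
\]
for a nonzero constant $\kappa=\kappa(n,\lambda)$ and every $f\in L^2(M_\lambda)$.

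Finally I would conclude. The right side vanishes for all $f$ precisely when the bilinear functional $f\mapsto\int_{M_\lambda}(A_V(\lambda)f)(\xi)\,g(-\xi)\,dS_\lambda(\xi)$ is identically zero; by the reciprocity relation for the scattering matrix of the real potential $V$ — equivalently $A_V(\lambda)^{T}=R\,A_V(\lambda)\,R$, where $R$ is the reflection $\xi\mapsto-\xi$ on $M_\lambda$ — this holds if and only if $A_V(\lambda)g=0$. The point requiring care here is bookkeeping rather than ideas: one must track the normalizations $c_\lambda$ and the phases $i^{n-1}$ so that the incoming parts cancel and only $A_V(\lambda)=S_V(\lambda)-S_0(\lambda)$ survives, and one must invoke the standard facts that $V\cdot\colon B^*\to B$ for short range $V$ and that $S_V(\lambda)$ satisfies reciprocity; both are classical in Agmon--H\"ormander scattering theory, and I would simply cite \cite{PSU_magnetic} and \cite[Chapter~XIV]{H2}. (One could equally apply the pairing to $u$ and $v_0$ itself, obtaining $\int_{\mR^n}Vu\,\overline{v_0}\,dx=\kappa'(A_V(\lambda)f\,|\,g)_{M_\lambda}$ and then using that $A_V(\lambda)g=0\Leftrightarrow A_V(\lambda)^*g=0$ by unitarity of $S_V(\lambda)$.)
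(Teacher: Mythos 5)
Your proof is correct, but it takes a somewhat different route from the paper's. You pair $u=P_V(\lambda)f$ against $\overline{v_0}$ (which, after tracking conjugates, produces exactly the bilinear quantity $\int V u v_0\,dx$ appearing in the statement) and arrive at $\int V u v_0\,dx = \kappa\int_{M_\lambda}(A_V(\lambda)f)(\xi)\,g(-\xi)\,dS_\lambda$; to convert the vanishing of this for all $f$ into $A_V(\lambda)g=0$ you then need the reciprocity relation $A_V(\lambda)^{T}=R\,A_V(\lambda)\,R$, a classical fact for real-valued short range potentials (real-valuedness is the standing assumption of this section, and is equally needed for the unitarity of $S_V(\lambda)$). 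The paper instead applies the boundary pairing with $u=P_V(\lambda)S_V(\lambda)^*f$ and $v=v_0$: the substitution $f\mapsto S_V(\lambda)^*f$ together with unitarity makes the spherical pairing collapse directly to $-(f\,|\,(S_V(\lambda)-S_0(\lambda))g)_{M_\lambda}$, so no reciprocity is invoked; since $S_V(\lambda)^*$ is a bijection of $L^2(M_\lambda)$, quantifying over all $f$ is unaffected. What your version buys is a more scrupulous treatment of the complex conjugations: the paper's displayed computation writes $\int V u v_0$ for what the sesquilinear pairing literally yields as $\int V u \overline{v_0}$ (and $c_\lambda^2$ for $\abs{c_\lambda}^2$), a sloppiness your conjugate-$v_0$ bookkeeping removes at the price of the extra (standard, citable) reciprocity ingredient. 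One small caveat: your parenthetical alternative, pairing $u$ with $v_0$ itself and using $A_V(\lambda)g=0\Leftrightarrow A_V(\lambda)^*g=0$, proves the statement with $\int Vu\,\overline{v_0}$ rather than $\int Vu\,v_0$, so as written it establishes a conjugated variant and would still need the relation between $\overline{v_0}$ and the Herglotz wave of $-\overline{g(-\cdot)}$ (i.e., reciprocity again) to recover the proposition as stated; your main argument does not suffer from this.
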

\begin{proof}
Apply Proposition \ref{prop:boundary_pairing} with $u = P_V(\lambda)S_V(\lambda)^* f$ and $v = v_0$ to obtain 
\begin{align*}
\int_{\mR^n} V u v_0 \,dx &= 2i \lambda^{-\frac{n-2}{2}} c_{\lambda}^2 \Big[ (S_V(\lambda) S_V(\lambda)^* f | g)_{M_{\lambda}} - (S_V(\lambda)^* f| g)_{M_{\lambda}} \Big] \\
 &= -2i \lambda^{-\frac{n-2}{2}} c_{\lambda}^2 (f| (S_V(\lambda) - S_0(\lambda))g)_{M_{\lambda}}
\end{align*}
since $S_V(\lambda)$ is unitary and $S_0(\lambda) = \text{Id}$. Now $A_V(\lambda)g = 0$ if and only if the integral over $\mR^n$ vanishes for all $f \in L^2(M_{\lambda})$.
\end{proof}

The last proposition shows that if $A_V(\lambda)g = 0$, then the function $V v_0$ where $v_0 = P_0(\lambda)g$ is orthogonal to all scattering solutions $P_V(\lambda) f$ where $f \in L^2(M_{\lambda})$. It is well known that if $V$ has certain decay properties, then solutions of $(-\Delta+V-\lambda)u = 0$ satisfying corresponding growth conditions can be approximated by scattering solutions. The next result from \cite{uhlmannvasy} (see also \cite[Proposition 2.4]{PSU_magnetic}) concerns exponentially decaying potentials.

\begin{prop} \label{prop:density}
Assume that $V \in L^{\infty}(\mR^n)$ satisfies $\abs{V(x)} \leq C e^{-\gamma_0 \br{x}}$ for some $\gamma_0 > 0$. Let $0 < \gamma < \gamma_0$. Given any $u \in e^{\gamma \br{x}} L^2$ with $(-\Delta+V-\lambda)u = 0$, there exist $g_j \in L^2(M_{\lambda})$ such that $P_V(\lambda) g_j \to u$ in $e^{\gamma_0 \br{x}} L^2$.
\end{prop}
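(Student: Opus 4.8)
\textbf{Proof proposal for Proposition \ref{prop:exponential_orthogonality}.}

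The plan is to combine the orthogonality identity of Proposition \ref{prop:boundary_pairing_application} with the density statement of Proposition \ref{prop:density}. First, since $V$ is superexponentially decaying, it satisfies $\abs{V(x)} \leq C e^{-\gamma_0 \br{x}}$ for every $\gamma_0 > 0$; in particular it is a short range potential, so the whole apparatus of Section \ref{sec_orthogonality_identity} applies. Because $\lambda > 0$ is a non-scattering energy, by definition there exists a nontrivial $g \in L^2(M_{\lambda})$ with $A_V(\lambda)g = 0$. Set $v_0 = P_0(\lambda)g$. Then $v_0 \in B_2^* \subset B^*$, it solves $(-\Delta-\lambda)v_0 = 0$ in $\mR^n$, and $v_0 \not\equiv 0$ since $g$ is nontrivial and $P_0(\lambda)$ is injective (its far field asymptotics recover $g$). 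By Proposition \ref{prop:boundary_pairing_application}, we have $\int_{\mR^n} V (P_V(\lambda)f) v_0 \,dx = 0$ for every $f \in L^2(M_{\lambda})$.

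Next, fix an arbitrary $u \in L^2_{loc}(\mR^n)$ with $(-\Delta+V-\lambda)u = 0$ in $\mR^n$ and $u \in e^{\gamma \br{x}} L^2(\mR^n)$ for some $\gamma > 0$. Since $V$ is superexponentially decaying we may pick $\gamma_0 > \gamma$ with $\abs{V(x)} \leq C e^{-\gamma_0 \br{x}}$; then Proposition \ref{prop:density} furnishes $g_j \in L^2(M_{\lambda})$ with $P_V(\lambda)g_j \to u$ in $e^{\gamma_0 \br{x}} L^2$. Applying the orthogonality identity to $f = g_j$ gives $\int_{\mR^n} V (P_V(\lambda)g_j) v_0 \,dx = 0$ for all $j$. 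It remains to pass to the limit in this integral, i.e.\ to show $\int_{\mR^n} V (P_V(\lambda)g_j) v_0 \,dx \to \int_{\mR^n} V u v_0 \,dx$.

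For the convergence, write the difference as $\int_{\mR^n} V\,(P_V(\lambda)g_j - u)\,v_0\,dx$ and estimate by Cauchy--Schwarz after inserting weights: split $V v_0 = (e^{-\gamma_0\br{x}} v_0)\cdot(e^{\gamma_0\br{x}} V)$ if convenient, or more directly bound
\[
\left| \int_{\mR^n} V (P_V(\lambda)g_j - u) v_0 \,dx \right| \leq \norm{e^{\gamma_0 \br{x}}(P_V(\lambda)g_j - u)}_{L^2} \norm{e^{-\gamma_0 \br{x}} V v_0}_{L^2}.
\]
The first factor tends to $0$ by Proposition \ref{prop:density}. The second factor is finite: since $v_0 \in B^* \subset L^2_{loc}$ grows at most polynomially (indeed $\norm{v_0}_{B^*} < \infty$ means $\int_{\abs{x}<R}\abs{v_0}^2 \lesssim R$), and $V$ decays superexponentially, the product $e^{-\gamma_0\br{x}} V v_0$ is dominated by an exponentially decaying function times the $B^*$ growth of $v_0$, hence lies in $L^2(\mR^n)$; one makes this precise by the dyadic decomposition $\mR^n = \bigcup_j X_j$ used to define $B$ and $B^*$, bounding $\int_{X_j} \abs{e^{-\gamma_0\br{x}}Vv_0}^2 \lesssim e^{-\gamma_0 2^{j}} 2^{j} \norm{v_0}_{B^*}^2$ and summing the geometric-type series. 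Therefore $\int_{\mR^n} V u v_0 \,dx = \lim_j \int_{\mR^n} V (P_V(\lambda)g_j) v_0 \,dx = 0$, which is the claim, with the same $v_0$ working for all such $u$. The main obstacle is this last convergence step, specifically checking integrability of $V v_0$ against the exponential weight; this is routine given the decay of $V$ and the $B^*$ bound on $v_0$, but it is the one place where one must be careful, since $v_0$ itself is only in $B^*$ and need not decay.
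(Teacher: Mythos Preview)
Your proposal does not address the stated Proposition~\ref{prop:density} at all. As your own header indicates, what you have written is a proof of Proposition~\ref{prop:exponential_orthogonality}, and in that proof you invoke Proposition~\ref{prop:density} as a black box. The statement you were asked to prove is the density result itself: given $u \in e^{\gamma\br{x}}L^2$ solving $(-\Delta+V-\lambda)u=0$, construct $g_j \in L^2(M_\lambda)$ with $P_V(\lambda)g_j \to u$ in $e^{\gamma_0\br{x}}L^2$. Nothing in your write-up produces such $g_j$ or establishes this approximation; you simply assume it.

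For the record, the paper does not supply its own proof of Proposition~\ref{prop:density} either; the result is quoted from \cite{uhlmannvasy} (see also \cite[Proposition~2.4]{PSU_magnetic}). The argument there is of a quite different nature from anything in your write-up --- it relies on resolvent estimates and analytic Fredholm theory in exponentially weighted $L^2$ spaces, not on the boundary pairing.

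As an aside, what you did write is a correct and carefully detailed proof of Proposition~\ref{prop:exponential_orthogonality}, following exactly the paper's route (combine Propositions~\ref{prop:boundary_pairing_application} and~\ref{prop:density} and pass to the limit). The paper compresses this into one line via Proposition~\ref{prop:exponential_orthogonality2}, whereas you spell out the limiting step and the integrability of $e^{-\gamma_0\br{x}}Vv_0$ explicitly. But that is not the proposition under review.
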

 
We will later want to use complex geometrical optics solutions $u$. Since these may have arbitrarily large exponential growth, it is natural to assume that the potential is superexponentially decaying.

\begin{prop} \label{prop:exponential_orthogonality2}
Let $V$ be a superexponentially decaying potential, let $\lambda > 0$, and let $g \in L^2(M_{\lambda})$. Then $A_V(\lambda) g = 0$ if and only if one has 
$$
\int_{\mR^n} V u v_0 \,dx = 0
$$
for $v_0 = P_0(\lambda) g$ and for all $u \in L^2_{loc}(\mR^n)$ such that $(-\Delta+V-\lambda)u=0$ in $\mR^n$ and $u \in e^{\gamma \br{x}} L^2(\mR^n)$ for some $\gamma > 0$.
\end{prop}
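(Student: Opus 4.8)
The plan is to derive Proposition~\ref{prop:exponential_orthogonality2} from Proposition~\ref{prop:boundary_pairing_application} together with the density statement in Proposition~\ref{prop:density}, by a weighted $L^2$ limiting argument. The reverse implication is the easy one. If $f \in L^2(M_{\lambda})$, then $u = P_V(\lambda)f$ lies in $B^*_2$ and solves $(-\Delta+V-\lambda)u = 0$; moreover any $w \in B^*$ satisfies $\int_{\abs{x}<R}\abs{w}^2 \lesssim R$, so summing over the dyadic shells $X_j$ shows $e^{-\delta\br{x}}w \in L^2$ for every $\delta > 0$. Hence $P_V(\lambda)f$ belongs to $e^{\gamma\br{x}}L^2$ for every $\gamma > 0$ and is an admissible choice of $u$ in the statement. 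Thus the assumed identity $\int V u v_0\,dx = 0$, applied to $u = P_V(\lambda)f$ for all $f \in L^2(M_{\lambda})$, is precisely the condition that Proposition~\ref{prop:boundary_pairing_application} equates with $A_V(\lambda)g = 0$.

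For the forward implication, I would fix $u$ as in the statement with $u \in e^{\gamma\br{x}}L^2$, and pick any $\gamma_0 > \gamma$; since $V$ is superexponentially decaying we have $\abs{V(x)} \leq C_{\gamma_0} e^{-\gamma_0\br{x}}$, so Proposition~\ref{prop:density} produces $g_j \in L^2(M_{\lambda})$ with $P_V(\lambda)g_j \to u$ in $e^{\gamma_0\br{x}}L^2$. Since $A_V(\lambda)g = 0$, Proposition~\ref{prop:boundary_pairing_application} gives $\int V P_V(\lambda)g_j v_0\,dx = 0$ for every $j$, so it suffices to pass to the limit in $\int V\bigl(u - P_V(\lambda)g_j\bigr) v_0\,dx$. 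Here I would split $V = \bigl(e^{\gamma_0\br{x}}V\bigr)\,e^{-\gamma_0\br{x}}$ and apply Cauchy--Schwarz: the factor $e^{-\gamma_0\br{x}}\bigl(u - P_V(\lambda)g_j\bigr)$ tends to $0$ in $L^2$ by the choice of $g_j$, while $e^{\gamma_0\br{x}}V v_0 \in L^2$ because superexponential decay bounds $e^{\gamma_0\br{x}}\abs{V(x)} \lesssim e^{-\br{x}}$ and $v_0 = P_0(\lambda)g \in B^*$ gives $e^{-\br{x}}v_0 \in L^2$ by the dyadic-shell estimate above. Therefore $\int V u v_0\,dx = \lim_j \int V P_V(\lambda)g_j v_0\,dx = 0$.

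I do not expect a genuine obstacle; the proof is a packaging of the earlier results. The only point requiring a little care is the bookkeeping of exponential weights in the limiting argument: one must take the rate $\gamma_0$ in Proposition~\ref{prop:density} strictly larger than the a priori growth rate $\gamma$ of $u$ (harmless, as $V$ decays faster than any exponential), and then check that the residual decay of $V$ left after extracting the weight $e^{\gamma_0\br{x}}$ still beats the at most polynomial growth of the Herglotz wave $v_0$. Both facts are immediate consequences of the superexponential decay hypothesis on $V$ and the definition of $B^*$.
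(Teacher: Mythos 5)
Your proof is correct and follows the same route as the paper, which obtains the proposition precisely by combining Proposition~\ref{prop:boundary_pairing_application} with the density result of Proposition~\ref{prop:density}; you have merely made explicit the weighted Cauchy--Schwarz limiting step and the observation that $B^*\subset e^{\gamma\br{x}}L^2$ for every $\gamma>0$, which the paper leaves to the reader.
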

\begin{proof}
Follows by combining Propositions \ref{prop:boundary_pairing_application} and \ref{prop:density}.
\end{proof}

\begin{proof}[Proof of Proposition \ref{prop:exponential_orthogonality}]
Follows immediately from Proposition \ref{prop:exponential_orthogonality2}.
\end{proof}

\section{Complex geometrical optics solutions}\label{CGO-section}

By Proposition \ref{prop:exponential_orthogonality} we know that if $V$ is superexponentially decaying and if $\lambda > 0$ is a non-scattering energy for $V$, then there exists a nontrivial $v_0 \in B^*$ satisfying $(-\Delta-\lambda)v_0 = 0$ such that we have 
$$
\int_{\mR^n} V u v_0 \,dx = 0
$$
for any exponentially growing solution $u$ of $(-\Delta+V-\lambda)u = 0$.

We will employ complex geometrical optics solutions as the solutions $u$ above. It will be important to have $L^q$ estimates for large $q$ with suitable decay for the remainder term $\psi$ in these solutions. In \cite{BPS}, such solutions were constructed in all dimensions $n \geq 2$ but for ``polygonal'' cones $C$ (that is, the cross-section of the cone is a polygon). For our higher dimensional result it is more convenient to use circular cones, and it turns out that the argument of \cite{BPS} is not valid in this case. Therefore we base our construction on certain $L^p$ estimates from \cite{KRS}. This argument gives sufficient estimates for $n=2, 3$ but not for $n \geq 4$.

\begin{thm} \label{thm_cgo}
Let $\lambda > 0$, assume that $n \in \{2,3\}$, and let $q=\frac{2(n+1)}{n-1}$. Let $V(x) = \chi_{C}(x) \varphi(x)$ where $C \subset \mR^n$ is a closed circular cone with opening angle $< \pi$, and where $\varphi$ satisfies $\br{x}^{\alpha} \varphi \in C^s(\mR^n)$ for some $\alpha > 5/3$ and $s > 0$ if $n=2$ (resp.\ $\alpha > 9/4$ and $s > 1/4$ if $n=3$).

If $\rho \in \mC^n$ satisfies $\rho \cdot \rho = -\lambda$ and $\abs{\im(\rho)}$ is sufficiently large, there is a solution of 
$$
(-\Delta+V-\lambda)u = 0 \quad \text{in $\mR^n$}
$$
of the form $u = e^{-\rho \cdot x}(1+\psi)$, where $\psi$ satisfies for some $\delta > 0$ 
$$
\norm{\psi}_{L^q(\mR^n)} = O(\abs{\im(\rho)}^{-n/q-\delta}) \text{ as $\abs{\rho} \to \infty$}.
$$
\end{thm}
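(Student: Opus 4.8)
The plan is to follow the standard Sylvester--Uhlmann Neumann series scheme, but with the crucial ingredient being the uniform $L^p \to L^q$ estimates of Kenig, Ruiz and Sogge \cite{KRS} for the operator $(-\Delta - \lambda)^{-1}$ with a complex shift, rather than the Sobolev-space estimates used in Calderón-type problems. Writing the desired solution as $u = e^{-\rho \cdot x}(1 + \psi)$ and conjugating the Schrödinger operator, one finds that $\psi$ must solve
\[
(-\Delta - 2\rho \cdot \nabla)\psi + V\psi = -V,
\]
since $\rho \cdot \rho = -\lambda$ makes the zeroth-order term from $e^{-\rho \cdot x}$ exactly cancel the $-\lambda$. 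The conjugated Laplacian $-\Delta - 2\rho\cdot\nabla$ has symbol $\abs{\xi}^2 - 2i\rho\cdot\xi$, which (after completing the square and using $\rho\cdot\rho = -\lambda$) is a translate of $\abs{\zeta}^2 - \lambda$ in the complex direction $\operatorname{Im}(\rho)$; hence its inverse $G_\rho$ is, up to translation in Fourier space, precisely the resolvent $(-\Delta - \lambda - i0)^{-1}$ analyzed in \cite{KRS}. The key point is the bound $\norm{G_\rho f}_{L^q} \lesssim \abs{\operatorname{Im}(\rho)}^{-1} \norm{f}_{L^{q'}}$ with $q = 2(n+1)/(n-1)$ the dual Stein--Tomas exponent, which is uniform in $\rho$ for $\abs{\operatorname{Im}\rho}$ large; I will also need the companion estimate controlling $G_\rho : L^{q'} \to L^q$ with an improved power of $\abs{\operatorname{Im}\rho}$ under an additional Hölder/decay hypothesis on the data, or alternatively a Hölder-space smoothing estimate to gain the extra $\delta$.

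First I would set up the Neumann series: $\psi = -G_\rho V - G_\rho V G_\rho V - \cdots = -\sum_{k \geq 1} (G_\rho M_V)^{k-1} G_\rho (V)$ where $M_V$ is multiplication by $V$. For this to converge in $L^q$ one needs the operator norm of $M_V \circ G_\rho : L^q \to L^q$ to be small; by Hölder's inequality this factors through $G_\rho : L^{q'} \to L^q$ composed with multiplication by $V$, which maps $L^q$ into $L^{q'}$ provided $V$ lies in $L^r$ with $1/r = 2/n$ — but $\chi_C \varphi$ is only bounded and superexponentially decaying, so $V \in L^r$ for every $r$, and the relevant norm $\norm{V}_{L^{n/2}}$ is finite. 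Thus $\norm{M_V G_\rho}_{L^q \to L^q} \lesssim \abs{\operatorname{Im}\rho}^{-1}\norm{V}_{L^{n/2}} \to 0$, giving convergence and $\norm{\psi}_{L^q} \lesssim \abs{\operatorname{Im}\rho}^{-1}\norm{V}_{L^{q'}}$ by estimating the first term. The last step is to upgrade the exponent from $-1$ to $-n/q - \delta$: here I would exploit the extra regularity $\br{x}^\alpha \varphi \in C^s$ with $\alpha > 5/3$ (resp.\ $> 9/4$) and $s > 0$ (resp.\ $s > 1/4$), which lets the source term $V = \chi_C\varphi$ be split into a piece that is small in a suitable weighted/Hölder norm and a remainder with extra decay, so that a refined version of the \cite{KRS} estimate — measuring the data in a norm that sees the Hölder smoothness and polynomial decay — yields the gain $\abs{\operatorname{Im}\rho}^{-\delta}$ beyond the baseline scaling $\abs{\operatorname{Im}\rho}^{-n/q}$ that already comes from interpolating the $L^{q'}\to L^q$ bound against an $L^2$-weighted bound. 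The precise bookkeeping of which $\alpha$ and $s$ thresholds are needed is exactly where the numerology $5/3$ and $9/4$ enters.

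The main obstacle, and the reason dimensions $n \geq 4$ are excluded, is that the resolvent estimate of \cite{KRS} at the Stein--Tomas exponent only gives the decay $\abs{\operatorname{Im}\rho}^{-1}$, and the extra decay $\delta$ one can squeeze out by interpolation with weighted $L^2$ estimates degrades as $n$ grows because the multiplier $V = \chi_C\varphi$ — essentially a characteristic function of a cone — has very limited Sobolev regularity (roughly, $\chi_C \in H^{s}$ only for $s < 1/2$), and the amount of regularity one can transfer to $\psi$ to beat the growth $e^{-\rho\cdot x}$ in the orthogonality identity shrinks. Concretely, the delicate part of the proof will be proving the refined mapping property of $G_\rho$ on the weighted Hölder-type space adapted to $\br{x}^\alpha\varphi \in C^s$, i.e.\ establishing that one genuinely gains a power $\abs{\operatorname{Im}\rho}^{-\delta}$ with $\delta > 0$ for the specified ranges of $\alpha$ and $s$; everything else is a routine fixed-point argument. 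I would handle this by writing $V G_\rho$ as a sum of a low-frequency and a high-frequency piece relative to $\abs{\operatorname{Im}\rho}$, estimating the low-frequency piece by the $L^{q'}\to L^q$ bound with a favorable frequency-localized constant and the high-frequency piece by using the $C^s$ regularity of $\br{x}^\alpha\varphi$ to absorb a factor of $\abs{\operatorname{Im}\rho}^{-s/\text{something}}$, then optimizing the split.
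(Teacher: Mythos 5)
Your skeleton---conjugate to $(-\Delta + 2\zeta\cdot D + V)\psi = -V$, invert the conjugated Laplacian via the uniform estimates of \cite{KRS}, and sum a Neumann series---is indeed the same as the paper's starting point. But the quantitative core is wrong or missing. First, the ``key bound'' you quote is off: at the dual Stein--Tomas pair $q=\frac{2(n+1)}{n-1}$, $q'=\frac{2(n+1)}{n+3}$, the KRS inequality gives $\norm{G_\zeta f}_{L^{q}} \lesssim \abs{\re(\zeta)}^{\,n(1/q'-1/q)-2}\norm{f}_{L^{q'}} = \abs{\re(\zeta)}^{-2/(n+1)}\norm{f}_{L^{q'}}$, i.e.\ decay $-2/3$ for $n=2$ and $-1/2$ for $n=3$, not $\abs{\im(\rho)}^{-1}$ (that rate belongs to weighted $L^2$ resolvent estimates). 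Likewise H\"older with $1/q'-1/q=2/(n+1)$ requires $V\in L^{(n+1)/2}$, not $L^{n/2}$. These are not mere bookkeeping slips: for $n=3$ the pure $L^{q'}\to L^{q}$ scheme yields only $\norm{\psi}_{L^{q}}\lesssim\abs{\zeta}^{-1/2}$, which is far short of the target $n/q+\delta=3/4+\delta$, so the entire content of the theorem lives in the part you defer to ``precise bookkeeping.''

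Second, and this is the genuine gap: the mechanism producing the extra decay, and the place where $\alpha>5/3$ (resp.\ $9/4$) and $s>1/4$ actually enter, is never carried out. You propose a frequency splitting of $G_\rho$ at scale $\abs{\im(\rho)}$ together with a refined resolvent estimate on a weighted H\"older-type space, but you neither formulate nor prove such an estimate, and it is not what the paper does. The paper runs the Neumann series in Bessel potential spaces: a duality argument upgrades KRS to $G_\zeta\colon H^{t,q'}\to H^{t,q}$ with the same constant $\abs{\re(\zeta)}^{-2/(n+1)}$ for every $t$, and the real work is proving that $V=\chi_C\varphi$ is a pointwise multiplier from $H^{t,q}$ to $H^{t,q'}$ for some $t>0$ ($t>1/4$ when $n=3$). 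That uses $\br{x}^{-\alpha}\chi_C\in H^{\tau,p}$ for all $\tau<1/2$, $1<p\le 2$, $\alpha>n/p$ (explicit Fourier transforms of cylinders, a dyadic decomposition of the cone, and the dilation behaviour of the $H^{\tau,p}$ norm), the fact that $C^s$ functions multiply $H^{t,p}$, and a bilinear complex interpolation lemma showing $F\in H^{\tau,\tilde p}$ with $\tilde p=p/(p-2)$ multiplies $H^{\tau,p}\to H^{\tau,p'}$; the thresholds $5/3$ and $9/4$ are exactly the values of $n/p$ for the exponents appearing there. The gain $\delta$ then comes not from any improvement of the resolvent bound but from Sobolev embedding: $\norm{\psi}_{H^{t,q}}\lesssim\abs{\re(\zeta)}^{-2/(n+1)}$ and $H^{t,q}\subset L^{\tilde q}$ with $\tilde q=nq/(n-tq)>q$, so that $2/(n+1)=n/\tilde q+\delta$ with $\delta>0$ precisely when $t>0$ ($n=2$), $t>1/4$ ($n=3$). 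Without establishing either your refined resolvent estimate or this multiplier property, the proposal does not prove the stated decay; also note your claim that the baseline $\abs{\im(\rho)}^{-n/q}$ ``already comes from interpolating'' against weighted $L^2$ bounds is unsubstantiated for $n=3$.
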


We begin by stating some a priori inequalities. Below we will write $D = -i\nabla$, $\mathscr{S}$ will be the space of Schwartz test functions, and $H^{s,p}$ for $s \in \mR$ and $1<p<\infty$ will be the Bessel potential space with norm $\norm{u}_{H^{s,p}} = \norm{\br{D}^s f}_{L^p}$ with $\br{D} = (1+D^2)^{1/2}$. Also, $r'$ will denote the H\"older conjugate exponent of $r$ (so that $1/r + 1/r' = 1$).

\begin{prop} \label{prop_krs}
Let $n \geq 2$, let $1 < r < 2$, and assume that 
$$
\frac{1}{r} - \frac{1}{r'} \in \left\{ \begin{array}{cc} {\left[\frac{2}{n+1}, \frac{2}{n}\right]} & \text{if }n \geq 3, \\[5pt] {\left[\frac{2}{n+1}, \frac{2}{n}\right)} & \text{if } n=2. \end{array} \right.
$$
There is a constant $M > 0$ such that for any $\zeta \in \mC^n$ with $\re(\zeta) \neq 0$, one has 
$$
\norm{f}_{L^{r'}} \leq M \abs{\re(\zeta)}^{n(1/r-1/r')-2} \norm{(-\Delta+2\zeta \cdot D)f}_{L^r}, \quad f \in \mathscr{S}.
$$
\end{prop}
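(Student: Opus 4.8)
The plan is to deduce this weighted $L^r \to L^{r'}$ resolvent estimate from the Stein--Tomas-type restriction bounds of Kenig--Ruiz--Sogge (KRS), by a scaling argument. The operator $-\Delta + 2\zeta \cdot D$ has full symbol $\abs{\xi}^2 + 2\zeta\cdot\xi = \abs{\xi + \zeta}^2 - \zeta\cdot\zeta$, so after completing the square and writing $\zeta = \re(\zeta) + i\im(\zeta)$ we see that inverting $-\Delta + 2\zeta\cdot D$ amounts to a constant-coefficient multiplier whose symbol vanishes on the sphere $\{\xi : \abs{\xi + \re(\zeta)}^2 = \abs{\im(\zeta)}^2 - \abs{\re(\zeta)}^2 + \ldots\}$ (after splitting real and imaginary parts of the quadratic). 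The key point, which is exactly the content of \cite{KRS}, is the uniform bound: for the operator $P_a = -\Delta - 2a\cdot D - 1$ with $a \in \mR^n$ one has $\norm{P_a^{-1} h}_{L^{r'}} \leq C \norm{h}_{L^r}$ with $C$ independent of $a$, provided $1/r - 1/r'$ lies in the stated Stein--Tomas range (the half-open interval at $n=2$ being the familiar endpoint restriction there). First I would state this uniform KRS estimate precisely as the input.

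Next I would run the scaling. Set $t = \abs{\re(\zeta)}$ and write $\re(\zeta) = t\, e$ with $e$ a unit vector; the idea is that after completing the square, $-\Delta + 2\zeta\cdot D$ differs from $t^2$ times an operator of the form $-\Delta + 2\zeta'\cdot D$ with $\abs{\re(\zeta')} = 1$ under the dilation $x \mapsto tx$. Concretely, if $u(x) = f(x)$ and $u_t(x) = f(x/t)$, then $(-\Delta + 2\zeta\cdot D)f$ evaluated after rescaling picks up a factor $t^{-2}$, while the $L^r$ and $L^{r'}$ norms scale by $t^{n/r}$ and $t^{n/r'}$ respectively. Tracking these powers, the uniform bound at scale $\abs{\re(\zeta')}=1$ transforms into
\[
\norm{f}_{L^{r'}} \leq M\, t^{\,n(1/r - 1/r') - 2}\, \norm{(-\Delta + 2\zeta\cdot D)f}_{L^r},
\]
which is precisely the claimed inequality. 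The exponent $n(1/r - 1/r') - 2$ is exactly what the bookkeeping $-2 + n/r - n/r'$ produces, so this is consistent. I would be careful that the imaginary part $\im(\zeta)$ rescales harmlessly (it only shifts the location/size of the sphere in the symbol, and the KRS bound is uniform over all such spheres in the admissible range), so no smallness or largeness of $\im(\zeta)$ is needed — only $\re(\zeta)\neq 0$.

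The main obstacle, or rather the main thing to get right, is verifying that the KRS estimates genuinely apply uniformly to the family of operators arising here: the symbol $\abs{\xi}^2 + 2\zeta\cdot\xi$ with complex $\zeta$, after separating real and imaginary parts, gives a multiplier singular along a sphere whose radius and center depend on both $\re(\zeta)$ and $\im(\zeta)$, and one must confirm this is within the class for which \cite{KRS} proves uniform (in the sphere) bounds — this is standard but is the only non-bookkeeping step. The endpoint subtlety at $n=2$ (why the interval is $[2/(n+1), 2/n)$ rather than closed) is inherited directly from the restriction theorem and needs only to be quoted. Everything else — completing the square, the dilation, and matching powers of $t$ — is routine, so I would present the scaling computation compactly and refer to \cite{KRS} for the uniform resolvent bound itself.
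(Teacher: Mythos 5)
Your overall strategy is the same as the paper's: rescale by $t=\abs{\re(\zeta)}$ to normalize the operator and quote the uniform estimates of \cite{KRS} for the normalized family (the paper's proof is exactly ``\cite[Theorem 2.4]{KRS} after dilations and conjugations by $e^{\pm i \re(\zeta)\cdot x}$, see \cite[Section 5.3]{Ruiz} for $n=2$ and details''). Your dilation bookkeeping is correct: the operator scales by $t^{-2}$ and the norms by $t^{n/r}$, $t^{n/r'}$, which yields exactly the exponent $n(1/r-1/r')-2$.

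However, the input you propose to quote is mis-identified, and this is where your argument as written would fail. After rescaling, the family you must control is $-\Delta+2\zeta'\cdot D$ with $\abs{\re(\zeta')}=1$ and $\im(\zeta')$ an \emph{arbitrary} real vector. Conjugating by $e^{i\theta\cdot x}$ with $\theta\in\mR^n$ (equivalently, completing the square on the Fourier side) can only translate the symbol by a real vector; it removes $\re(\zeta')$ from the first-order term but leaves the purely imaginary first-order part $2i\,\im(\zeta')\cdot D$, which cannot be eliminated without conjugating by the unbounded weight $e^{\im(\zeta')\cdot x}$. So the problem does \emph{not} reduce to your operator $P_a=-\Delta-2a\cdot D-1$ with $a\in\mR^n$, i.e.\ to free-resolvent bounds for $-\Delta-z$: the symbol $\abs{\xi}^2+2\zeta\cdot\xi$ with $\im(\zeta)\neq 0$ is singular not on an $(n-1)$-sphere but on the codimension-two set $\{\abs{\xi+\re(\zeta)}=\abs{\re(\zeta)}\}\cap\{\im(\zeta)\cdot\xi=0\}$, and the required uniformity is over this genuinely complex-coefficient family (in particular uniformly as $\abs{\im(\zeta')}\to 0$ or $\infty$). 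That uniformity is precisely the content of the uniform Sobolev inequalities of \cite{KRS} for second-order operators with \emph{complex} lower-order coefficients, together with the treatment of the non-scale-invariant exponents and of $n=2$ in \cite[Section 5.3]{Ruiz}; if you replace your stated lemma for $P_a$ by that input, the rest of your scaling argument goes through and coincides with the paper's proof.
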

\begin{proof}
This is a consequence of the uniform Sobolev inequalities in \cite{KRS}. In particular, the case $n \geq 3$ follows from \cite[Theorem 2.4]{KRS} after dilations and conjugations by the exponentials $e^{\pm i \re(\zeta) \cdot x}$. For the case $n=2$ and more details, see \cite[Section 5.3]{Ruiz}.
\end{proof}

The next result shows solvability for an inhomogeneous equation related to complex geometrical optics solutions.

\begin{prop} \label{prop_cgo1}
Let $n \geq 2$, let $1 < r < 2$, and assume that 
$$
\frac{1}{r} - \frac{1}{r'} \in \left[\frac{2}{n+1}, \frac{2}{n}\right).
$$
Let $s \in \mR$, and let $V$ be a measurable function in $\mR^n$ satisfying the following multiplier property for some constant $A > 0$:
$$
\norm{Vf}_{H^{s,r}} \leq A \norm{f}_{H^{s,r'}}, \qquad f \in \mathscr{S}.
$$
There exist constants $C > 0$ and $R > 0$ such that whenever $\zeta \in \mC^n$ satisfies $\abs{\re(\zeta)} \geq R$, then for any $f \in H^{s,r}(\mR^n)$ the equation 
$$
(-\Delta + 2 \zeta \cdot D + V) u = f \quad \text{in } \mR^n
$$
has a solution $u \in H^{s,r'}(\mR^n)$ satisfying 
$$
\norm{u}_{H^{s,r'}} \leq C \abs{\re(\zeta)}^{n(1/r-1/r')-2}\norm{f}_{H^{s,r}}.
$$
\end{prop}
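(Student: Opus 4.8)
The plan is to solve the equation $(-\Delta + 2\zeta\cdot D + V)u = f$ by a perturbative argument treating $V$ as a small correction to the operator $-\Delta + 2\zeta\cdot D$. First I would record that Proposition~\ref{prop_krs} provides, for $\abs{\re(\zeta)} \neq 0$, a two-sided estimate allowing us to invert $-\Delta + 2\zeta\cdot D$ on Schwartz functions: for $h \in \mathscr{S}$ the equation $(-\Delta + 2\zeta\cdot D)v = h$ has a unique tempered solution $v = G_\zeta h$ (uniqueness and existence come from the fact that the symbol $\abs{\xi}^2 + 2\zeta\cdot\xi$ vanishes only on a measure-zero set and the multiplier bound from Proposition~\ref{prop_krs}), with
$$
\norm{G_\zeta h}_{L^{r'}} \leq M \abs{\re(\zeta)}^{n(1/r - 1/r') - 2}\norm{h}_{L^r}.
$$
Since multiplication by $\br{D}^s$ commutes with the constant-coefficient operator, the same estimate upgrades to $\norm{G_\zeta h}_{H^{s,r'}} \leq M\abs{\re(\zeta)}^{n(1/r-1/r')-2}\norm{h}_{H^{s,r}}$, first for $h \in \mathscr{S}$ and then by density for all $h \in H^{s,r}$.

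Next I would set up the Neumann series. Writing $u = G_\zeta h$, the equation $(-\Delta + 2\zeta\cdot D + V)u = f$ becomes $(I + V G_\zeta)h = f$, where we think of $V G_\zeta$ as an operator on $H^{s,r}$. By the multiplier hypothesis $\norm{Vg}_{H^{s,r}} \leq A\norm{g}_{H^{s,r'}}$ combined with the $G_\zeta$-estimate above, we get
$$
\norm{V G_\zeta h}_{H^{s,r}} \leq A M \abs{\re(\zeta)}^{n(1/r-1/r')-2}\norm{h}_{H^{s,r}}.
$$
Because the hypothesis forces $1/r - 1/r' < 2/n$, i.e.\ $n(1/r - 1/r') - 2 < 0$, the operator norm of $V G_\zeta$ on $H^{s,r}$ tends to $0$ as $\abs{\re(\zeta)} \to \infty$. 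Hence there is $R > 0$ such that $\abs{\re(\zeta)} \geq R$ implies $\norm{V G_\zeta}_{H^{s,r}\to H^{s,r}} \leq 1/2$, so $I + VG_\zeta$ is invertible on $H^{s,r}$ with $\norm{(I + VG_\zeta)^{-1}} \leq 2$. Set $h = (I + VG_\zeta)^{-1}f$ and $u = G_\zeta h$; then $u \in H^{s,r'}$ solves the desired equation, and
$$
\norm{u}_{H^{s,r'}} \leq M\abs{\re(\zeta)}^{n(1/r-1/r')-2}\norm{h}_{H^{s,r}} \leq 2M\abs{\re(\zeta)}^{n(1/r-1/r')-2}\norm{f}_{H^{s,r}},
$$
which is the claimed bound with $C = 2M$.

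The one point that needs a little care — and which I expect to be the main (mild) obstacle — is justifying that $G_\zeta$ is genuinely well defined as a bounded map $H^{s,r} \to H^{s,r'}$ and that the algebraic manipulation ``$(-\Delta+2\zeta\cdot D)G_\zeta h = h$'' holds in the sense of distributions even though $G_\zeta$ is defined as an $L^r \to L^{r'}$ Fourier multiplier by the symbol $(\abs{\xi}^2 + 2\zeta\cdot\xi)^{-1}$, which is singular on the set $\{\abs{\xi}^2 + 2\zeta\cdot\xi = 0\}$. The resolution is standard: this set has Lebesgue measure zero (it is contained in an affine-quadric hypersurface since $\re(\zeta)\neq 0$), so the multiplier is finite a.e., the a priori inequality of Proposition~\ref{prop_krs} shows the multiplier operator extends boundedly, and applying $-\Delta + 2\zeta\cdot D$ recovers $h$ because multiplying the symbol back cancels off the singular set (a null set). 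One should also note that $u$ so produced is the \emph{only} $H^{s,r'}$ solution, since the difference of two solutions would be an $H^{s,r'}$ distribution annihilated by $-\Delta + 2\zeta\cdot D$, hence Fourier-supported on a null set, hence zero. With these remarks in place the proof is a routine Neumann-series argument, and I would keep the write-up short accordingly.
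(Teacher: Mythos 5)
Your second half (the Neumann series: the bound $\norm{VG_\zeta f}_{H^{s,r}}\le AM\abs{\re(\zeta)}^{n(1/r-1/r')-2}\norm{f}_{H^{s,r}}$, the choice of $R$, and the final estimate with $C=2M$) is exactly the paper's argument. The genuine gap is in the first step, the construction of $G_\zeta$ with the uniform $H^{s,r}\to H^{s,r'}$ bound. Proposition \ref{prop_krs} is only an \emph{a priori} inequality for Schwartz functions, and $G_\zeta h=\mF^{-1}\bigl[(\abs{\xi}^2+2\zeta\cdot\xi)^{-1}\hat h\bigr]$ is not Schwartz (its Fourier transform is singular on the characteristic set), so you cannot apply the estimate to $G_\zeta h$; observing that the characteristic set is a null set does not bridge this. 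To make your route work you would need a genuine approximation argument: cut $\hat h$ off an $\eps$-neighbourhood of the zero set, apply the a priori bound to the resulting Schwartz functions, and show the removed piece tends to $0$ in $L^r$ --- and since $r<2$ this is not controlled by $L^2\cap L^\infty$; one must estimate the $L^r$ norm of a function whose Fourier transform lives near the characteristic variety, using its geometry. Moreover, the hypotheses only require $\abs{\re(\zeta)}\ge R$, so $\im(\zeta)=0$ is allowed; then the zero set is a codimension-one sphere, $(\abs{\xi}^2+2\zeta\cdot\xi)^{-1}$ is not locally integrable, and your definition of $G_\zeta$ fails at the outset. The paper sidesteps all of this by duality: the a priori inequality applied to $\br{D}^{-s}w$ for the adjoint $P^*=-\Delta+2\bar\zeta\cdot D$ defines a bounded linear functional on $P^*(\mathscr{S})\subset H^{-s,r}$, and Hahn--Banach plus duality yield a solution $v\in H^{s,r'}$ of $Pv=f$ with the stated bound, with no need to interpret the singular multiplier. (This is the standard scheme: an estimate for the adjoint gives existence for the operator; an estimate for the operator itself gives at best uniqueness.)

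Two side remarks in your write-up are also incorrect, though harmless for the statement since only existence is claimed: the tempered solution of $(-\Delta+2\zeta\cdot D)v=h$ is \emph{not} unique (one may add the inverse Fourier transform of any measure carried by the characteristic set), and the inference ``an $H^{s,r'}$ function annihilated by $-\Delta+2\zeta\cdot D$ has Fourier support in a null set, hence vanishes'' is only valid for exponents $\le 2$; for $r'>2$ the Fourier transform is merely a distribution, and functions such as $\widehat{d\sigma}$ for the Helmholtz operator show that Fourier support in a null set does not force vanishing. Also, a difference of two solutions of the full equation satisfies the equation with $V$, not the free one. These remarks should simply be dropped rather than repaired.
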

\begin{proof}
Let $\zeta \in \mC^n$ with $\re(\zeta) \neq 0$. We will use a standard duality argument to obtain a solvability result from the a priori estimates in Proposition \ref{prop_krs}. Write $P = (-\Delta + 2\zeta \cdot D)$, so the formal adjoint is $P^* =  (-\Delta + 2 \overline{\zeta} \cdot D)$. Applying Proposition \ref{prop_krs} to $\br{D}^{-s} w$, we have the inequality 
\begin{equation} \label{w_apriori}
\norm{w}_{H^{-s,r'}} \leq M \abs{\re(\zeta)}^{n(1/r-1/r')-2} \norm{P^* w}_{H^{-s,r}}, \quad w \in \mathscr{S}.
\end{equation}
Fix $f \in H^{s,r}$ and define a linear functional 
$$
l: P^*(\mathscr{S}) \subset H^{-s,r} \to \mC,\ \ l(P^* w) = (w,f)
$$
where $(\,\cdot\,,\,\cdot\,)$ is the distributional pairing, defined to be conjugate linear in the second argument. By \eqref{w_apriori} any element of $P^*(\mathscr{S})$ has a unique representation as $P^* w$ for some $w \in \mathscr{S}$, so $l$ is well defined and satisfies 
\begin{align*}
\abs{l(P^* w)} &= \abs{(w,f)} \leq \norm{w}_{H^{-s,r'}} \norm{f}_{H^{s,r}} \\
 &\leq M \abs{\re(\zeta)}^{n(1/r-1/r')-2} \norm{f}_{H^{s,r}} \norm{P^* w}_{H^{-s,r}}.
\end{align*}
By Hahn-Banach we may extend $l$ as a continuous linear functional $\bar{l}: H^{-s,r} \to \mC$ with the same norm bound, and by duality there is $v \in H^{s,r'}$ such that $\bar{l}(w) = (w,v)$ for $w \in H^{-s,r}$ and 
$$
\norm{v}_{H^{s,r'}} \leq M \abs{\re(\zeta)}^{n(1/r-1/r')-2} \norm{f}_{H^{s,r}}.
$$
If $w \in \mathscr{S}$ we have 
$$
(w,Pv) = (P^* w, v) = \bar{l}(P^* w) = l(P^* w) = (w,f)
$$
so that $Pv = f$.

By the above argument, for any $s \in \mR$ there is a linear operator 
$$
G_{\zeta}: H^{s,r} \to H^{s,r'}, \ \ f \mapsto v
$$
where $v$ solves $(-\Delta+2\zeta \cdot D)v = f$, and one has the norm estimate 
$$
\norm{G_{\zeta} f}_{H^{s,r'}} \leq M \abs{\re(\zeta)}^{n(1/r-1/r')-2} \norm{f}_{H^{s,r}}.
$$
This proves the result in the case $V = 0$.

Let us now consider nonzero $V$. Notice that one has 
$$
\norm{V G_{\zeta} f}_{H^{s,r}} \leq A M \abs{\re(\zeta)}^{n(1/r-1/r')-2} \norm{f}_{H^{s,r}}.
$$
Since $n(1/r-1/r')-2 < 0$, we may choose $R > 0$ so that it satisfies $A M R^{n(1/r-1/r')-2} = 1/2$. Assuming that $\abs{\re(\zeta)} \geq R$, we have 
$$
\norm{V G_{\zeta} f}_{H^{s,r}} \leq \frac{1}{2} \norm{f}_{H^{s,r}}.
$$
Now, we can solve $(-\Delta+2\zeta \cdot D + V)u = f$ by taking $u = G_{\zeta} v$ where $v$ is a solution of  
$$
(\mathrm{Id} + V G_{\zeta})v = f.
$$
By the above estimate, this equation for $v$ can be solved by Neumann series and one has $\norm{v}_{H^{s,r}} \leq 2 \norm{f}_{H^{s,r}}$. Then $u = G_{\zeta} v$ is the required solution and it satisfies 
\begin{align*}
\norm{u}_{H^{s,r'}} &\leq M \abs{\re(\zeta)}^{n(1/r-1/r')-2} \norm{v}_{H^{s,r}} \\
 &\leq 2 M \abs{\re(\zeta)}^{n(1/r-1/r')-2} \norm{f}_{H^{s,r}}. \qedhere
\end{align*}
\end{proof}

\begin{prop} \label{prop_cone_pointwisemultiplier}
Let $n\in\left\{2,3\right\}$, let $r = \frac{2(n+1)}{n+3}$ so that $r'=\frac{2(n+1)}{n-1}$, and assume that $V(x) = \chi_{C}(x) \varphi(x)$ where $C \subset \mR^n$ is a closed circular cone with opening angle $< \pi$ and $\br{x}^{\alpha} \varphi \in C^s(\mR^n)$ for some $\alpha > 5/3$ and $0 < s < 1/2$ if $n=2$ (respectively $\alpha>9/4$ and $1/4 < s < 1/2$ if $n=3$). Then $V \in H^{s-\eps,r}(\mR^n)$ for any $\eps > 0$, and there is a constant $A > 0$ such that 
$$
\norm{Vf}_{H^{s-\eps,r}} \leq A \norm{f}_{H^{s-\eps,r'}}.
$$
\end{prop}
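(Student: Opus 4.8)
The plan is to combine two structural features of $V=\chi_C\varphi$: the fractional Sobolev regularity of the indicator $\chi_C$, of order just below $1/p$ at every integrability $p$; and the polynomial decay encoded in $\br x^\alpha\varphi\in C^s$, which lets us exhaust $V$ by pieces supported in dyadic annuli that, rescaled to unit size, become products of $\chi_C$ with a uniformly bounded $C^s$ function. Throughout put $\sigma=s-\eps$; it suffices to treat the case $\sigma>0$, since for $\sigma\le0$ one only needs $V\in L^r$, which holds because $\alpha r>n$. The hypotheses on $s$ give $0<\sigma<\tfrac2{n+1}\le\tfrac1r$, and the hypothesis on $\alpha$ reads exactly $\alpha>n/r$, i.e.\ $\alpha>5/3$ when $n=2$ and $\alpha>9/4$ when $n=3$; we will see these bounds are precisely what the argument requires.

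First I would record the regularity of $\chi_C$. Up to a rotation, $C=\{x:\abs{x'}\le cx_n\}$ is the region lying above the graph of the globally Lipschitz function $x'\mapsto\abs{x'}/c$, so $\chi_C$ is the indicator of a Lipschitz epigraph. By the classical description of such indicators, $\chi_C$ lies locally in the Besov space $B^{1/p}_{p,\infty}$, and hence (losing an arbitrarily small amount of smoothness) in $H^{\tau,p}_{\mathrm{loc}}$ for every $1<p<\infty$ and every $0\le\tau<1/p$; in particular $\chi_C\eta\in H^{\sigma,p}$ for any $\eta\in C^\infty_c(\mR^n)$ as long as $\sigma<1/p$. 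Combining this with the standard product estimate $\norm{gh}_{H^{\sigma,p}}\lesssim\norm{g}_{H^{\sigma,p}}\norm{h}_{C^s}$, which is valid for $0\le\sigma<s$ (this is why we work with $\sigma=s-\eps$ rather than with $s$), we conclude: for any $h\in C^s$ supported in a fixed ball, $\chi_Ch\in H^{\sigma,p}$ with $\norm{\chi_Ch}_{H^{\sigma,p}}\lesssim\norm{h}_{C^s}$, the implied constant independent of $h$, provided $\sigma<1/p$.

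Next I would decompose $\varphi=\sum_{j\ge0}\varphi_j$ by means of a dyadic partition of unity, so that $\supp\varphi_0\subset\{\abs x\le2\}$ and $\supp\varphi_j\subset\{2^{j-1}<\abs x<2^{j+1}\}$ for $j\ge1$. Writing $\varphi_j(x)=h_j(2^{-j}x)$, the hypothesis $\br x^\alpha\varphi\in C^s$ shows that each $h_j$ is supported in a fixed ball or annulus with $\norm{h_j}_{C^s}\lesssim2^{-\alpha j}$, uniformly in $j$ — here one uses that $\br x^{-\alpha}$ is Lipschitz with constant $O(2^{-(\alpha+1)j})$ on $\supp\varphi_j$, and that the $C^s$-seminorm acquires a factor $2^{sj}$ under the rescaling by $2^j$. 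Since $\chi_C$ is positively homogeneous of degree $0$, $\chi_C\varphi_j=(\chi_Ch_j)(2^{-j}\,\cdot\,)$, so by the previous step $\norm{\chi_Ch_j}_{H^{\sigma,p}}\lesssim2^{-\alpha j}$ for every admissible $p$, and rescaling the Bessel potential norm yields $\norm{\chi_C\varphi_j}_{H^{\sigma,p}}\lesssim2^{jn/p}2^{-\alpha j}$. Summing over $j$ by the triangle inequality, $V=\sum_j\chi_C\varphi_j$ lies in $H^{\sigma,p}$ whenever $\alpha>n/p$. With $p=r$ this is exactly the hypothesis on $\alpha$, and it gives $V\in H^{\sigma,r}$, proving the first assertion. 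Since also $\alpha>n/r>\tfrac{2n}{n+1}$ and $\sigma<\tfrac2{n+1}$, the same argument with $p=\tfrac{n+1}2$ gives $V\in H^{\sigma,(n+1)/2}$ as well.

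Finally, for the multiplier estimate I would invoke the fractional Leibniz (Kato--Ponce) inequality with the balanced Hölder exponents dictated by $\tfrac1r=\tfrac2{n+1}+\tfrac1{r'}$, namely
\[
\norm{\br D^\sigma(Vf)}_{L^r}\ \lesssim\ \norm{\br D^\sigma V}_{L^{(n+1)/2}}\norm{f}_{L^{r'}}\ +\ \norm{V}_{L^{(n+1)/2}}\norm{\br D^\sigma f}_{L^{r'}}.
\]
Each of the exponents $r$, $r'$ and $\tfrac{n+1}2$ lies strictly between $1$ and $\infty$ for $n\in\{2,3\}$, so this is a legitimate instance of the rule. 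By the previous paragraph $\norm{\br D^\sigma V}_{L^{(n+1)/2}}=\norm{V}_{H^{\sigma,(n+1)/2}}<\infty$, and $\norm{V}_{L^{(n+1)/2}}\lesssim\norm{V}_{H^{\sigma,(n+1)/2}}$ since $\br D^{-\sigma}$ is bounded on $L^{(n+1)/2}$; meanwhile $\norm{f}_{L^{r'}}\le\norm{f}_{H^{\sigma,r'}}$ and $\norm{\br D^\sigma f}_{L^{r'}}=\norm{f}_{H^{\sigma,r'}}$. Hence $\norm{Vf}_{H^{\sigma,r}}\le A\norm{f}_{H^{\sigma,r'}}$ with $A$ a fixed multiple of $\norm{V}_{H^{\sigma,(n+1)/2}}$, which is the claim. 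The step I expect to be the genuine obstacle is the regularity of $\chi_C$: one must establish (or quote) the $B^{1/p}_{p,\infty}$-membership of indicators of Lipschitz epigraphs — including across the vertex, where the epigraph function has a Lipschitz kink — and then the self-similarity of the cone is what makes the unit-scale product estimate above uniform and lets the dyadic sum close. Everything else is bookkeeping, and the numerology is forced: $\alpha>n/r$ is exactly the condition for $V\in L^r$, and $\sigma<\tfrac2{n+1}$ is exactly the admissibility of the product/Leibniz exponents, matching the stated hypotheses $\alpha>5/3$ (resp.\ $9/4$) and $s<1/2$.
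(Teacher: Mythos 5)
Your overall architecture is essentially the paper's: you factor out the decay, establish membership of $V$ in $H^{\sigma,p}$ for the two exponents $p=r$ and $p=\tfrac{n+1}{2}$ (note $\tfrac{n+1}{2}=r'/(r'-2)$, the paper's $\tilde q$), and then close with a bilinear product estimate adapted to $\tfrac1r=\tfrac2{n+1}+\tfrac1{r'}$. Your two substitutions of ingredients are legitimate: quoting the $B^{1/p}_{p,\infty}$-regularity of indicators of Lipschitz epigraphs (provable from the $L^p$-modulus of continuity bound for sets with locally finite perimeter) replaces the paper's explicit Fourier computation for cylinders plus self-similarity, and quoting Kato--Ponce replaces the paper's self-contained bilinear complex interpolation between the H\"older and $H^{1}$ Leibniz endpoints. (A small side issue: your one-line dismissal of $\sigma\le0$ via $\norm{f}_{L^{r'}}\le\norm{f}_{H^{\sigma,r'}}$ is wrong for $\sigma<0$; that case would need duality, though the paper too effectively works with $s-\eps\ge0$.)

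The genuine gap is in the dyadic step. With $\varphi_j(x)=h_j(2^{-j}x)$ one has $h_j(y)=\br{2^jy}^{-\alpha}\,\tilde\varphi(2^jy)\,\theta(y)$ with $\tilde\varphi=\br{x}^{\alpha}\varphi\in C^s$, and the rescaling multiplies the $C^s$-seminorm of the $\tilde\varphi$-factor by $2^{js}$ while the prefactor only supplies $2^{-\alpha j}$; so the correct uniform bound is $\norm{h_j}_{C^s}\lesssim 2^{(s-\alpha)j}$, not $2^{-\alpha j}$ (and this is sharp, e.g.\ for $\tilde\varphi(x)=\abs{\sin x_1}^{s}$, which is admissible). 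Feeding the corrected bound into your summation $\sum_j 2^{jn/p}\norm{h_j}_{C^s}$ forces $\alpha>n/p+s$, i.e.\ $\alpha>5/3+s$ for $n=2$ and $\alpha>9/4+s$ for $n=3$ at $p=r$, which is strictly stronger than the stated hypothesis; your argument as written therefore does not cover, say, $n=2$ with $s$ near $1/2$ and $\alpha$ near $5/3$. The repair is to not rescale the $C^s$ factor at all: peel off $\tilde\varphi$ once, globally, as a pointwise multiplier of $H^{s-\eps,p}$ (Triebel), and run the dyadic/rescaling argument only on $g=\br{x}^{-\alpha}\chi_C$, where the $j$-th rescaled weight $\br{2^j\cdot}^{-\alpha}\theta$ has $C^1$ norm $\lesssim 2^{-\alpha j}$ with no $2^{js}$ penalty, so the sum closes under exactly $\alpha>n/p$ — this is precisely the ordering in the paper's proof. (Alternatively one can salvage your decomposition by splitting the norm into its $L^p$ and homogeneous parts and using the sharper scaling $\norm{u(2^{-j}\cdot)}_{\dot H^{\sigma,p}}\approx 2^{j(n/p-\sigma)}\norm{u}_{\dot H^{\sigma,p}}$, which offsets the $2^{j\sigma}$-loss up to $2^{j\eps}$, but that bookkeeping is exactly what is missing.)
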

\begin{proof}
We write $V = \tilde{\varphi} g$ where 
\begin{align*}
\tilde{\varphi}(x) &= \br{x}^{\alpha} \varphi(x), \\
g(x) &= \br{x}^{-\alpha} \chi_C(x).
\end{align*}
Also write $q = r'$ and $\tilde{q} = \frac{q}{q-2}$, so that 
$$
(q,\ q',\ \tilde{q}) = \left\{ \begin{array}{ll} (6,\ 6/5,\ 3/2) & \text{if }n=2, \\ (4,\ 4/3,\ 2) & \text{if }n=3. \end{array} \right.
$$
We first observe that, by assumption, 
$$
\tilde{\varphi} \in C^s(\mR^n).
$$
Next, note that Proposition \ref{cone-smoothness-with-decay} gives that 
$$
g \in H^{\tau,p}(\mR^n) \text{ if } 1 < p \leq 2,\ \alpha > n/p,\ \tau < 1/2.
$$
The assumption on $\alpha$ implies 
$$
g \in H^{s,\tilde{q}} \cap H^{s,q'}(\mR^n).
$$
Since functions in $C^s(\mR^n)$ act as pointwise multipliers on $H^{s-\eps,p}(\mR^n)$ for any $\eps > 0$ and $1 < p < \infty$ \cite[Section 4.2]{Triebel2}, we have 
$$
V \in H^{s-\eps,\tilde{q}} \cap H^{s-\eps,q'}(\mR^n).
$$
Proposition \ref{general-pointwise-sobolev-multipliers} then implies that 
$$
\norm{Vf}_{H^{s-\eps,q'}} \leq A \norm{f}_{H^{s-\eps,q}}. \qedhere
$$
\end{proof}

\begin{prop}\label{general-pointwise-sobolev-multipliers}
Let $F\in H^{\tau,\widetilde p}(\mathbb R^n)$ where $\tau\in\left[0,1\right]$ and $\widetilde p=p/(p-2)$, and $p\geq2$. Then the pointwise multiplier $T_F=f\longmapsto Ff$ maps
\[T_F\colon H^{\tau,p}(\mathbb R^n)\longrightarrow H^{\tau,p'}(\mathbb R^n)\]
continuously.
\end{prop}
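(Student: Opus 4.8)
The plan is to establish the estimate $\|Ff\|_{H^{\tau,p'}}\lesssim\|F\|_{H^{\tau,\widetilde p}}\|f\|_{H^{\tau,p}}$ (first for $F,f\in\mathscr{S}$, and then by density, since $\mathscr{S}$ is dense in $H^{\sigma,q}$ for $1<q<\infty$) by interpolating between the two endpoints $\tau=0$ and $\tau=1$. Throughout, the arithmetic that makes the spaces match is the relation $1/p'=1/p+1/\widetilde p$, which is immediate from $\widetilde p=p/(p-2)$; I will assume $2<p<\infty$, so that $1<p'<2$ and $1<\widetilde p<\infty$ and the Bessel potential calculus is available in all three scales (the borderline $p=2$, $\widetilde p=\infty$ does not occur in the applications in this paper and can be treated separately, using that $W^{1,\infty}$ consists of bounded Lipschitz functions). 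At the endpoint $\tau=0$ the statement is just H\"older's inequality, $\|Ff\|_{L^{p'}}\le\|F\|_{L^{\widetilde p}}\|f\|_{L^p}$.

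At the endpoint $\tau=1$ I would first recall the identification $H^{1,q}=W^{1,q}$ with equivalent norms, valid for $1<q<\infty$ (a consequence of the Mikhlin multiplier theorem). Then the Leibniz rule $\partial_j(Ff)=(\partial_j F)\,f+F\,(\partial_j f)$ together with H\"older's inequality — applied to each of the two terms via the exponent relation above, using $F,\partial_j F\in L^{\widetilde p}$ and $f,\partial_j f\in L^p$ — gives $\|Ff\|_{W^{1,p'}}\lesssim\|F\|_{W^{1,\widetilde p}}\|f\|_{W^{1,p}}$, which is the $\tau=1$ endpoint.

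To cover the range $0<\tau<1$ I would invoke complex interpolation for bilinear maps (Calder\'on's theorem; see e.g.\ Bergh--L\"ofstr\"om). The bilinear map $B(F,f)=Ff$ is bounded $L^{\widetilde p}\times L^p\to L^{p'}$ and $H^{1,\widetilde p}\times H^{1,p}\to H^{1,p'}$ by the two endpoint steps, hence bounded
\[[L^{\widetilde p},H^{1,\widetilde p}]_\tau\times[L^p,H^{1,p}]_\tau\longrightarrow[L^{p'},H^{1,p'}]_\tau.\]
Since $[L^q,H^{1,q}]_\tau=H^{\tau,q}$ for $1<q<\infty$ and $0\le\tau\le1$ (complex interpolation of Bessel potential spaces), this is exactly the assertion that $T_F\colon H^{\tau,p}\to H^{\tau,p'}$ is bounded with operator norm $\lesssim\|F\|_{H^{\tau,\widetilde p}}$, as claimed.

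The H\"older and Leibniz steps are routine; the one point that needs care is the correct application of the bilinear complex interpolation theorem together with the identification of the intermediate spaces $[L^q,H^{1,q}]_\tau$ as Bessel potential spaces. Both facts are classical, so I expect only bookkeeping rather than a genuine obstacle.
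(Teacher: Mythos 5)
Your proposal is correct and follows essentially the same route as the paper: Hölder at $\tau=0$, the Leibniz rule plus Hölder at $\tau=1$ (using $H^{1,q}=W^{1,q}$), and then Calderón's bilinear complex interpolation theorem together with $[L^q,H^{1,q}]_\tau=H^{\tau,q}$. Your explicit remarks on the identification of the intermediate spaces and on the borderline case $p=2$, $\widetilde p=\infty$ are sensible refinements but do not change the argument.
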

\noindent For the proof of Proposition \ref{general-pointwise-sobolev-multipliers}, we need the following well-known theorem on bilinear complex interpolation.
\begin{thm}\label{bilinear-complex-interpolation}
Let $(A_0,A_1)$, $(B_0,B_1)$, $(C_0,C_1)$ be compatible Banach couples. Assume that
\[T\colon(A_0 \cap A_1)\times(B_0 \cap B_1)\longrightarrow C_0 \cap C_1\]
is bilinear and one has the bounds 
\[
\norm{T(a,b)}_{C_j} \leq M_j \norm{a}_{A_j} \norm{b}_{B_j}
\]
for $a \in A_0 \cap A_1, b \in B_0 \cap B_1, j = 0,1$. Then the operator
\[T\colon\left[A_0,A_1\right]_\vartheta\times\left[B_0,B_1\right]_\vartheta\longrightarrow\left[C_0,C_1\right]_\vartheta\]
is bounded for all $\vartheta\in\left(0,1\right)$ with norm $\leq M_0^{1-\vartheta} M_1^{\vartheta}$, where $\left[\cdot,\cdot\right]_\vartheta$ denotes the usual complex interpolation spaces.
\end{thm}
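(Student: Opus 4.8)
The plan is to deduce this well-known statement from the description of complex interpolation by analytic functions on the strip $S = \{z \in \mC : 0 < \re z < 1\}$. Recall that for a compatible couple $(X_0,X_1)$ one has the Calder\'on space $\mathcal{F}(X_0,X_1)$ of bounded continuous functions $f\colon\overline{S}\to X_0+X_1$ that are analytic in $S$ and for which $t\mapsto f(j+it)$ is continuous from $\mR$ to $X_j$ with $\norm{f(j+it)}_{X_j}\to0$ as $\abs{t}\to\infty$, normed by $\norm{f}_{\mathcal{F}} = \max_{j=0,1}\sup_{t\in\mR}\norm{f(j+it)}_{X_j}$; then $[X_0,X_1]_\vartheta = \{f(\vartheta) : f\in\mathcal{F}(X_0,X_1)\}$ with $\norm{x}_{[X_0,X_1]_\vartheta} = \inf\{\norm{f}_{\mathcal{F}} : f(\vartheta)=x\}$. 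The one nonelementary ingredient I would use is the standard structural fact that $\mathcal{F}(X_0,X_1)$ contains a dense subspace of functions valued in $X_0\cap X_1$ on all of $\overline{S}$ --- for instance the finite sums $f(z) = e^{\delta z^2}\sum_{k}e^{\lambda_k z}a_k$ with $\delta>0$, $\lambda_k\in\mR$ and $a_k\in X_0\cap X_1$. Since the evaluation $f\mapsto f(\vartheta)$ realizes the interpolation norm as an infimum, this yields: for every $x\in[X_0,X_1]_\vartheta$ and $\eps>0$ there is such an intersection-valued $f$ of this elementary form with $f(\vartheta)=x$ and $\norm{f}_{\mathcal{F}}\leq\norm{x}_{[X_0,X_1]_\vartheta}+\eps$.

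Assuming $M_0,M_1>0$ (the remaining cases being trivial, since then $T\equiv0$ on the relevant intersections and the asserted bound reads $0\leq0$), I would first fix $a\in A_0\cap A_1$, $b\in B_0\cap B_1$ and establish the bound for these. Given $\eps>0$, choose intersection-valued elementary $f\in\mathcal{F}(A_0,A_1)$ and $g\in\mathcal{F}(B_0,B_1)$ with $f(\vartheta)=a$, $g(\vartheta)=b$, $\norm{f}_{\mathcal{F}}\leq\norm{a}_{[A_0,A_1]_\vartheta}+\eps$, $\norm{g}_{\mathcal{F}}\leq\norm{b}_{[B_0,B_1]_\vartheta}+\eps$, and set
\[
h(z) = M_0^{z-1}M_1^{-z}\,T\bigl(f(z),g(z)\bigr).
\]
Because $f,g$ are elementary and $T$ is bilinear, $z\mapsto T(f(z),g(z))$ is an entire $(C_0\cap C_1)$-valued function carrying a Gaussian decay factor $e^{\delta'z^2}$ with $\delta'>0$, while $M_0^{z-1}M_1^{-z}=\exp\bigl((z-1)\log M_0-z\log M_1\bigr)$ is entire and bounded on $\overline{S}$; hence $h\in\mathcal{F}(C_0,C_1)$, the Gaussian factor forcing the required decay as $\abs{t}\to\infty$ on the two boundary lines. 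Since $\abs{M_0^{j-1+it}M_1^{-j-it}}=M_0^{j-1}M_1^{-j}$ and $M_0^{j-1}M_1^{-j}M_j=1$ for $j=0$ and $j=1$, the hypothesis $\norm{T(a',b')}_{C_j}\leq M_j\norm{a'}_{A_j}\norm{b'}_{B_j}$ gives
\[
\norm{h(j+it)}_{C_j}\leq M_0^{j-1}M_1^{-j}M_j\,\norm{f(j+it)}_{A_j}\norm{g(j+it)}_{B_j}\leq\norm{f}_{\mathcal{F}}\norm{g}_{\mathcal{F}},
\]
so that $\norm{h}_{\mathcal{F}}\leq\norm{f}_{\mathcal{F}}\norm{g}_{\mathcal{F}}$.

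Now $h(\vartheta)=M_0^{\vartheta-1}M_1^{-\vartheta}T(a,b)$, whence $T(a,b)=M_0^{1-\vartheta}M_1^{\vartheta}h(\vartheta)\in[C_0,C_1]_\vartheta$ and, by the definition of the interpolation norm,
\[
\norm{T(a,b)}_{[C_0,C_1]_\vartheta}\leq M_0^{1-\vartheta}M_1^{\vartheta}\norm{h}_{\mathcal{F}}\leq M_0^{1-\vartheta}M_1^{\vartheta}\bigl(\norm{a}_{[A_0,A_1]_\vartheta}+\eps\bigr)\bigl(\norm{b}_{[B_0,B_1]_\vartheta}+\eps\bigr).
\]
Letting $\eps\to0$ proves the inequality on $(A_0\cap A_1)\times(B_0\cap B_1)$. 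Since for $0<\vartheta<1$ the space $A_0\cap A_1$ is dense in $[A_0,A_1]_\vartheta$ and $B_0\cap B_1$ is dense in $[B_0,B_1]_\vartheta$, while $[C_0,C_1]_\vartheta$ is complete, $T$ extends uniquely to a bounded bilinear map $[A_0,A_1]_\vartheta\times[B_0,B_1]_\vartheta\to[C_0,C_1]_\vartheta$ with norm $\leq M_0^{1-\vartheta}M_1^{\vartheta}$.

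The only genuine obstacle here is the reduction to interpolation functions valued in $A_0\cap A_1$ and $B_0\cap B_1$: a generic $f\in\mathcal{F}(A_0,A_1)$ is merely $(A_0+A_1)$-valued, and the bilinear form $T$ is undefined there, so $T(f(z),g(z))$ could not be formed. Bypassing this requires the density in $\mathcal{F}(X_0,X_1)$ of intersection-valued (indeed elementary) functions, which is the standard lemma of the complex interpolation method (cf.\ Calder\'on, or Bergh--L\"ofstr\"om, Lemma~4.2.3); granting it, what remains is just the Gaussian-regularised three-lines computation above.
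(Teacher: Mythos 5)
The paper itself offers no proof of this theorem: it is invoked as a known result, namely Theorem 4.4.1 of Bergh--L\"ofstr\"om (going back to Calder\'on), so there is nothing internal to compare with. What you write is, in substance, the standard proof from those references: represent $a\in A_0\cap A_1$ and $b\in B_0\cap B_1$ by intersection-valued elementary functions $f,g$ on the strip, form $h(z)=M_0^{z-1}M_1^{-z}\,T(f(z),g(z))$, observe that the endpoint bounds give $\norm{h(j+it)}_{C_j}\leq\norm{f}_{\mathcal{F}}\norm{g}_{\mathcal{F}}$ on the two boundary lines, evaluate at $\vartheta$, and then extend $T$ using the density of $A_0\cap A_1$ in $[A_0,A_1]_\vartheta$ (likewise for $B$) together with the completeness of $[C_0,C_1]_\vartheta$. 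You also correctly isolate the one genuinely delicate point: $T(f(z),g(z))$ is only meaningful when $f$ and $g$ take values in the intersections.

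The caveat concerns how you dispose of that point. Your claim that every $x\in[X_0,X_1]_\vartheta$ admits an elementary $f$ with $f(\vartheta)=x$ exactly and $\norm{f}_{\mathcal{F}}\leq\norm{x}_{[X_0,X_1]_\vartheta}+\eps$ cannot be right as stated, since elementary functions take values in $X_0\cap X_1$; it can only hold for $x\in X_0\cap X_1$, which is admittedly the only case you use. Even there it does not follow in one line from the density of the elementary functions in $\mathcal{F}$: an elementary approximant $g$ of a near-optimal $f$ satisfies $g(\vartheta)\approx a$ only in the interpolation norm, and the error $a-g(\vartheta)$, although it lies in $A_0\cap A_1$, has no small intersection norm, so it cannot simply be reinstated by adding $e^{\delta(z^2-\vartheta^2)}(a-g(\vartheta))$; nor can you instead pass to the limit in $T(g(\vartheta),\cdot)$, because continuity of $T$ with respect to the interpolation norms is precisely what is being proved. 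The statement you actually need --- that for $a\in A_0\cap A_1$ the infimum defining $\norm{a}_{[A_0,A_1]_\vartheta}$ may be computed over intersection-valued (elementary) competitors taking the exact value $a$ at $\vartheta$ --- is true and standard, and it is the form in which the approximation lemma is used in the cited proof of Theorem 4.4.1; but it should be quoted in that form (or given its own short argument), not presented as an immediate consequence of plain density.
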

\noindent
This is a special case of e.g.\ Theorem 4.4.1 in \cite{Bergh--Lofstrom}, see also the original theorem due to Calder\'on \cite{Calderon-on-interpolation}.

\begin{proof}[Proof of Proposition \ref{general-pointwise-sobolev-multipliers}.]
We first show that if $f\in L^p(\mathbb R^n)$ and $g\in L^{\widetilde p}(\mathbb R^n)$, where $p\geq2$ and $\widetilde p=p/(p-2)$, then
\begin{equation}\label{multiplication-star}\tag{\textasteriskcentered}
fg\in L^{p'}(\mathbb R^n)\quad\text{and}\quad\bigl\|fg\bigr\|_{L^{p'}(\mathbb R^n)}\leq\bigl\|f\bigr\|_{L^p(\mathbb R^n)}\bigl\|g\bigr\|_{L^{\widetilde p}(\mathbb R^n)}.
\end{equation}
We use H\"older's inequality with $q=p/p'=p-1$ and $q'=(p-1)/(p-2)$, so that
\[\int_{\mathbb R^n}\left|fg\right|^{p'}
\leq\left(\;\int_{\mathbb R^n}\left|f\right|^{p'q}\right)^{\!\!1/q}\left(\;\int_{\mathbb R^n}\left|g\right|^{p'q'}\right)^{\!\!1/q'}.\]
Now $p'q=p$ and $p'q'=\widetilde p$ and \eqref{multiplication-star} follows.

Write $T(f,g)=fg$, so that $T_F(f)=T(f,F)$. We show that
\begin{itemize}
\item[a)] $T\colon L^p(\mathbb R^n)\times L^{\widetilde p}(\mathbb R^n)\longrightarrow L^{p'}(\mathbb R^n)$ and
\item[b)] $T\colon H^{1,p}(\mathbb R^n)\times H^{1,\widetilde p}(\mathbb R^n)\longrightarrow H^{1,p'}(\mathbb R^n)$
\end{itemize}
continuously, and the the claim of Proposition \ref{general-pointwise-sobolev-multipliers} follows from Theorem \ref{bilinear-complex-interpolation}.
But a) is just \eqref{multiplication-star} and if $f\in H^{1,p}(\mathbb R^n)$ and $g\in H^{1,\widetilde p}(\mathbb R^n)$, then we have
\[\nabla(fg)=(\nabla f)g+f(\nabla g),\]
and b) follows from \eqref{multiplication-star}.
\end{proof}

\begin{prop}\label{cone-smoothness-with-decay}
Let $c > 0$ and let $C$ be the circular cone
\[C=\bigl\{ (x',x_n) \in \mR^{n-1} \times \mR \bigm| \abs{x'}\leq cx_n \bigr\}.\]
Then $\br{x}^{-\alpha} \chi_C(x)$ belongs to $H^{\tau,p}(\mR^n)$ if $1 < p \leq 2,\ \alpha > n/p,\ \tau < 1/2$.
\end{prop}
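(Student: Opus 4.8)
The plan is to deduce the statement from a fractional-Sobolev estimate combined with standard embeddings of function spaces. Write $g(x)=\br{x}^{-\alpha}\chi_C(x)$. Since $\alpha>n/p$ forces $g\in L^p(\mR^n)$, the case $\tau\le 0$ is immediate, so fix $0<\tau<1/2$. As $\tau$ is not an integer, the Sobolev--Slobodeckij space $W^{\tau,p}$ coincides with the Besov space $B^\tau_{p,p}$, and $B^\tau_{p,p}\hookrightarrow F^\tau_{p,2}=H^{\tau,p}$ for $1<p\le 2$ (see \cite{Triebel2,Bergh--Lofstrom}); hence it is enough to show the Gagliardo seminorm
\[[g]_{W^{\tau,p}}^p=\int_{\mR^n}\abs h^{-n-\tau p}\,\norm{g(\cdot+h)-g}_{L^p(\mR^n)}^p\,dh\]
is finite. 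I would do this by first establishing the translation bound
\[\norm{g(\cdot+h)-g}_{L^p(\mR^n)}^p\lesssim\min(1,\abs h),\qquad h\in\mR^n,\]
after which passing to polar coordinates shows that the seminorm integral converges exactly when $0<\tau p<1$, which in particular covers $0<\tau<1/2$ (indeed all $\tau<1/p$).

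For $\abs h\ge 1$ the translation bound is trivial from $\norm{g(\cdot+h)-g}_{L^p}\le 2\norm g_{L^p}$, so assume $\abs h\le 1$ and split
\[g(x+h)-g(x)=\br{x+h}^{-\alpha}\bigl(\chi_C(x+h)-\chi_C(x)\bigr)+\bigl(\br{x+h}^{-\alpha}-\br{x}^{-\alpha}\bigr)\chi_C(x).\]
For the second term the mean value theorem gives $\abs{\br{x+h}^{-\alpha}-\br{x}^{-\alpha}}\lesssim\abs h\,\br{x}^{-\alpha-1}$, whose $L^p$ norm is $\lesssim\abs h$ since $(\alpha+1)p>n$. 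The first term is supported in the symmetric difference $(C-h)\triangle C$; if $x$ lies there, the segment from $x$ to $x+h$ meets $\partial C$, so $\mathrm{dist}(x,\partial C)\le\abs h$. Hence its $p$-th power in $L^p$ is at most $\int_{T_{\abs h}}\br{x+h}^{-\alpha p}\,dx$, where $T_\delta=\{x:\mathrm{dist}(x,\partial C)\le\delta\}$, and the whole problem reduces to estimating this integral.

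The heart of the argument is exploiting the conical structure here. On $\{\abs x<2\abs h\}$ the integrand is $\le 1$ and the region has measure $\lesssim\abs h^n\le\abs h$. On $\{\abs x\ge 2\abs h\}$ one has $\br{x+h}\gtrsim\br{x}$, and I would sum over dyadic annuli $A_j=\{2^j\le\abs x<2^{j+1}\}$ with $2^j\gtrsim\abs h$. The scale invariance $\lambda\,\partial C=\partial C$ gives $T_\delta=\lambda\,T_{\delta/\lambda}$, whence $\abs{T_\delta\cap A_j}=2^{jn}\abs{T_{\delta 2^{-j}}\cap\{1\le\abs x<2\}}\lesssim\delta\,2^{j(n-1)}$ whenever $\delta\lesssim 2^j$, using that away from the vertex $\partial C$ is a compact piece of a smooth hypersurface whose width-$\eta$ tube has volume $\lesssim\eta$. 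Multiplying by $\br{x}^{-\alpha p}\sim 2^{-j\alpha p}$ for $2^j\ge 1$ (and $\sim 1$ for $2^j\le 1$) and summing, the part $2^j\le 1$ is a convergent geometric series while the part $2^j\ge 1$ converges because $\alpha p>n>n-1$; this yields $\int_{T_{\abs h}\cap\{\abs x\ge 2\abs h\}}\br{x+h}^{-\alpha p}\,dx\lesssim\abs h$, and combining the three contributions gives the translation bound.

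I expect the genuinely delicate point to be the scaling/dyadic estimate $\abs{T_\delta\cap A_j}\lesssim\delta\,2^{j(n-1)}$, i.e.\ quantifying uniformly in scale how much mass the symmetric difference of the cone and its translate accumulates near the vertex; once this is in hand, the smooth-weight term, the dyadic summation, and the Besov embedding are routine.
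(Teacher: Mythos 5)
Your proof is correct, but it takes a genuinely different route from the paper's. The paper proceeds in three steps: it first computes the Fourier transform of the characteristic function of a finite cylinder explicitly (Proposition \ref{cylinder-smoothness}; Bessel asymptotics give decay $\br{\xi'}^{-n/2}\br{\xi_n}^{-1}$, hence membership in $H^{\tau,2}$ for $\tau<1/2$, transferred to $1<p\leq 2$ by a compactly supported cutoff and interpolation), then assembles the truncated cone from dyadically rescaled cylinder-like pieces using the dilation estimate for $H^{\tau,p}$ norms (Propositions \ref{cone-smoothness} and \ref{sobolev-dilation}), and finally treats the weight $\br{x}^{-\alpha}$ by a second dyadic decomposition at large scales, where $\alpha>n/p$ makes the resulting series geometric. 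You instead work entirely on the physical side: you bound the $L^p$ modulus of continuity of $\br{x}^{-\alpha}\chi_C$ by $\min(1,\abs{h})^{1/p}$, using the exact scale invariance of $\partial C$ to get the tube estimate $\abs{T_\delta\cap A_j}\lesssim\delta\,2^{j(n-1)}$, conclude finiteness of the Gagliardo seminorm whenever $\tau p<1$, and then invoke $W^{\tau,p}=B^{\tau}_{p,p}\hookrightarrow F^{\tau}_{p,2}=H^{\tau,p}$, which is valid precisely because $p\leq 2$. Both arguments are complete; yours is more self-contained (no Bessel functions, no bilinear or dilation interpolation lemmas, and the weight and the cone are handled in one pass) and even yields the slightly larger range $\tau<1/p$ when $p<2$, while the paper's Fourier computation makes the origin of the $1/2$ threshold (the jump across the smooth part of $\partial C$, visible as the $\br{\xi_n}^{-1}$ factor) more transparent, and its scaling machinery is set up so that the weighted large-scale decomposition reuses it directly. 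Two small points you leave implicit are fine in context: the tube bound $\abs{T_\eta\cap\{1\leq\abs{x}<2\}}\lesssim\eta$ is needed only uniformly for $\eta\lesssim 1$, which your constraint $\delta\lesssim 2^j$ guarantees, and the convergence of the small-scale sum $\sum_{2^j\leq 1}2^{j(n-1)}$ uses $n\geq 2$, which is the only case relevant here.
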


\begin{prop}\label{cone-smoothness}
Let $c>0$ and let $C$ be the circular cone
\[C=\bigl\{ ( x',x_n ) \in\mathbb R^{n-1}\times\mathbb R\bigm|\text{$\left|x'\right|\leq cx_n$ and $x_n\leq1$}\bigr\}.\]
Then $\chi_C\in H^{\tau,p}(\mathbb R^n)$ for $\tau\in\left[0,1/2\right)$ and $p\in\left(1,2\right]$.
\end{prop}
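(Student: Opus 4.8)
The plan is to reduce the statement to a standard integrability estimate for the reciprocal distance to $\partial C$. Since $H^{0,p}=L^p$ and $C$ is bounded — the conditions $\abs{x'}\le cx_n$ and $x_n\le1$ force $0\le x_n\le1$ and $\abs{x'}\le c$ — the case $\tau=0$ is immediate, so I would fix $0<\tau<1/2$. The key observation is that, because $p\le2$, the Sobolev--Slobodeckij space $W^{\tau,p}(\mR^n)$ (i.e.\ $B^\tau_{p,p}(\mR^n)$, normed by the $L^p$ norm plus the Gagliardo seminorm) embeds continuously into the Bessel potential space $H^{\tau,p}(\mR^n)=F^\tau_{p,2}(\mR^n)$; see e.g.\ \cite{Triebel2}. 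Hence it suffices to prove that
\[
[\chi_C]_{W^{\tau,p}}^p=\int_{\mR^n}\int_{\mR^n}\frac{\abs{\chi_C(x)-\chi_C(y)}^p}{\abs{x-y}^{n+\tau p}}\,dx\,dy<\infty ,
\]
the $L^p$ part of the norm being finite since $C$ is bounded.

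Next I would use the elementary fact that $\chi_C$ takes only the values $0$ and $1$, so the double integral equals $2\int_C\int_{\mR^n\setminus C}\abs{x-y}^{-n-\tau p}\,dy\,dx$. For a.e.\ $x\in C$ one has $\mathrm{dist}(x,\mR^n\setminus C)=\mathrm{dist}(x,\partial C)=:\delta(x)>0$, so the inner integral is taken over a set where $\abs{x-y}\ge\delta(x)$; passing to polar coordinates and using that $\tau p>0$ makes the radial integral $\int_{\delta(x)}^{\infty}\rho^{-1-\tau p}\,d\rho$ converge, I obtain $\int_{\mR^n\setminus C}\abs{x-y}^{-n-\tau p}\,dy\lesssim\delta(x)^{-\tau p}$, with an implied constant depending only on $n$ and $\tau p$. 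Thus everything is reduced to showing $\int_C\delta(x)^{-\tau p}\,dx<\infty$.

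For the last step, the point is that $C=\{\abs{x'}\le cx_n\}\cap\{x_n\le1\}$ is convex: the first set is an intersection of the half-spaces $\{x'\cdot\omega\le cx_n\}$, $\omega\in S^{n-2}$, and the second is a half-space. Being a bounded convex body, $C$ is in particular a bounded Lipschitz domain, and for any such domain $\int_C\mathrm{dist}(x,\partial C)^{-a}\,dx<\infty$ whenever $a<1$ (cover $\partial C$ by finitely many balls in which it is a Lipschitz graph and compare $\mathrm{dist}(x,\partial C)$ with the vertical distance to the graph, which is integrable to the power $-a$ since $a<1$). Because $\tau<1/2$ and $p\le2$ give $\tau p<1$, this yields $\int_C\delta(x)^{-\tau p}\,dx<\infty$, hence $\chi_C\in W^{\tau,p}\hookrightarrow H^{\tau,p}$.

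The argument is essentially routine; the two places that deserve care are the \emph{direction} of the Besov--Triebel--Lizorkin embedding, which is precisely where the hypothesis $p\le2$ is used, and the integrability of $\mathrm{dist}(\cdot,\partial C)^{-\tau p}$ over $C$. The convexity of the truncated solid cone is convenient for the latter, since it makes the Lipschitz property of $C$ immediate — in particular near the vertex, where singular behaviour of the distance function might otherwise be a concern. If one preferred not to quote the Besov embedding, one could instead verify $\chi_C\in H^{\tau,p}$ directly via the same distance estimate together with a Littlewood--Paley or interpolation argument, but passing through $W^{\tau,p}$ seems cleanest.
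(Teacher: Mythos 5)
Your argument is correct, but it proceeds along a genuinely different route from the paper's. The paper first computes the Fourier transform of the characteristic function of a finite cylinder explicitly (Bessel function asymptotics for the ball cross-section times $\sin\xi_n/\xi_n$), obtains $H^{\tau,2}$ directly from Plancherel, passes to $H^{\tau,p}$, $1<p\leq 2$, by a compactly supported cutoff, H\"older and interpolation, and then treats the cone by slicing it into dyadic frusta $C(2^{-j},2^{1-j})$, which are rescalings of one fixed piece, summing the resulting series with the dilation estimate for Sobolev norms (Proposition \ref{sobolev-dilation}). You instead bound the Gagliardo seminorm: since $\chi_C$ takes values in $\{0,1\}$, the $W^{\tau,p}$ seminorm reduces to $\int_C\mathrm{dist}(x,\partial C)^{-\tau p}\,dx$, which is finite because $C$ is a bounded convex body (hence Lipschitz) and $\tau p<1$, and you then invoke $W^{\tau,p}=B^{\tau}_{p,p}=F^{\tau}_{p,p}\hookrightarrow F^{\tau}_{p,2}=H^{\tau,p}$, valid precisely for $p\leq 2$ — all of these steps check out, including the identification of where $p\leq2$ and $\tau<1/2$ enter. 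Your route is shorter and more robust: it uses nothing about the circular or conical structure (any bounded Lipschitz domain would do, and for $p<2$ it even gives the larger range $\tau<1/p$), and it avoids the slightly delicate reduction of the frustum $C(1/2,1)$ to a cylinder. What the paper's approach buys in exchange is self-containedness at the level of explicit Fourier computations (which also exhibit the sharpness of the $1/2$ threshold at $p=2$) and, more importantly, the scaling machinery of Proposition \ref{sobolev-dilation}, which is reused immediately afterwards in the proof of Proposition \ref{cone-smoothness-with-decay} to handle the weight $\br{x}^{-\alpha}$ on the infinite cone; if you adopted your proof, that subsequent argument would still need the dilation lemma or a separate treatment of the unbounded part.
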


It is useful to first consider the easier case of characteristic functions of finite straight cylinders.
\begin{prop}\label{cylinder-smoothness}
Let $C'$ be the finite straight cylinder
\[C'=\bigl\{ (x',x_n) \in\mathbb R^{n-1}\times\mathbb R\bigm|\text{$\left|x'\right|\leq1$ and $\left|x_n\right|\leq1$}\bigr\},\]
where $n\geq2$.
Then the characteristic function $\chi_{C'}$ belongs to $H^{\tau,p}(\mathbb R^n)$ for all $\tau\in\left[0,1/2\right)$ and $p\in\left(1,2\right]$.
\end{prop}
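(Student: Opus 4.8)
The plan is to reduce the claim to a fractional Sobolev (Gagliardo) seminorm estimate and then pass to the Bessel potential scale by a standard embedding. Since $\chi_{C'}$ is bounded with compact support it lies in $L^p(\mathbb R^n)=H^{0,p}(\mathbb R^n)$, so only $\tau\in(0,1/2)$ requires work. For such $\tau$ and $p\in(1,2]$ I would use that the fractional Sobolev space carrying the norm $\norm{f}_{L^p}+\bigl(\int\!\!\int\abs{f(x)-f(y)}^p\abs{x-y}^{-n-\tau p}\,dx\,dy\bigr)^{1/p}$ coincides with the Besov space $B^\tau_{p,p}(\mathbb R^n)=F^\tau_{p,p}(\mathbb R^n)$, and that $F^\tau_{p,p}(\mathbb R^n)\hookrightarrow F^\tau_{p,2}(\mathbb R^n)=H^{\tau,p}(\mathbb R^n)$ because $p\le 2$ (standard embedding theorems for Besov and Triebel--Lizorkin spaces, see e.g.\ \cite{Triebel2}). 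Thus it suffices to show that $\chi_{C'}\in W^{\tau,p}(\mathbb R^n)$, i.e.\ that the Gagliardo seminorm is finite, whenever $\tau p<1$; this is automatic here since $\tau<1/2$ and $p\le 2$.

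To estimate the seminorm I would substitute $y=x+h$ to rewrite it as $\int_{\mathbb R^n}\abs{h}^{-n-\tau p}\norm{\chi_{C'}-\chi_{C'}(\cdot+h)}_{L^p}^p\,dh$ and observe that, the integrand being $0$ or $1$ pointwise, $\norm{\chi_{C'}-\chi_{C'}(\cdot+h)}_{L^p}^p=\abs{C'\triangle(C'-h)}$. Writing $\chi_{C'}(x',x_n)=\chi_{B'}(x')\chi_{[-1,1]}(x_n)$ with $B'$ the closed unit ball of $\mathbb R^{n-1}$, the elementary inequality $\abs{ab-a'b'}\le\abs{a}\abs{b-b'}+\abs{b'}\abs{a-a'}$ together with Fubini gives, for $h=(h',h_n)$,
$\abs{C'\triangle(C'-h)}\le \abs{[-1,1]}\,\abs{B'\triangle(B'-h')}+\abs{B'}\,\bigl|[-1,1]\triangle([-1,1]-h_n)\bigr|$. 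Each symmetric difference on the right is $O(\min(\abs{h},1))$: for the interval this is immediate, and for the ball the symmetric difference is contained in an $O(\abs{h'})$-neighbourhood of $\partial B'$ when $\abs{h'}\le 1$ and is trivially bounded by $2\abs{B'}$ otherwise. Hence $\norm{\chi_{C'}-\chi_{C'}(\cdot+h)}_{L^p}^p\lesssim\min(\abs{h},1)$ with a constant independent of $p$, and splitting the $h$-integral at $\abs{h}=1$ yields $\int_{\abs{h}\le1}\abs{h}^{1-n-\tau p}\,dh+\int_{\abs{h}>1}\abs{h}^{-n-\tau p}\,dh$, where the first integral converges precisely because $\tau p<1$ and the second because $\tau p>0$. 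Together with $\chi_{C'}\in L^p$ this gives $\chi_{C'}\in W^{\tau,p}(\mathbb R^n)$, and the embedding above finishes the proof.

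The argument is almost entirely routine; the one point needing care is the chain of function-space identifications, since $W^{\tau,p}$ and $H^{\tau,p}$ genuinely differ for fractional $\tau$ when $p\ne 2$, and one must use $p\le 2$ to land inside $H^{\tau,p}$. The geometric input---that $\abs{C'\triangle(C'-h)}=O(\abs{h})$ for small $h$---is trivial here because of the product structure of the cylinder, but it is exactly the analogue of this estimate, now for the cone whose cross-section varies with $x_n$ and whose vertex must be treated separately, that will be the substantive issue in Propositions \ref{cone-smoothness} and \ref{cone-smoothness-with-decay}.
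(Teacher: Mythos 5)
Your argument is correct, but it takes a genuinely different route from the paper. The paper exploits the product structure $\chi_{C'}(x',x_n)=\chi_B(x')\chi_{[-1,1]}(x_n)$ on the Fourier side: it computes $\widehat{\chi_B}$ explicitly in terms of Bessel functions, reads off the decay $\langle\xi'\rangle^{-n/2}\langle\xi_n\rangle^{-1}$, concludes $\chi_{C'}\in H^{\tau,2}$ for $\tau<1/2$ by Plancherel, and then passes from $p=2$ to $1<p\le 2$ by a compactly supported cutoff, H\"older's inequality, and complex interpolation. You instead work on the physical side: the translation estimate $\lVert\chi_{C'}-\chi_{C'}(\cdot+h)\rVert_{L^p}^p=\lvert C'\triangle(C'-h)\rvert\lesssim\min(\lvert h\rvert,1)$ gives $\chi_{C'}\in W^{\tau,p}=B^\tau_{p,p}$ whenever $\tau p<1$, and the chain $B^\tau_{p,p}=F^\tau_{p,p}\hookrightarrow F^\tau_{p,2}=H^{\tau,p}$, valid precisely because $p\le 2$, finishes the proof; you rightly single out this identification as the only place where $p\le 2$ is genuinely used, and all the pieces you invoke are standard (see \cite{Triebel2}). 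The paper's route is more self-contained (only Bessel asymptotics and H\"older) and makes the Fourier decay, hence the role of the threshold $\tau<1/2$, completely explicit; your route leans on heavier function-space machinery but needs almost no geometry beyond a perimeter-type bound on $\lvert C'\triangle(C'-h)\rvert$, and consequently it generalizes verbatim to any bounded Lipschitz (indeed finite-perimeter) set --- in particular it would give Proposition \ref{cone-smoothness} directly, bypassing the dyadic decomposition and the scaling estimate of Proposition \ref{sobolev-dilation} that the paper uses to handle the cone with its vertex.
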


\begin{proof}
The point is that we can compute the Fourier transform of $\chi_{C'}$ explicitly. First, since $\chi_{C'}\in L^1(\mathbb R^n)$, the Fourier transform $\widehat{\chi_{C'}}$ is continuous, and there are no singularities. Thus, only the decay of $\widehat{\chi_{C'}}$ needs to be considered.

Next, using the usual radial Fourier transform (see e.g.\ Sect.\ {\S}IV.3 in \cite{Stein--Weiss}), and the fact that
\[\frac d{dx}(x^\nu\,J_\nu(x))=x^\nu\,J_{\nu-1}(x),\]
(see e.g.\ \cite{Lebedev}, Section 5.2), we can compute the Fourier transform of the characteristic function of the ball $B = B^{m}(0,1)\subset\mathbb R^{m}$ for $m \geq 2$,
\begin{align*}
\widehat{\chi_B}(\xi')&=\int_{\abs{x'} < 1} e^{-ix' \cdot \xi'} \,dx' = (2\pi)^{m/2}\int_0^1J_{m/2-1}(\left|\xi'\right|s)\,s^{m/2}\, ds\\
&=(2\pi)^{m/2}\left|\xi'\right|^{-m/2}\int_0^1(\left|\xi'\right|s)^{m/2}\,J_{m/2-1}(\left|\xi'\right|s)\, ds\\
&=(2\pi)^{m/2}\left|\xi'\right|^{-m/2}\left.\frac{(\left|\xi'\right|s)\,J_{m/2}(\left|\xi'\right|s)}{\left|\xi'\right|}\right]_0^{s=1}\\
&=(2\pi)^{m/2}\left|\xi'\right|^{-m/2}\,J_{m/2}(\left|\xi'\right|)
\end{align*}
for all $\xi'\in\mathbb R^{m}\setminus0$.
By the asymptotics of $J$-Bessel functions (see e.g.\ \cite{Lebedev}, Section 5.11), this is $\lesssim\left\langle\xi'\right\rangle^{-m/2-1/2}$ for large $\left|\xi'\right|$. Similarly, for $\xi \in\mathbb R\setminus0$, we get
\[\widehat{\chi_{\left[-1,1\right]}}(\xi)
=\frac{2\sin\xi}{\xi},\]
and this is $\lesssim\left\langle\xi\right\rangle^{-1}$ for large $\xi$.

Finally, since $\chi_{C'}(x',x_n) = \chi_{B}(x') \chi_{[-1,1]}(x_n)$ where $B$ is the unit ball in $\mR^{n-1}$, we have 
$$
\abs{\widehat{\chi_{C'}}(\xi',\xi_n)} = \abs{\widehat{\chi_B}(\xi')} \, \abs{\widehat{\chi_{[-1,1]}}(\xi_n)} \lesssim \br{\xi'}^{-n/2} \br{\xi_n}^{-1}.
$$
Thus we can estimate
\begin{align*}
\bigl\|\chi_{C'}\bigr\|_{H^{\tau,2}(\mathbb R^n)}^2
&=\int_{\mathbb R^n}\left\langle\xi\right\rangle^{2\tau}\left|\widehat{\chi_{C'}}(\xi',\xi_n)\right|^2 d\xi'\, d\xi_n\\
&\lesssim\int_{\mathbb R^{n-1}}\left\langle\xi'\right\rangle^{2\tau}\left\langle\xi'\right\rangle^{-n} d\xi'\int_{\mathbb R}\left\langle\xi_n\right\rangle^{2\tau}\left\langle\xi_n\right\rangle^{-2} d\xi_n,
\end{align*}
and this product is finite for any $\tau<1/2$, so $\chi_{C'} \in H^{\tau,2}$ for $\tau < 1/2$.

More generally, choose a fixed $\varphi \in C^{\infty}_c(\mR^n)$ such that $\chi_{C'} = \varphi \chi_{C'}$, and note that for $1 < p \leq 2$ we have by the H\"older inequality 
$$
\norm{\varphi f}_{H^{k,p}} \leq C_{\varphi} \norm{f}_{H^{k,2}}, \qquad k = 0,1.
$$
By interpolation, multiplication by $\varphi$ maps $H^{s,2}$ to $H^{s,p}$ for $0 < s < 1$, so we have $\chi_{C'} \in H^{\tau,p}$ for $\tau < 1/2$ and $1 < p \leq 2$.
%More generally, $\Phi=\left\langle\cdot\right\rangle^\tau\widehat{\chi_{C'}}$ belongs to $L^2(\mathbb R^n)\cap C^\infty(\mathbb R^n)$, so that we may estimate
%\begin{align*}
%&\bigl\|\chi_{C'}\bigr\|_{H^{\tau,p}(\mathbb R^n)}^p
%=\int\limits_{\mathbb R^n}\bigl|F^{-1}\Phi(x)\bigr|^p\,\mathrm dx\\
%&\lesssim\left(\;\int\limits_{\mathbb R^n}\left\langle x\right\rangle^{-n-1}\mathrm dx\right)^{(2-p)/2}\left(\;\int\limits_{\mathbb R^n}\left\langle x\right\rangle^{n+1}\left|\Phi(x)\right|^2\mathrm dx\right)^{p/2}<\infty.
%\end{align*}
\end{proof}

\begin{proof}[Proof of Proposition \ref{cone-smoothness}]
So, let $\tau<1/2$. Through a simple change of variables, it is enough to consider the cone
\[C=\bigl\{ (x',x_n) \in\mathbb R^{n-1}\times\mathbb R\bigm|\left|x'\right|\leq x_n\leq1\bigr\}.\]
We shall break this cone into pieces of the form
\[C(\alpha,\beta)=\bigl\{ (x',x_n) \in C\bigm|\alpha\leq x_n\leq\beta\bigr\},\]
where $\alpha,\beta\in\mathbb R_+$. By a simple change of variables, Prop.\ \ref{cylinder-smoothness} shows that $\chi_{C(1/2,1)}\in H^{\tau,p}(\mathbb R^n)$.
Since
\[\chi_{C(1/4,1/2)}=\chi_{C(1/2,1)}(2\cdot),\quad\chi_{C(1/8,1/4)}=\chi_{C(1/2,1)}(4\cdot),\quad\ldots,\]
we also have
\[\chi_{C(1/2,1)},\chi_{C(1/4,1/2)},\chi_{C(1/8,1/4)},\ldots\in H^{\tau,p}(\mathbb R^n),\]
and
\[\chi_C=\sum_{j=1}^\infty\chi_{C(2^{-j},2^{1-j})},\]
in the sense of distributions, and so it suffices to prove that this series converges in $H^{\tau,p}(\mathbb R^n)$. Finally, this follows from the scaling of the Sobolev norm, see Proposition \ref{sobolev-dilation} below.
\end{proof}

\begin{prop}\label{sobolev-dilation}
Let $\tau\in\left[0,1\right]$, $p\in\left(1,\infty\right)$, and let $\psi\in H^{\tau,p}(\mathbb R^n)$. Then, for $\lambda\in\left[1,\infty\right)$,
\[\bigl\|\psi(\lambda\cdot)\bigr\|_{H^{\tau,p}(\mathbb R^n)}\lesssim\lambda^{\tau-n/p}\bigl\|\psi\bigr\|_{H^{\tau,p}(\mathbb R^n)},\]
and for $\lambda\in\left(0,1\right]$,
\[\bigl\|\psi(\lambda\cdot)\bigr\|_{H^{\tau,p}(\mathbb R^n)}\lesssim\lambda^{-n/p}\bigl\|\psi\bigr\|_{H^{\tau,p}(\mathbb R^n)}.\]
\end{prop}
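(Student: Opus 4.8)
The plan is to reduce to the endpoint cases $\tau = 0$ and $\tau = 1$ and then interpolate. Throughout, write $\delta_\lambda$ for the dilation operator $\delta_\lambda\psi = \psi(\lambda\,\cdot\,)$.

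For $\tau = 0$ one has $H^{0,p} = L^p$, and a change of variables gives the exact identity $\norm{\delta_\lambda\psi}_{L^p} = \lambda^{-n/p}\norm{\psi}_{L^p}$, which is the claimed bound in both regimes. For $\tau = 1$ I would use that $\norm{\br{D}\psi}_{L^p}$ is equivalent (for $1 < p < \infty$, by the Mikhlin multiplier theorem) to $\norm{\psi}_{L^p} + \norm{\nabla\psi}_{L^p}$. Since $\nabla(\delta_\lambda\psi) = \lambda\,\delta_\lambda(\nabla\psi)$, the $\tau = 0$ computation then yields $\norm{\delta_\lambda\psi}_{H^{1,p}} \lesssim \lambda^{-n/p}\norm{\psi}_{L^p} + \lambda^{1-n/p}\norm{\nabla\psi}_{L^p}$. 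For $\lambda \geq 1$ the factor $\lambda^{1-n/p}$ dominates, so $\norm{\delta_\lambda\psi}_{H^{1,p}} \lesssim \lambda^{1-n/p}\norm{\psi}_{H^{1,p}}$; for $\lambda \leq 1$ the factor $\lambda^{-n/p}$ dominates, so $\norm{\delta_\lambda\psi}_{H^{1,p}} \lesssim \lambda^{-n/p}\norm{\psi}_{H^{1,p}}$.

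To pass to general $\tau \in (0,1)$ I would invoke the classical identification $[L^p, H^{1,p}]_\tau = H^{\tau,p}$ with equivalent norms (valid for $1 < p < \infty$; see e.g.\ \cite{Bergh--Lofstrom}, \cite{Triebel2}) together with the complex interpolation theorem for a single linear operator bounded on both endpoint spaces, which gives
$$\norm{\delta_\lambda}_{H^{\tau,p} \to H^{\tau,p}} \lesssim \norm{\delta_\lambda}_{L^p \to L^p}^{1-\tau}\,\norm{\delta_\lambda}_{H^{1,p} \to H^{1,p}}^{\tau}.$$
Substituting the endpoint bounds: for $\lambda \geq 1$ this is $\lesssim (\lambda^{-n/p})^{1-\tau}(\lambda^{1-n/p})^{\tau} = \lambda^{\tau - n/p}$, and for $\lambda \leq 1$ it is $\lesssim (\lambda^{-n/p})^{1-\tau}(\lambda^{-n/p})^{\tau} = \lambda^{-n/p}$, which are exactly the two asserted estimates (and in particular $\delta_\lambda\psi \in H^{\tau,p}$ whenever $\psi \in H^{\tau,p}$).

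I do not expect a genuine obstacle here: the argument is essentially bookkeeping of scaling exponents, the only non-elementary inputs being the standard interpolation identity $[L^p, H^{1,p}]_\tau = H^{\tau,p}$ and the Mikhlin-type norm equivalence on $H^{1,p}$, both classical. The one point to be careful about is that the interpolated bound must be applied separately in the regimes $\lambda \geq 1$ and $\lambda \leq 1$, since in each regime a different endpoint exponent is the larger one — this is exactly why the final statement splits into two cases.
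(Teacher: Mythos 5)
Your proposal is correct and follows essentially the same route as the paper: verify the exact scaling identity on $L^p$ for $\tau=0$, handle $\tau=1$ via the equivalence of the $H^{1,p}$ norm with $\norm{\psi}_{L^p}+\norm{\nabla\psi}_{L^p}$ and the chain rule, then obtain the intermediate $\tau$ by complex interpolation, splitting into the regimes $\lambda\geq 1$ and $\lambda\leq 1$. The only difference is that you make explicit the identification $[L^p,H^{1,p}]_\tau = H^{\tau,p}$ and the operator-norm interpolation, which the paper leaves implicit.
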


\begin{proof}
The result follows from complex interpolation once the cases $\tau=0$ and $\tau=1$ have been dealt with. The case $\tau=0$ follows immediately from
\[\bigl\|\psi(\lambda\cdot)\bigr\|_{L^p(\mathbb R^n)}=\lambda^{-n/p}\bigl\|\psi\bigr\|_{L^p(\mathbb R^n)}.\]
When $\tau=1$â we have
\begin{align*}
\bigl\|\psi(\lambda\cdot)\bigr\|_{H^{1,p}(\mathbb R^n)}
&\lesssim\bigl\|\psi(\lambda\cdot)\bigr\|_{L^p(\mathbb R^n)}+\sum_{j=1}^n\bigl\|\partial_j(\psi(\lambda\cdot))\bigr\|_{L^p(\mathbb R^n)}\\
&=\lambda^{-n/p}\bigl\|\psi\bigr\|_{L^p(\mathbb R^n)}
+\lambda\sum_{j=1}^n\bigl\|(\partial_j\psi)(\lambda\cdot)\bigr\|_{L^p(\mathbb R^n)}\\
&\leq\max\left\{\lambda^{-n/p},\lambda^{1-n/p}\right\}\bigl\|\psi\bigr\|_{H^{1,p}(\mathbb R^n)}. \qedhere
\end{align*}
\end{proof}

\begin{proof}[Proof of Proposition \ref{cone-smoothness-with-decay}.]
We continue to use the notation $C(\alpha,\beta)$ from the proof of Proposition \ref{cone-smoothness}. Proposition \ref{cone-smoothness} tells us that $\chi_{C(0,1)}\in H^{\tau,p}(\mathbb R^n)$, and since $\left\langle\cdot\right\rangle^{-\alpha}\in C^\infty(\mathbb R^n)$, also $\left\langle\cdot\right\rangle^{-\alpha}\chi_{C(0,1)}\in H^{\tau,p}(\mathbb R^n)$.

Thus, it suffices to prove that $\left\langle\cdot\right\rangle^{-\alpha}\chi_{C(1,\infty)}\in H^{\tau,p}(\mathbb R^n)$. We split this into series
\[\sum_{j=0}^\infty\left\langle\cdot\right\rangle^{-\alpha}\chi_{C(2^j,2^{j+1})}
=\sum_{j=0}^\infty\left\langle\cdot\right\rangle^{-\alpha}\chi_{C(1,2)}\!\left(\frac\cdot{2^j}\right),\]
and each of the individual terms belongs to $H^{\tau,p}(\mathbb R^n)$, and so it only remains to prove that the series converges in $H^{\tau,p}(\mathbb R^n)$. For this purpose, let $\varphi\in C_{\mathrm c}^\infty(\mathbb R^n)$ be such that $\varphi\equiv1$ on $\mathrm{supp}\,\chi_{C(1,2)}$ and such that $\varphi$ is supported on an $\varepsilon$-neighbourhood of $\mathrm{supp}\,\chi_{C(1,2)}$ for some $\varepsilon\in\left(0,1/4\right)$, say. Now we may estimate
\begin{align*}
&\bigl\|\left\langle\cdot\right\rangle^{-\alpha}\chi_{C(1,2)}(\cdot/2^j)\bigr\|_{H^{\tau,p}(\mathbb R^n)}
=\bigl\|\left\langle\cdot\right\rangle^{-\alpha}\varphi(\cdot/2^j)\,\chi_{C(1,2)}(\cdot/2^j)\bigr\|_{H^{\tau,p}(\mathbb R^n)}\\
&\lesssim\bigl\|\left\langle\cdot\right\rangle^{-\alpha}\varphi(\cdot/2^j)\bigr\|_{C^1(\mathbb R^n)}\bigl\|\chi_{C(1,2)}(\cdot/2^j)\bigr\|_{H^{\tau,p}(\mathbb R^n)}
\lesssim 2^{-\alpha j}\cdot2^{jn/p},
\end{align*}
and since the exponent $-\alpha + n/p$ is negative, the series in question is comparable to geometric series and converges, as required.
\end{proof}

\begin{proof}[Proof of Theorem \ref{thm_cgo}]
We look for a solution of $(-\Delta+V-\lambda)u = 0$ of the form $u = e^{i\zeta \cdot x}(1+\psi)$ where $\zeta \in \mC^n$ satisfies $\zeta \cdot \zeta = \lambda$. Now $u$ will be a solution if and only if $\psi$ satisfies 
$$
(-\Delta + 2\zeta \cdot D + V)\psi = -V.
$$

We wish to use Propositions \ref{prop_cgo1} and \ref{prop_cone_pointwisemultiplier} to solve this equation. Note that Proposition \ref{prop_cone_pointwisemultiplier} has the assumption $r = \frac{2(n+1)}{n+3}$, which implies that 
$$
\frac{1}{r} - \frac{1}{r'} = \frac{2}{n+1}.
$$
This is consistent with Proposition \ref{prop_cgo1}, which requires that 
$$
\frac{1}{r} - \frac{1}{r'} \in \left[\frac{2}{n+1}, \frac{2}{n}\right).
$$
We have chosen $r$ so that $1/r-1/r'$ is as small as possible, in order to arrange the best power of $\abs{\re(\zeta)}$ in the estimates.

Note that one has 
$$
(r, \ r', \ 1/r-1/r') = \left\{ \begin{array}{cl} (6/5, \ 6, \ 2/3) & \text{if } n = 2, \\ (4/3, \ 4, \ 1/2) & \text{if }n=3. \end{array} \right.
$$
By Proposition \ref{prop_cone_pointwisemultiplier}, the function $V$ satisfies the condition in Proposition \ref{prop_cgo1} with $s$ replaced by some $t<s$, where $t > 0$ if $n=2$ and $t > 1/4$ if $n=3$. If $\abs{\re(\zeta)}$ is sufficiently large, we obtain a solution $\psi$ satisfying 
$$
\norm{\psi}_{H^{t,r'}} \leq C \abs{\re(\zeta)}^{n(1/r-1/r')-2}\norm{V}_{H^{t,r}} \leq C \abs{\re(\zeta)}^{-2/(n+1)}.
$$
We have the Sobolev embedding $H^{t,r'} \subset L^q$ where $q=\frac{nr'}{n-tr'}>r'$ (here we assume $t < n/r'$). Thus we have 
$$
\norm{\psi}_{L^q} \leq C \abs{\re(\zeta)}^{-\frac{2}{n+1}} =  C \abs{\re(\zeta)}^{-\frac{n}{q}-\delta}
$$
where $\delta = \frac{2}{n+1}-\frac{n}{q} = \frac{2}{n+1}-\frac{n-tr'}{r'} > 0$ by our assumptions on $r'$ and $t$. The result follows by taking $\rho = -i\zeta$.
\end{proof}

\section{Reduction to Laplace transform} \label{sec_reduction_laplace}

We also need to analyze further the solution $v_0$ in Proposition \ref{prop:exponential_orthogonality}. As a solution of $(-\Delta-\lambda)v_0 = 0$ in $\mR^n$, $v_0$ is real-analytic and has a Taylor series at the origin. If $v_0$ is nonzero, the Taylor series is not identically zero. Assume that the first nonvanishing term has degree $N$, and denote by $H(x)$ the homogeneous polynomial of all terms of degree $N$. Thus 
$$
v_0(x) = H(x) + R(x), \qquad \abs{R(x)} \leq C \abs{x}^{N+1},
$$
for $x$ near the origin.
The next observation is \cite[Lemma 3.4]{BPS}.

\begin{lemma} \label{lemma_h_harmonic}
If $v_0 \in B^*$ is a nontrivial solution of $(-\Delta-\lambda)v_0 = 0$, then the lowest degree nonvanishing terms $H$ in the Taylor expansion of $v_0$ form a harmonic homogeneous polynomial in $\mR^n$.
\end{lemma}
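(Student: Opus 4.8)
The plan is to combine analytic hypoellipticity of the Helmholtz operator with a comparison of Taylor coefficients by degree of homogeneity. First I would observe that $-\Delta-\lambda$ is an elliptic operator with constant (hence real-analytic) coefficients, so every distributional solution $v_0 \in B^* \subset L^2_{loc}(\mR^n)$ is in fact real-analytic in $\mR^n$; equivalently, this is the classical fact that solutions of the Helmholtz equation are real-analytic. Thus $v_0$ is represented on a neighbourhood of the origin by a power series which converges, together with all its term-by-term derivatives, uniformly on compact subsets of that neighbourhood. Grouping the monomials by total degree, write $v_0(x) = \sum_{k \geq 0} P_k(x)$ where each $P_k$ is a homogeneous polynomial of degree $k$ (possibly identically zero). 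Since $v_0$ is nontrivial and real-analytic, not all $P_k$ vanish; let $N \geq 0$ be the smallest index with $P_N \not\equiv 0$. Then $H = P_N$ is, by construction, a nonzero homogeneous polynomial of degree $N$ and $v_0(x) = H(x) + O(\abs{x}^{N+1})$ near the origin, matching the description preceding the lemma.

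It then remains to show $\Delta H = 0$. Applying $\Delta$ term by term to the power series and using $\Delta v_0 = -\lambda v_0$, we obtain the identity of convergent power series
$$
\sum_{k \geq 0} \Delta P_k = -\lambda \sum_{k \geq 0} P_k
$$
valid near the origin. For $k < N$ we have $P_k \equiv 0$, so on the left-hand side the lowest-degree contribution is $\Delta P_N$, which is a homogeneous polynomial of degree $N-2$ (to be read as $0$ if $N \leq 1$). On the right-hand side the lowest-degree contribution is $-\lambda P_N$, of degree $N$. Since two convergent power series that agree on a neighbourhood of $0$ have equal homogeneous components in every degree, matching the components of degree $N-2$ forces $\Delta P_N = 0$ whenever $N \geq 2$; and this conclusion is trivially true when $N \in \{0,1\}$ since $\deg P_N \leq 1$ in that case. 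Hence $H = P_N$ is a harmonic homogeneous polynomial, as claimed.

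I do not anticipate a serious obstacle in this argument; the only two points that deserve an explicit word are the real-analyticity of $v_0$ (analytic hypoellipticity of the constant-coefficient elliptic operator $-\Delta-\lambda$) and the legitimacy of differentiating the Taylor series term by term and then comparing homogeneous components, both of which are standard facts about convergent power series in several real variables.
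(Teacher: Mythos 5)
Your argument is correct: analyticity of solutions of $(-\Delta-\lambda)v_0=0$, expansion into homogeneous components $P_k$, and matching the degree-$(N-2)$ components of $\Delta v_0=-\lambda v_0$ gives $\Delta P_N=-\lambda P_{N-2}=0$ since $P_{N-2}\equiv 0$ below the lowest nonvanishing degree. This is essentially the same (standard) argument as in the paper's source for this statement, which is not proved in the text but cited as Lemma 3.4 of \cite{BPS}, so there is nothing to add beyond noting that your two flagged points (analytic hypoellipticity and term-by-term comparison of convergent power series) are indeed the only ingredients needed.
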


The main point in this section is the following result, proved by using the solutions of Theorem \ref{thm_cgo} in Proposition \ref{prop:exponential_orthogonality}. This result implies that whenever $\lambda$ is a non-scattering energy, then the Laplace transform of $\chi_C H$ vanishes in a certain complex manifold.

\begin{prop} \label{prop_laplace_vanishing}
Let $\lambda > 0$, let $n \in \{2, 3\}$, and let $V(x) = \chi_{C}(x) \varphi(x)$ where $\varphi \in \left\langle\cdot\right\rangle^{-\alpha}C^s(\mR^n)$ is superexponentially decaying with $\alpha>5/3$ and $s>0$ for $n=2$ (resp.\ $\alpha>9/4$ and $s>1/4$ for $n=3$), $\varphi(0)\neq0$, and $C \subset \mR^n$ is a closed circular cone opening in direction $e_n$ with angle $< \pi$ having vertex at the origin. Let $U$ be a neighborhood of $e_n$ in $S^{n-1}$ such that $e^{-\tau \omega \cdot x}$ is exponentially decaying in $C$ whenever $\tau > 0$ and $\omega \in \overline U$.

Assume that $\lambda$ is a non-scattering energy for $V$, let $v_0$ be the solution in Proposition \ref{prop:exponential_orthogonality}, and write $v_0 = H + R$ where $H$ is a harmonic homogeneous polynomial of degree $N$ as in Lemma \ref{lemma_h_harmonic}. Then 
$$
\int_{C} e^{-\rho \cdot x} H(x) \,dx = 0, \qquad \rho \in W_{0},
$$
where $W_{\lambda}$ is the set of all $\rho \in \mC^n$ of the form  
\begin{equation*} %\label{rho_parameters_one}
\rho = \rho(\tau, \omega, \omega') = \tau \omega + i(\tau^2+\lambda)^{1/2}\omega'
\end{equation*}
and where the parameters satisfy 
\begin{equation*} %\label{rho_parameters_two}
\tau > M, \quad \omega \in U, \quad \omega' \in S^{n-1} \text{ with } \omega \cdot \omega' = 0
\end{equation*}
for some sufficiently large $M$.
\end{prop}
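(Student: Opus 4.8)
The plan is to combine the orthogonality identity from Proposition~\ref{prop:exponential_orthogonality} with the complex geometrical optics solutions of Theorem~\ref{thm_cgo}, following the scheme outlined in the introduction. Fix $\rho(\tau,\omega,\omega') = \tau\omega + i(\tau^2+\lambda)^{1/2}\omega'$ with $\tau > M$, $\omega \in U$, $\omega' \in S^{n-1}$, $\omega\cdot\omega' = 0$. Then $\rho\cdot\rho = \tau^2 - (\tau^2+\lambda) = -\lambda$, so Theorem~\ref{thm_cgo} produces a solution $u = e^{-\rho\cdot x}(1+\psi)$ of $(-\Delta+V-\lambda)u = 0$ with $\norm{\psi}_{L^q} = O(\abs{\im\rho}^{-n/q-\delta})$; since $\re(-\rho\cdot x) = -\tau\,\omega\cdot x \leq 0$ on $C$ for $\omega\in U$, the exponential $e^{-\rho\cdot x}$ is bounded (in fact decaying) on $C$, and because $V$ is superexponentially decaying, $u \in e^{\gamma\br{x}}L^2$ for suitable $\gamma$, so Proposition~\ref{prop:exponential_orthogonality} applies and gives
\[
\int_{\mR^n} e^{-\rho\cdot x}(1+\psi)\,V\,v_0\,dx = 0.
\]

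Next I would write $V v_0 = \chi_C\varphi\,(H+R)$ and split the integral into the main term $\int_C e^{-\rho\cdot x}\varphi(x)H(x)\,dx$ and error terms. The error terms come from three sources: the remainder $\psi$ in the CGO solution, the Taylor remainder $R(x) = O(\abs{x}^{N+1})$ in $v_0$, and the fact that $\varphi(x) = \varphi(0) + (\varphi(x)-\varphi(0))$ rather than a constant. Each should be estimated against a negative power of $\tau \asymp \abs{\im\rho}$. The Laplace-type integral $\int_C e^{-\rho\cdot x}\abs{x}^{k}\,dx$ over the cone decays like $\tau^{-n-k}$ by homogeneity (rescaling $x \mapsto x/\tau$), so $\int_C e^{-\rho\cdot x}\varphi(0)H(x)\,dx$ is of size $\tau^{-n-N}$, while the $R$-term is $O(\tau^{-n-N-1})$ and the $(\varphi-\varphi(0))$-term is $O(\tau^{-n-N-s})$ using the $C^s$ regularity of $\br{x}^\alpha\varphi$. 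For the $\psi$-term one uses H\"older: $\abs{\int_C e^{-\rho\cdot x}\psi\,\varphi\,H\,dx} \leq \norm{\psi}_{L^q}\norm{e^{-\rho\cdot x}\varphi H}_{L^{q'}}$, and the latter norm decays in $\tau$; combined with the $O(\tau^{-n/q-\delta})$ bound on $\norm{\psi}_{L^q}$ one gets a gain of a small positive power $\beta$ over $\tau^{-n-N}$. Dividing through by $\tau^{-n-N}$ and letting $\tau\to\infty$ kills all error terms, leaving
\[
\lim_{\tau\to\infty}\tau^{n+N}\int_C e^{-\rho(\tau,\omega,\omega')\cdot x}\varphi(0)H(x)\,dx = 0.
\]

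Now I invoke homogeneity precisely: with $\sigma = \rho/\tau = \omega + i(1+\lambda/\tau^2)^{1/2}\omega'$, the substitution $x \mapsto x/\tau$ gives $\int_C e^{-\rho\cdot x}H(x)\,dx = \tau^{-n-N}\int_C e^{-\sigma\cdot x}H(x)\,dx$. As $\tau\to\infty$, $\sigma \to \omega + i\omega' =: \rho_0$, which satisfies $\rho_0\cdot\rho_0 = 0$, and the integral $\int_C e^{-\sigma\cdot x}H(x)\,dx$ depends continuously on $\sigma$ (uniformly, since $\re\sigma$ stays bounded away from directions where $e^{-\sigma\cdot x}$ grows on $C$, using that $\omega \in U$ with $\overline{U}$ as in the hypothesis). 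Hence the vanishing limit forces $\varphi(0)\int_C e^{-\rho_0\cdot x}H(x)\,dx = 0$, and since $\varphi(0)\neq 0$ we conclude $\int_C e^{-\rho\cdot x}H(x)\,dx = 0$ for every $\rho = \omega + i\omega' \in W_0$.

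The main obstacle is the careful bookkeeping in the error estimates — in particular verifying that the $\psi$-contribution genuinely beats $\tau^{-n-N}$ by a positive margin, which requires tracking the interplay between the Sobolev exponent $q$ from Theorem~\ref{thm_cgo}, the decay of $\norm{e^{-\rho\cdot x}\varphi H}_{L^{q'}}$ (where one must use both the superexponential decay of $\varphi$ away from the origin and the polynomial behavior of $H$ near it), and the homogeneity scaling; this is precisely where the quantitative hypotheses $\alpha > 5/3$, $s > 0$ (resp.\ $\alpha > 9/4$, $s > 1/4$) enter, and where one must check that the exponent $\beta$ claimed in the introduction is strictly positive. The other delicate point is ensuring the neighborhood $U$ is chosen so that $e^{-\rho_0\cdot x}$ and all the intermediate $e^{-\sigma\cdot x}$ remain integrable against the polynomial growth of $H$ on the unbounded cone $C$ — this is exactly the role of the hypothesis that $C$ is \emph{strictly} convex with opening angle $< \pi$, which guarantees such a $U$ exists.
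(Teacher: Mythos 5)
Your proposal is correct and follows essentially the same route as the paper: insert the CGO solution of Theorem~\ref{thm_cgo} into the orthogonality identity of Proposition~\ref{prop:exponential_orthogonality}, isolate the Laplace transform of $\chi_C H$, estimate the error terms (Taylor remainder $R$, the $C^s$ deviation $\varphi-\varphi(0)$, and the $\psi$-term via H\"older against the $L^q$ bound, with the far part of the cone controlled by superexponential decay), and then use homogeneity and the limit $\tau\to\infty$ to pass from $\rho\cdot\rho=-\lambda$ to $\rho\cdot\rho=0$. The bookkeeping you defer is exactly the content of the paper's Lemma~\ref{holder-lemma} and the surrounding estimates, and your identification of the gain $\beta=\delta$ from the CGO remainder is the same mechanism the paper uses.
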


%Note that if $\rho \in W_{\lambda}$, then  $(\omega, \omega') \in S U$, the unit sphere bundle of $U \subset S^{n-1}$.

Notation: All implicit constants below are allowed to depend on all the parameters except for $\rho$.

\begin{proof}
Let $\rho\in W_\lambda$. Theorem \ref{thm_cgo} guarantees the existence of a CGO solution $u=e^{-\rho\cdot x}\left(1+\psi\right)$, where $\psi$ depends on $\rho$, with 
\[\bigl\|\psi\bigr\|_{L^q(\mathbb R^n)}\lesssim\frac1{\left|\rho\right|^c},\]
where $q=2(n+1)/(n-1)$ and $c=n(n-1)/2(n+1)+\delta$ for some small $\delta\in\mathbb R_+$. To simplify notation, we assume, as we may, that $\delta<s$.
We also remark that $\left(V(x)-1\right)/(\left|x\right|^s)$ is bounded in $C$, where for simplicity we have assumed that $\varphi(0) = 1$.

We define $F(\rho)$ to be the Laplace transform 
\[
F(\rho) := \int_C e^{-\rho \cdot x} H(x) \,dx, \qquad \re(\rho) \in U.
\]
Our goal is to prove that
\begin{equation} \label{frho_claim}
F(\rho)=0, \qquad \rho \in W_0.
\end{equation}
To do this, we observe that Proposition \ref{prop:exponential_orthogonality} yields 
\[
\int_{\mR^n} V e^{-\rho \cdot x} (1+\psi)(H+R) \,dx = 0, \qquad \rho \in W_{\lambda},
\]
which can be rewritten as 
\begin{equation} \label{frho_rewritten}
F(\rho) = -\int_{C} e^{-\rho \cdot x} \big[ (V-1)H + V(R + \psi(H+R)) \big] \,dx, \ \ \rho \in W_{\lambda}.
\end{equation}
By the homogeneity of $H(x)$ and using that $C$ is a cone, the left hand side satisfies 
\[F(\rho) 
%=\left|\rho\right|^{-N-n}\int_Ce^{-\frac\rho{\left|\rho\right|}\cdot\left|\rho\right|x}\,H(x)\,\left|\rho\right|^n\, dx
=\left|\rho\right|^{-N-n} F(\rho/\abs{\rho}).\]
For the right hand side, we claim that for all $\rho \in W_{\lambda}$ one has 
\begin{equation}\label{extra-decay}%\tag{$\lesssim$}
\left\lvert \int_{C} e^{-\rho \cdot x} \big[ (V-1)H + V(R + \psi(H+R)) \big] \,dx \right\rvert \lesssim \left|\rho\right|^{-N-n-\delta}.
\end{equation}

Assuming that \eqref{extra-decay} holds, \eqref{frho_rewritten} implies 
\[F\!\left(\frac\rho{\left|\rho\right|}\right)\lesssim\left|\rho\right|^{-\delta},\]
which holds for $\rho\in W_\lambda$, and more precisely,
\[F\!\left(\frac{\tau\omega}{\sqrt{2\tau^2+\lambda}}+\frac{i\omega'\sqrt{\tau^2+\lambda}}{\sqrt{2\tau^2+\lambda}}\right)\lesssim\left(\kern-1pt\sqrt{2\tau^2+\lambda}\kern1pt\right)^{\!-\delta}\lesssim\tau^{-\delta}\]
for $\tau\gg1$, $\omega\in U$ and $\omega'\in S^{n-1}$ with $\omega'\perp\omega$. Now, taking $\tau\longrightarrow\infty$ gives
\[F\!\left(\frac\omega{\sqrt2}+\frac{i\omega'}{\sqrt2}\right)=0,\]
which holds for all $\omega\in U$ and all $\omega'\in S^{n-1}$ with $\omega\perp\omega'$. By homogeneity we have
\[F\!\left(t\omega+it\omega'\right)=0\]
for all $t\in\mathbb R_+$, $\omega\in U$ and $\omega'\in S^{n-1}$ with $\omega\perp\omega'$, which proves \eqref{frho_claim} as required.

It remains to show \eqref{extra-decay}. We shall split the left hand side of \eqref{extra-decay} into many integrals which are easier to estimate. The following is essentially Lemma 3.6 from \cite{BPS}.

\begin{lemma}\label{holder-lemma}
Let $R\colon\mathbb R^n\setminus\{0\}\longrightarrow\mathbb C$ be a continuous homogeneous function of degree $N$, and let $e^{-\mathrm{Re}(\rho) \cdot x}$ be exponentially decaying in $C$. Then, for any $f\in L^q(\mathbb R^n)$, where $q\in\left[1,\infty\right)$, we have
\[\int_Ce^{-\rho \cdot x}R(x)f(x)\,dx\lesssim\left|\rho\right|^{n/q-N-n}\bigl\|e^{-(\rho/\left|\rho\right|)\cdot x}R\bigr\|_{L^{q'}(\mathbb R^n)}\bigl\|f\bigr\|_{L^q(\mathbb R^n)}.\]
\end{lemma}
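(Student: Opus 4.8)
The plan is to use a rescaling of the integration variable together with H\"older's inequality, exploiting the homogeneity of $R$ and the cone structure of $C$. First I would substitute $x = y/\abs{\rho}$ in the integral
\[\int_C e^{-\rho \cdot x} R(x) f(x)\,dx.\]
Since $C$ is a cone with vertex at the origin, the substitution maps $C$ to itself, and $dx = \abs{\rho}^{-n}\,dy$. Using homogeneity of degree $N$, $R(y/\abs{\rho}) = \abs{\rho}^{-N} R(y)$, and $\rho \cdot x = (\rho/\abs{\rho}) \cdot y$, so the integral becomes
\[\abs{\rho}^{-N-n} \int_C e^{-(\rho/\abs{\rho}) \cdot y} R(y)\, f(y/\abs{\rho})\,dy.\]

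Next I would apply H\"older's inequality with exponents $q'$ and $q$ to the last integral, bounding it by
\[\bigl\| e^{-(\rho/\abs{\rho}) \cdot y} R(y) \bigr\|_{L^{q'}(C)} \bigl\| f(y/\abs{\rho}) \bigr\|_{L^q(\mathbb R^n)}.\]
The first factor is bounded by $\bigl\| e^{-(\rho/\abs{\rho}) \cdot y} R \bigr\|_{L^{q'}(\mathbb R^n)}$, which is finite precisely because $e^{-\mathrm{Re}(\rho) \cdot x}$ (equivalently, after normalization, $e^{-(\mathrm{Re}(\rho)/\abs{\rho}) \cdot x}$) is exponentially decaying on $C$ while $R$ has only polynomial growth there, so the product is in $L^{q'}$. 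For the second factor, the dilation identity $\bigl\| f(\,\cdot\,/\abs{\rho}) \bigr\|_{L^q(\mathbb R^n)} = \abs{\rho}^{n/q} \bigl\| f \bigr\|_{L^q(\mathbb R^n)}$ applies. Collecting the powers of $\abs{\rho}$ gives the factor $\abs{\rho}^{-N-n} \cdot \abs{\rho}^{n/q} = \abs{\rho}^{n/q - N - n}$, which is exactly the claimed bound.

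I do not expect any serious obstacle here; the argument is a clean scaling computation. The only point requiring a small remark is the finiteness of $\bigl\| e^{-(\rho/\abs{\rho}) \cdot x} R \bigr\|_{L^{q'}(\mathbb R^n)}$ (really the $L^{q'}$ norm over $C$), and that the implicit constant in $\lesssim$ should not secretly depend on $\rho$: since $\rho/\abs{\rho}$ ranges over a fixed compact set of directions (with real parts in $\overline U$, for which exponential decay on $C$ is uniform), this norm is bounded uniformly, and the constant in H\"older's inequality is $1$. Thus the estimate holds with an implicit constant independent of $\rho$, as needed for the application in \eqref{extra-decay}.
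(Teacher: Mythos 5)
Your proof is correct and is exactly the paper's argument: the paper disposes of this lemma in one line by the change of variables $x\mapsto y/\abs{\rho}$ (rescaling by $\abs{\rho}$) together with H\"older's inequality, which is precisely your computation, including the bookkeeping $\abs{\rho}^{-N-n}\cdot\abs{\rho}^{n/q}=\abs{\rho}^{n/q-N-n}$. Your added remarks on the finiteness of the $L^{q'}$ factor and the $\rho$-independence of the constant are sound but not needed beyond what the stated bound already encodes.
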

\noindent This follows immediately from the change of variables $y=x/\left|\rho\right|$ and H\"older's inequality.

First we observe that, by H\"older's inequality,
\begin{align*}
&\int_{C\setminus B(0,\varepsilon)}e^{-\rho\cdot x}\left(1+\psi(x)\right)V(x)\,v_0(x)\, dx\\
&\qquad\lesssim e^{-\varepsilon d\left|\rho\right|}\left(\bigl\|Vv_0\bigr\|_{L^1(\mathbb R^n)}
+\bigl\|\psi\bigr\|_{L^q(\mathbb R^n)}\bigl\|Vv_0\bigr\|_{L^{q'}(\mathbb R^n)}\right)
\lesssim e^{-\varepsilon d\left|\rho\right|},
\end{align*}
where $d$ is some suitably small positive real constant.
We also have
\[\int_{C\setminus B(0,\varepsilon)}e^{-\rho\cdot x}\,H(x)\, dx
\lesssim e^{-\varepsilon d\left|\rho\right|}.\]

Also, Lemma \ref{holder-lemma} gives, observing that $n(n-1)/2(n+1)-c=-\delta$, the estimates
\begin{align*}
&\int_{C\cap B(0,\varepsilon)}e^{-\rho\cdot x}
H(x)\left(V(x)-1\right) \,dx\\
&\qquad=\int_{C\cap B(0,\varepsilon)}e^{-\rho\cdot x}H(x)\left|x\right|^s\frac{V(x)-1}{\left|x\right|^s}\, dx\lesssim\left|\rho\right|^{-N-n-s},\\
&\int_{C\cap B(0,\varepsilon)}e^{-\rho\cdot x}\,\psi(x)\,V(x)\,H(x)\, dx\lesssim\left|\rho\right|^{-N-n-\delta},\\
&\int_{C\cap B(0,\varepsilon)}e^{-\rho\cdot x}\,\psi(x)\,V(x)\,O(\left|x\right|^{N+1})\, dx \lesssim \left|\rho\right|^{-N-n-1-\delta},\quad\text{and}\\
&\int_{C\cap B(0,\varepsilon)}e^{-\rho\cdot x}\,V(x)\,O(\left|x\right|^{N+1})\, dx
\lesssim\left|\rho\right|^{-N-n-1}.
\end{align*}
Combining the above estimates gives the desired claim \eqref{extra-decay}.
%\[\int_Ce^{-\rho\cdot x}\,H(x)\, dx\lesssim\left|\rho\right|^{-N-n-\delta}.\]
\end{proof}

\section{The two-dimensional case} \label{sec_two_dimensions}

We now restrict our attention to the two-dimensional case and prove Theorem \ref{thm_mainthm1}. Assume for the sake of contradiction that $\lambda > 0$ is a non-scattering energy for a potential $V$ as in Theorem \ref{thm_mainthm1}. By Proposition \ref{prop_laplace_vanishing}, this implies the vanishing of the following Laplace transform for a nonzero homogeneous harmonic polynomial $H$ of degree $N$, 
\[\int_Ce^{-(\omega+i\omega')\cdot x}\,H(x)\, dx=0\]
for all $\omega\in U$ where $U$ is an open subset of the unit circle $S^1$, and for all $\omega'\in S^1$ is such that $\omega\perp\omega'$. Since we are in two dimensions, $H$ must be of the form 
\[H(x)=a\,(x_1+ix_2)^N+b\,(x_1-ix_2)^N\]
for some constants $a$ and $b$, when $N>0$. If $N=0$, then $H(x)$ will be just a constant $a$. The goal is to prove that $a=b=0$, which will contradict the fact that $H$ is nonzero and will prove Theorem \ref{thm_mainthm1}.

First we introduce some notation. For simplicity, we assume that the cone $C$ opens in direction $e_1$ instead of $e_2$ as in Proposition \ref{prop_laplace_vanishing}. We shall write $S$ for the arc $S^1\cap C$, and $I$ for the interval $\left[-L/2,L/2\right]$ parametrizing $S$ under the mapping $r\longmapsto e^{ir}\colon\left(-\pi,\pi\right]\longrightarrow S^1$. Here $L\in\left(0,\pi\right)$ is the opening angle of the sector $C$. Now we can take $U$ to be the arc of $S^1$ corresponding, in the same coordinates, to the interval $\left(-\pi/2+L/2,\pi/2-L/2\right)$.

In polar coordinates, we have
\[\int_S\int_0^\infty e^{-(\omega+i\omega')\cdot\vartheta\,r}\,r^{N+1}\, dr\,H(\vartheta)\, d\vartheta=0.\]
We wish to rewrite the $r$-integral. Let $\alpha\in\mathbb C$ have a positive real part (we will take $\alpha = (\omega + i \omega') \cdot \vartheta$). Then, by Cauchy's integral theorem, as the integrand is exponentially decaying in the right half-plane of the complex plane, we can rotate the path of integration from the half-ray $\left\{\alpha r\,\middle|\,r\in\mathbb R_+\right\}$ to $\mathbb R_+$ giving
\[\int_0^\infty e^{-\alpha r}\left(\alpha r\right)^{N+1}\alpha\, dr
=\int_0^\infty e^{-r}\,r^{N+1}\, dr=\Gamma(N+2)=(N+1)!.\]
Thus we obtain 
\[\int_S\left(\left(\omega+i\omega'\right)\cdot\vartheta\right)^{-N-2}
H(\vartheta)\, d\vartheta=0.\]

With the parametrization $\omega=e^{i\varphi}$ and $\vartheta=e^{i\psi}$, we can compute
\begin{align*}
\left(\omega+i\omega'\right)\cdot\vartheta
&=\left[\begin{array}{c}
\cos\varphi\mp i\sin\varphi\\\sin\varphi\pm i\cos\varphi\end{array}\right]
\cdot\left[\begin{array}{c}
\cos\psi\\\sin\psi\end{array}\right]
=\left[\begin{array}{c}e^{\mp i\varphi}\\\pm ie^{\mp i\varphi}\end{array}\right]
\cdot\left[\begin{array}{c}\cos\psi\\\sin\psi\end{array}\right]\\
&=e^{\mp i\varphi}\left(\cos\psi\pm i\sin\psi\right)
=e^{\mp i\varphi}e^{\pm i\psi}
=e^{\mp i\left(\varphi-\psi\right)}
=e^{\pm i\left(\psi-\varphi\right)}.
\end{align*}
Thus, the expression involving $\omega$ and $\omega'$ factors nicely and the variables $\varphi$ and $\psi$ become separated.

In the case $N=0$ we then have
\[a\int_Se^{\mp i2\psi} dx=0,\]
or more simply
\[a\,\widehat{\chi_I}(\pm2)=0.\]
The Fourier coefficient is easy to compute and we get
\[a\sin L=0,\]
and so we must have $H(x)\equiv a=0$.
%\[\widehat{\chi_I}(\pm2)=\int_{-L/2}^{L/2}e^{\mp2i\psi} d\psi
%=\left.\frac{e^{\mp2i\psi}}{\mp2i}\right]_{-L/2}^{\psi=L/2}=\sin L.\]

When $N>0$ we get
\[\int_Se^{\mp i(N+2)\psi}\left(ae^{iN\psi}+be^{-iN\psi}\right) d\psi=0.\]
This leads to the pair of equations
\[\left\{\begin{array}{lll}
a\int_{-L/2}^{L/2}e^{-2i\psi}\, d\psi
&+b\int_{-L/2}^{L/2}e^{-i(2N+2)\psi}\, d\psi&=0,\\
a\int_{-L/2}^{L/2}e^{i(2N+2)\psi}\, d\psi
&+b\int_{-L/2}^{L/2}e^{2i\psi}\, d\psi&=0.
\end{array}\right.\]
In terms of Fourier coefficients this reads
\[\left\{\begin{array}{lll}
a\,\widehat{\chi_I}(2)&+b\,\widehat{\chi_I}(2N+2)&=0,\\
a\,\widehat{\chi_I}(2N+2)&+b\,\widehat{\chi_I}(2)&=0,
\end{array}\right.\]
where we have used the fact that $\chi_I$ is even.
This is a homogeneous linear system of equations for $a$ and $b$, and if the determinant of the coefficient matrix is nonzero, then we must have $a=b=0$. The determinant can vanish only if
\[\widehat{\chi_I}(2)=\pm\widehat{\chi_I}(2N+2).\]
The Fourier coefficients of $\chi_I$ are easy to compute, and the vanishing of the determinant simplifies to
\[\sin L=\pm\frac1{N+1}\sin\left((N+1)L\right).\]

It is now straightforward to check that this equation has no solutions $L$ in the interval $\left(0,\pi\right)$. The derivative
\[\frac{ d}{ dL}\left(\sin L\mp\frac1{N+1}\sin\left((N+1)L\right)\right)
=\cos L\mp\cos\left((N+1)L\right)\]
is clearly positive when $0<L<\pi/(N+1)$. When \[L\in\left[\pi/(N+1),N\pi/(N+1)\right],\] we clearly have
\[\sin L>\frac1{N+1}\geq\frac1{N+1}\sin\left((N+1)L\right).\]
Finally, the case where $L$ belongs to $\left(N\pi/(N+1),\pi\right)$ reduces to the case where $L$ belongs to $\left(0,\pi/(N+1)\right)$ by the change of variables $L\longmapsto\pi-L$.

\section{The three-dimensional case} \label{sec_three_dimensions}

By the same argument as in the beginning of Section \ref{sec_two_dimensions}, the proof of Theorem \ref{thm_mainthm2} reduces to showing the following result.

\begin{lemma} \label{lemma_sphericalharmonic_condition}
Let $n = 3$, and let $S_{\gamma} = \{ x \in S^{n-1} \,;\, x_n > \cos \gamma \}$ be a spherical cap where $0 < \gamma < \pi/2$ . There is a countable subset $E \subset (0,\pi/2)$ such that for any $\gamma \in (0,\pi/2) \setminus E$, the condition 
$$
\int_{S_{\gamma}} ((e_n + i\eta) \cdot x)^{-N-n} H(x) \,dx = 0, \qquad \eta \in S^{n-1}, \ \ \eta \cdot e_n = 0,
$$
implies that $H \equiv 0$ whenever $H$ is a spherical harmonic on $S^{n-1}$ of degree $N$.
\end{lemma}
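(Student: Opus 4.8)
The plan is to diagonalise the hypothesis by a Fourier decomposition in the parameter describing $\eta$, reducing the lemma to the statement that each member of a one-parameter family of scalar integrals $I_{N,m}(\gamma)$ is real-analytic in $\gamma$ and not identically zero; the set $E$ is then the union of their (discrete) zero sets. Concretely, I would parametrise $S^2$ by $x=(\sin\theta\cos\phi,\sin\theta\sin\phi,\cos\theta)$, so that $S_\gamma=\{\theta<\gamma\}$ and the surface measure is $\sin\theta\,d\theta\,d\phi$, and use the basis $\{P_N^{|m|}(\cos\theta)e^{im\phi}\}_{m=-N}^{N}$ of the degree-$N$ spherical harmonics, writing $H=\sum_{m=-N}^N b_m\,P_N^{|m|}(\cos\theta)e^{im\phi}$. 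Every admissible $\eta$ equals $\eta_\psi=(\cos\psi,\sin\psi,0)$ for some $\psi$, and then $(e_3+i\eta_\psi)\cdot x=\cos\theta+i\sin\theta\cos(\phi-\psi)$. Substituting this into the hypothesis and performing the rotation $\phi\mapsto\phi+\psi$ to pull $\psi$ out of the kernel, the hypothesis becomes
\[
\sum_{m=-N}^N b_m\,e^{im\psi}\,I_{N,m}(\gamma)=0\qquad\text{for all }\psi,
\]
where, with $\Phi_{N,m}(\theta)=\int_0^{2\pi}(\cos\theta+i\sin\theta\cos\phi)^{-N-3}e^{im\phi}\,d\phi$, we set
\[
I_{N,m}(\gamma)=\int_0^\gamma \Phi_{N,m}(\theta)\,P_N^{|m|}(\cos\theta)\,\sin\theta\,d\theta .
\]
A trigonometric polynomial of degree $N$ that vanishes identically has all of its coefficients zero --- equivalently, evaluating at $2N+1$ distinct values of $\psi$ gives a homogeneous system whose matrix is a nonsingular Vandermonde matrix --- so $b_m\,I_{N,m}(\gamma)=0$ for every $m$. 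Hence $H\equiv 0$ as soon as $I_{N,m}(\gamma)\ne 0$ for all $m$ with $|m|\le N$.

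\emph{Real-analyticity and non-vanishing of $I_{N,m}$.} On $S_\gamma$ one has $\re(\cos\theta+i\sin\theta\cos\phi)=\cos\theta>0$, so the integrand defining $\Phi_{N,m}$ is smooth, bounded, and holomorphic in $\theta$ in a complex neighbourhood of every point of $(0,\pi/2)$; differentiation under the integral sign then shows $\Phi_{N,m}$ is real-analytic on $(0,\pi/2)$, hence so is its primitive-type expression $I_{N,m}$. (One can also evaluate $\Phi_{N,m}$ explicitly by residues, as a rational function of $\tan(\theta/2)$, but this is not needed.) To see $I_{N,m}\not\equiv 0$ I would examine the limit $\gamma\to0^+$: writing $\cos\theta+i\sin\theta\cos\phi=1+z$ with $z=O(\theta)$ uniformly in $\phi$ and Taylor-expanding $(1+z)^{-N-3}$, the only term surviving the $\phi$-integration at the lowest order in $\theta$ is the one carrying $z^{|m|}$, because $\int_0^{2\pi}\cos^j\phi\,e^{im\phi}\,d\phi=0$ for $j<|m|$; this gives $\Phi_{N,m}(\theta)=\binom{-N-3}{|m|}i^{|m|}\,\frac{2\pi}{2^{|m|}}\,\theta^{|m|}+O(\theta^{|m|+1})$ with nonzero leading coefficient. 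Since moreover $P_N^{|m|}(\cos\theta)\sim(\text{nonzero const})\,\theta^{|m|}$ and $\sin\theta\sim\theta$ as $\theta\to0^+$, the integrand of $I_{N,m}$ equals $C\,\theta^{2|m|+1}+O(\theta^{2|m|+2})$ with $C\ne 0$, whence $I_{N,m}(\gamma)\sim\frac{C}{2|m|+2}\,\gamma^{2|m|+2}\ne 0$ for all small $\gamma>0$; in particular $I_{N,m}\not\equiv 0$.

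\emph{Conclusion and main obstacle.} A real-analytic function on the connected interval $(0,\pi/2)$ that is not identically zero has a discrete, hence at most countable, zero set, so it suffices to take
\[
E=\bigcup_{N=0}^\infty\ \bigcup_{m=-N}^{N}\bigl\{\gamma\in(0,\pi/2)\ :\ I_{N,m}(\gamma)=0\bigr\},
\]
which is a countable union of countable sets, hence countable. For $\gamma\in(0,\pi/2)\setminus E$ the hypothesis forces $b_m\,I_{N,m}(\gamma)=0$ with $I_{N,m}(\gamma)\ne 0$, so $b_m=0$ for all $m$, i.e.\ $H\equiv 0$. The genuinely delicate point --- and the reason $E$ is only countable rather than, say, empty --- is establishing $I_{N,m}\not\equiv 0$: the Vandermonde structure merely reduces the lemma to this, and one does not control the location of the zeros of $I_{N,m}$, only that there are at most countably many. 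The small-angle asymptotics above, with its explicit nonzero leading coefficient, is what closes that gap.
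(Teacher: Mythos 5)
Your proof is correct and follows essentially the same route as the paper: the expansion of $H$ in the basis $P_N^{|m|}(\cos\theta)e^{im\phi}$, the diagonalization in the direction of $\eta$ (your Fourier-in-$\psi$ step is the paper's Vandermonde determinant over rotations $R_k$), and the reduction to analyticity plus non-vanishing of the scalar functions $I_{N,m}$, whose zero sets form the countable exceptional set $E$. Your small-$\gamma$ asymptotics of $I_{N,m}$ rests on the same resonance computation (the $\cos^{|m|}\phi$ term pairing with $e^{im\phi}$, with leading coefficient a nonzero multiple of $\binom{-N-3}{|m|}$) that the paper performs by differentiating the inner $\beta$-integral $|m|$ times at $\gamma=0$.
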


To prepare for the proof, write $x = ((\sin \alpha) \omega', \cos \alpha)$ where $\omega' \in S^{n-2}$. Writing also $\eta = (\eta',0)$ where $\eta' \in S^{n-2}$, the integral becomes 
\begin{multline*}
\int_{S_{\gamma}} ((e_n + i\eta) \cdot x)^{-N-n} H(x) \,dx \\
 = \int_0^{\gamma} \int_{S^{n-2}} (\cos \alpha + i (\sin \alpha) \eta' \cdot \omega')^{-N-n} H((\sin \alpha) \omega', \cos \alpha) \sin^{n-2} \alpha \,d\omega' \,d\alpha.
\end{multline*}
Let $\{ Y_1^N, \ldots, Y_r^N \}$ be some basis of spherical harmonics of degree $N$ where $r = r_N$, and write $H = \sum_{j=1}^r a_j Y_j^N$. It is convenient to rephrase this in terms of rotation matrices: we write $\eta' = R e_1$ where $R$ is a rotation matrix, that is, $R \in SO(n-1)$. The condition in Lemma \ref{lemma_sphericalharmonic_condition} then becomes 
\begin{equation} \label{aj_fjn_condition}
\sum_{j=1}^r a_j f_j^N(\gamma; R) = 0, \qquad R \in SO(n-1),
\end{equation}
where 
\begin{multline*}
f_j^N(\gamma; R) := \\
 \int_0^{\gamma} \int_{S^{n-2}} (\cos \alpha + i (\sin \alpha) \omega'_1)^{-N-n} Y_j^N((\sin \alpha) R \omega', \cos \alpha) \sin^{n-2} \alpha \,d\omega' \,d\alpha.
\end{multline*}
Here we have changed variables $\omega' \mapsto R \omega'$ in the integral (note that this uses the fact that $S_{\gamma}$ is a spherical cap).

The next result together with an analyticity argument will imply Lemma \ref{lemma_sphericalharmonic_condition}.

\begin{lemma} \label{lemma_notidenticallyzero}
Assume that $n=3$. For any $N \geq 0$, there exists a basis $\{ Y_1^N, \ldots, Y_r^N \}$ of spherical harmonics of degree $N$ and there exist rotation matrices $R_1, \ldots, R_r \in SO(n-1)$ such that the function 
$$
g^N: (0,\pi/2) \to \mC, \ \ g^N(\gamma) := \det\left[ (f_j(\gamma; R_k))_{j,k=1}^r \right]
$$
is not identically zero.
\end{lemma}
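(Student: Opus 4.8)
The plan is to use the standard complex spherical harmonics as the basis and rotations about the axis $e_n$ as the matrices $R_k$; this separates variables in the integrals $f_j^N(\gamma;R_k)$ and exhibits $g^N$ as a Vandermonde determinant times a product of one-variable integrals, after which it suffices to show that none of those one-variable integrals vanishes identically.

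Concretely, since $n=3$ we have $S^{n-2}=S^1$, and writing a point of $S^2$ in colatitude/longitude coordinates as $\bigl((\sin\alpha)(\cos\beta,\sin\beta),\cos\alpha\bigr)$, the degree-$N$ spherical harmonics are spanned by $Y_N^m = c_{N,m}\,P_N^{\lvert m\rvert}(\cos\alpha)\,e^{im\beta}$ with $c_{N,m}\neq0$, $m=-N,\dots,N$, so $r=2N+1$. A rotation $R_\theta\in SO(2)=SO(n-1)$ acts on the longitude by $\beta\mapsto\beta+\theta$, hence $Y_N^m((\sin\alpha)R_\theta\omega',\cos\alpha)=e^{im\theta}\,Y_N^m((\sin\alpha)\omega',\cos\alpha)$, so that
\[
f_m^N(\gamma;R_\theta)=e^{im\theta}\,f_m^N(\gamma),\qquad f_m^N(\gamma):=f_m^N(\gamma;\mathrm{Id}).
\]
Choosing $\theta_1,\dots,\theta_{2N+1}$ distinct in $(0,\pi)$ and $R_k=R_{\theta_k}$, multilinearity in the rows gives
\[
g^N(\gamma)=\Bigl(\prod_{m=-N}^{N}f_m^N(\gamma)\Bigr)\,\det\bigl[(e^{im\theta_k})_{m,k}\bigr],
\]
and pulling $e^{-iN\theta_k}$ out of the $k$-th column turns the remaining determinant into the Vandermonde determinant of the distinct points $e^{i\theta_k}$, which is nonzero. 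Since each $f_m^N$ is analytic on the connected interval $(0,\pi/2)$, it follows that $g^N\not\equiv0$ provided each $f_m^N\not\equiv0$.

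To prove $f_m^N\not\equiv0$ I would analyze the behaviour as $\gamma\to0^+$. The $\alpha$-integrand of $f_m^N(\gamma)$ is
\[
F(\alpha)=c_{N,m}\,P_N^{\lvert m\rvert}(\cos\alpha)\,\sin\alpha\int_0^{2\pi}\bigl(\cos\alpha+i(\sin\alpha)\cos\beta\bigr)^{-N-3}e^{im\beta}\,d\beta .
\]
Standard facts about associated Legendre functions near $x=1$ give $P_N^{\lvert m\rvert}(\cos\alpha)=c_1(\sin\alpha)^{\lvert m\rvert}\bigl(1+O(\alpha^2)\bigr)$ with $c_1=\pm\tfrac{(N+\lvert m\rvert)!}{2^{\lvert m\rvert}\lvert m\rvert!(N-\lvert m\rvert)!}\neq0$ for $\lvert m\rvert\le N$. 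Expanding $\bigl(\cos\alpha+i(\sin\alpha)\cos\beta\bigr)^{-N-3}$ in powers of $\alpha$, the coefficient of $\alpha^l$ is a polynomial in $\cos\beta$ of degree $\le l$ whose degree-$l$ part is $\binom{-N-3}{l}(i\cos\beta)^l$; since $\int_0^{2\pi}\cos^k\beta\,e^{im\beta}\,d\beta=0$ for $0\le k<\lvert m\rvert$ and equals $2\pi\,2^{-\lvert m\rvert}$ for $k=\lvert m\rvert$, the $\beta$-integral equals $c_2\,\alpha^{\lvert m\rvert}\bigl(1+O(\alpha)\bigr)$ with $c_2=\binom{-N-3}{\lvert m\rvert}i^{\lvert m\rvert}2\pi\,2^{-\lvert m\rvert}\neq0$. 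Hence $F(\alpha)=c_{N,m}c_1c_2\,\alpha^{2\lvert m\rvert+1}\bigl(1+O(\alpha)\bigr)$ near $0$ with nonzero leading coefficient, so
\[
f_m^N(\gamma)=\frac{c_{N,m}c_1c_2}{2\lvert m\rvert+2}\,\gamma^{2\lvert m\rvert+2}\bigl(1+O(\gamma)\bigr),
\]
which is nonzero for small $\gamma>0$. Therefore $g^N(\gamma)$ equals a nonzero constant times $\gamma^{\sum_m(2\lvert m\rvert+2)}\bigl(1+O(\gamma)\bigr)$ near $\gamma=0$, and in particular $g^N\not\equiv0$.

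The only technical point is the bookkeeping in the $\gamma\to0^+$ expansion — specifically verifying that the leading coefficients $c_1$ and $c_2$ (hence the whole leading coefficient) are nonzero; everything else is formal manipulation of determinants. I would also stress that the argument is genuinely special to $n=3$: it relies on $SO(n-1)$ acting on the single longitude variable of $S^{n-2}=S^1$ by translation, which is exactly what produces the separation of variables and the Vandermonde structure, and this device is unavailable once $S^{n-2}$ is a higher-dimensional sphere.
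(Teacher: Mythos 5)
Your proposal is correct and takes essentially the same route as the paper: the same explicit basis $P_N^{\lvert m\rvert}(\cos\alpha)e^{im\beta}$, the same axis rotations producing the separation of variables and the Vandermonde factor, and a reduction to showing each one-variable integral $f_m\not\equiv 0$ by a computation at $\gamma=0$. The only difference is in that last check and it is minor: the paper differentiates the inner $\beta$-integral $\lvert m\rvert$ times at $\gamma=0$ (factoring out $P_N^{\lvert m\rvert}(\cos\gamma)\sin\gamma$ and invoking analyticity), while you extract the leading $\gamma^{2\lvert m\rvert+2}$ asymptotics of $f_m$ itself using $P_N^{\lvert m\rvert}(\cos\alpha)\sim c\,(\sin\alpha)^{\lvert m\rvert}$ --- both arguments rest on the same resonance of $\cos^{\lvert m\rvert}\beta$ with $e^{im\beta}$, and yours even yields nonvanishing of $g^N$ for all sufficiently small $\gamma>0$.
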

\begin{proof}
Assume that $n=3$. In this case there is an explicit basis of spherical harmonics of degree $N$ given by 
$$
Y_j^N((\sin \alpha) \omega', \cos \alpha) = P_N^{\abs{j}}(\cos \alpha) e^{ij \beta}, \quad -N \leq j \leq N
$$
where $P_N^m$ are associated Legendre polynomials and $\omega' = (\cos \beta, \sin \beta)$. (As is customary, we index the basis by $-N \leq j \leq N$ instead of $1 \leq j \leq 2N+1$.) Let $R_k$ be the rotation in $S^1$ by angle $\theta_k$. Then 
$$
Y_j^N((\sin \alpha) R_k \omega', \cos \alpha) = e^{ij \theta_k} Y_j^N((\sin \alpha) \omega', \cos \alpha).
$$
This implies that 
$$
f_j^N(\gamma; R_k) = e^{ij \theta_k} f_j(\gamma)
$$
where $f_j(\gamma) := f_j^N(\gamma;\mathrm{Id})$, and 
$$
g^N(\gamma) = f_1(\gamma) \cdots f_r(\gamma) \det\left[ (e^{ij\theta_k})_{j,k=-N}^N \right].
$$
The last determinant is of Vandermonde type. We choose the rotations so that $e^{i \theta_k} \neq e^{i\theta_l}$ for $k \neq l$, and then the last determinant is nonzero.

To show that $g^N(\gamma)$ is not identically zero, we need to demonstrate that there is some $\gamma \in (0,\pi/2)$ such that the product $f_1(\gamma) \cdots f_r(\gamma)$ is nonzero. We first prove that none of the functions $f_j$ is identically zero in $(0,\pi/2)$. Now 
\begin{multline*}
f_j(\gamma) = \\
 \int_0^{\gamma} \int_{S^1} (\cos \alpha + i (\sin \alpha) \omega'_1)^{-N-n} Y_j^N((\sin \alpha) \omega', \cos \alpha) \sin \alpha \,d\omega' \,d\alpha.
\end{multline*}
Each $f_j$ extends analytically near $\left[0,\pi/2\right)$, and its derivative satisfies  
$$
f_j'(\gamma) = \sin \gamma \int_{S^1} (\cos \gamma + i (\sin \gamma) \omega'_1)^{-N-n} Y_j^N((\sin \gamma) \omega', \cos \gamma) \,d\omega'.
$$
Inserting the explicit form for $Y_j^N$ we get 
$$
f_j'(\gamma) = P_N^{\abs{j}}(\cos \gamma) \sin \gamma \int_0^{2\pi} (\cos \gamma + i \sin \gamma \cos \beta)^{-N-n} e^{ij \beta} \,d\beta.
$$
It is enough to show that the function $\gamma \mapsto \int_0^{2\pi} \cdots \,d\beta$ is not identically zero. For $j = 0$ this follows just by taking $\gamma=0$, and for $j \neq 0$ the result follows by differentiating $\abs{j}$ times with respect to $\gamma$ and taking $\gamma=0$. More precisely, writing $p=\cos\gamma+i\sin\gamma\cos\beta$ and $p'= dp/ d\gamma=-\sin\gamma+i\cos\gamma\cos\beta$, the $\left|j\right|$th derivative of the integral has the form
\[\int_0^{2\pi}\frac{\nu_0(p')^{\left|j\right|}+\nu_1p(p')^{\left|j\right|-1}+\ldots+\nu_{\left|j\right|}p^{\left|j\right|}}{p^{N+n+\left|j\right|}}\,e^{ij\beta}\, d\beta,\]
for some constants $\nu_0,\nu_1,\ldots,\nu_{\left|j\right|}\in\mathbb C$, and in particular, the coefficient $\nu_0$ is
\[\nu_0=\pm(N+n)(N+n+1)\cdots(N+n+\left|j\right|-1)\neq0.\]
At $\gamma=0$, we have $p=1$ and $p'=i\cos\beta$, and the integral simplifies to
\[\int_0^{2\pi}\left(\nu_0'\cos^{\left|j\right|}\beta+\nu_1'\cos^{\left|j\right|-1}\beta+\ldots+\nu_{\left|j\right|}'\right)e^{ij\beta}\, d\beta,\]
where the coefficients $\nu_0'$, $\nu_1'$, \dots are the same coefficients as before except for the obvious powers of $i$.
Writing the cosines in terms of exponentials, there will be exactly one term which resonates with $e^{ij\beta}$, namely the exponential $e^{-ij\beta}$ coming from $\cos^{\left|j\right|}\beta$, and its coefficient is nonzero. Thus the $\left|j\right|$th derivative of $\int_0^{2\pi}\ldots d\beta$ at $\gamma=0$ is nonzero, and as an analytic function of $\gamma$, the integral can not be identically zero.

We have proved that each $f_j$ is not identically zero, and since $f_j$ extends analytically near $[0,\pi/2)$ it is nonvanishing in $(0,\pi/2) \setminus E_j$ for some countable discrete subset $E_j \subset (0,\pi/2)$. Then $f_1 \cdots f_r$ is nonvanishing in $(0,\pi/2) \setminus E$ where $E = \cup_{j=1}^r E_j$ is a countable set.
\end{proof}

\begin{proof}[Proof of Lemma \ref{lemma_sphericalharmonic_condition}]
Each function $\gamma \mapsto f_j^N(\gamma,R)$ extends as an analytic function in some neighborhood of the interval $[0,\pi/2)$ in the complex plane, and the same is true for the functions $g^N$ in Lemma \ref{lemma_notidenticallyzero}. For each $N$, by Lemma \ref{lemma_notidenticallyzero} we can choose $Y_j^N$ and $R_j$ such that $g^N$ is analytic in some neighborhood $U_N$ of $[0,\pi/2)$ and $g^N|_{(0,\pi/2)}$ is not identically zero. By analyticity the set $E_N = \{ z \in U_N \,;\, g^N(z) = 0 \}$ is countable and discrete in $U_N$.

Define 
$$
E = \bigcup_{N=0}^{\infty} (E_N \cap (0,\pi/2)).
$$
Then $E$ is a countable subset of $(0,\pi/2)$, and each $g^N$ is nonvanishing in $(0,\pi/2) \setminus E$.

Assume now that $\gamma \in (0,\pi/2) \setminus E$, let $N \geq 0$, and let $H$ be a spherical harmonic of degree $N$ such that the condition in Lemma \ref{lemma_sphericalharmonic_condition} holds. Writing $H = \sum_{j=1}^r a_j Y_j^N$ where $Y_j^N$ and $R_j$ were chosen above, by \eqref{aj_fjn_condition} we have 
$$
\sum_{j=1}^r a_j f_j^N(\gamma; R_k) = 0, \qquad k = 1, \ldots, r.
$$
But $g^N(\gamma) \neq 0$ so the matrix $(f_j^N(\gamma; R_k))_{j,k=1}^r$ is invertible, which implies that $a_j = 0$ for $j=1, \ldots, r$. This proves that $H \equiv 0$.
\end{proof}

\bibliographystyle{alpha}

\end{document}